\newtheorem{thm}{Theorem}[section]
\newtheorem{lem}[thm]{Lemma}
\newtheorem{cor}[thm]{Corollary}
\newtheorem{prop}[thm]{Proposition}
\newtheorem{prob}[thm]{Problem}
\newtheorem{claim}[thm]{Claim}
\theoremstyle{definition}
\newtheorem{defin}[thm]{Definition}
\newtheorem{eg}[thm]{Example}
\newtheorem*{conventions}{Conventions}
\newtheorem*{notation}{Notation}
\newtheorem*{acknowledgment}{Acknowledgment}
\theoremstyle{remark}
\newtheorem{rem}[thm]{Remark}
\numberwithin{equation}{section}
\newcommand{\bA}{\mathbb{A}}
\newcommand{\bC}{\mathbb{C}}
\newcommand{\bF}{\mathbb{F}}
\newcommand{\bk}{\Bbbk}
\newcommand{\kc}{\overline{\Bbbk}}
\newcommand{\sO}{\mathscr{O}}
\newcommand{\bP}{\mathbb{P}}
\newcommand{\bQ}{\mathbb{Q}}
\newcommand{\bR}{\mathbb{R}}
\newcommand{\bZ}{\mathbb{Z}}
\newcommand{\cont}{\mathrm{cont}}
\newcommand{\Cl}{\mathrm{Cl}}
\newcommand{\Gal}{\mathrm{Gal}(\overline{\Bbbk}/{\Bbbk})}
\newcommand{\Pic}{\mathrm{Pic}}
\newcommand{\Proj}{\mathrm{Proj}}
\newcommand{\red}{\mathrm{red}}
\newcommand{\Sing}{\mathrm{Sing}}
\newcommand{\Spec}{\mathrm{Spec}}
\newcommand{\Supp}{\mathrm{Supp}}
\newcommand{\wV}{\widetilde{V}}
\newcommand{\cV}{\check{V}}
\newcommand{\wD}{\widetilde{D}}
\newcommand{\cD}{\check{D}}
\newcommand{\wS}{\widetilde{S}}
\newcommand{\hS}{\hat{S}}
\newcommand{\cS}{\check{S}}
\newcommand{\wDelta}{\widetilde{\Delta}}
\newcommand{\hDelta}{\hat{\Delta}}
\newcommand{\cDelta}{\check{\Delta}}
\newcommand{\wC}{\widetilde{C}}
\newcommand{\hC}{\hat{C}}
\newcommand{\cC}{\check{C}}
\newcommand{\wE}{\widetilde{E}}
\newcommand{\hE}{\hat{E}}
\newcommand{\cE}{\check{E}}
\newcommand{\wGamma}{\widetilde{\Gamma}}
\begin{document}

\title[Compactifications of $\mathbb{A}^2$ over non-closed fields]{Compactifications of the affine plane over non-closed fields}


\author{}
\address{}
\curraddr{}
\email{}
\thanks{}

\author{Masatomo Sawahara}
\address{Graduate School of Science and Engineering, Saitama University, Shimo-Okubo 255, Sakura-ku Saitama-shi,  Saitama 338-8570, JAPAN}
\curraddr{}
\email{sawahara.masatomo@gmail.com}
\thanks{}

\subjclass[2020]{14J17, 14J26, 14J45, 14R10. }

\keywords{normal del Pezzo surface, compactification of the affine plane, algebraically non-closed field. }

\date{}

\dedicatory{}

\begin{abstract}
In this article, we give the classification of normal del Pezzo surfaces of rank one with at most log canonical singularities containing the affine plane defined over an algebraically non-closed field of characteristic zero. 
As an application, we have a criterion for del Pezzo fibrations with canonical singularities whose generic fibers are not smooth to contain vertical $\bA ^2$-cylinders. 
\end{abstract}

\maketitle
\setcounter{tocdepth}{1}
Unless otherwise stated, let $\bk$ be a field of characteristic zero (not necessarily an algebraically closed field) and let $\kc$ be the algebraic closure of $\bk$. 
\section{Introduction}\label{1}
\subsection{Cylinders in MFS}
Let $X$ be an algebraic variety over $\bk$. 
An open subset $U$ of $X$ is called an {\it $\bA ^r_{\bk}$-cylinder} if $U$ is isomorphic to $\bA ^r_{\bk} \times Z$ for some variety $Z$ over $\bk$. 
When the rank $r$ of cylinder $U$ is not important, $U$ is just said to be a {\it cylinder} in what follows. 
Certainly, cylinders are geometrically simple objects, however, they receive a lot of attention recently from the viewpoint of unipotent group actions on affine cones over polarized varieties ({\cite{KPZ11}}). 
Thus, it is important to find a cylinder in projective varieties. 
According to {\cite{DK19}}, this attempt is essentially reduced to finding a cylinder contained in Mori Fiber Space (MFS, for short). 
In particular, as a special and an ideal situation, we shall consider finding a {\it vertical $\bA ^r_{\bC}$-cylinder} with respect to MFS $f:X \to Y$ defined over the complex number field $\bC$ (see {\cite{DK18}} for definition). 
Notice that the existence of a vertical $\bA ^r_{\bC}$-cylinder with respect to $f$ is equivalent to saying that the generic fiber $X_{\eta}$ of $f$, which is a Fano variety of rank one defined over the field $\bC (Y)$ of the rational functions on $Y$, contains an $\bA ^r_{\bC (Y)}$-cylinder (see {\cite[Lemma 3]{DK18}}). 
In particular, the dimension of $X_\eta$ over $\bC (Y)$ is less than that of $X$ unless $X$ is a rank one Fano variety. 
However, the base field $\bC (Y)$ is not algebraically closed unless $Y$ is a point. 
Hence, in order to find a vertical cylinder with respect to MFS, the following problem is essential to consider: 
\begin{prob}\label{prob}
Let $V$ be a normal Fano variety of rank one over a field $\bk$ of characteristic zero, and let $r$ be an integer with $1 \le r \le \dim V$. 
In which case does $V$ contain an $\bA ^r_{\bk}$-cylinder? 
\end{prob}
Let $V$ be a Fano variety of rank one over $\bk$. 
In the case of $\dim V =1$, the above problem is quite easy. 
Indeed, $V$ contains an $\bA ^1_{\bk}$-cylinder if and only if $V$ has a $\bk$-rational point. 
In case of $\dim V=2$, then $V$ is a del Pezzo surface. 
If $V$ is smooth, Dubouloz and Kishimoto showed that $V$ contains an $\bA ^1_{\bk}$-cylinder (resp. an $\bA ^2_{\bk}$-cylinder) if and only if $V$ has a $\bk$-rational point and is of degree greater than or equal to $5$ (resp. $8$) (see {\cite{DK18}}). 
Notice that any $\bA ^2_{\bk}$-cylinder in a projective surface over $\bk$ is clearly isomorphic to the affine plane $\bA ^2_{\bk}$. 
Recently, the author generalizes their work to normal del Pezzo surfaces. 
More precisely, the existing condition of $\bA ^1_{\bk}$-cylinders in a normal del Pezzo surface $V$ with at most canonical singularities is completely determined only by means of the degree and singular type of $V$ (see {\cite{Sw}}). 
Hence, as a next target, it seems reasonable to consider Problem \ref{prob} for the case in which $V$ is a normal del Pezzo surface with singularities worse than canonical singularities. 
In this article, we will give a partial answer to this problem by classifying normal del Pezzo surfaces of rank one with at most log canonical singularities containing the affine plane as a special kind of cylinder. 
In other words, we shall classify compactifications of the affine plane into normal del Pezzo surfaces of rank one defined over an algebraically non-closed field with at most log canonical singularities. 
We review compactifications of $\bA ^2$ in the next subsection. 
\subsection{Compactifications of the affine plane}
Let $V$ be a normal projective surface over $\bk$ and let $D$ be a reduced effective divisor on $V$. 
Then we say that the pair $(V,D)$ is a {\it compactification} of the affine plane $\bA ^2_{\bk}$ if $V \backslash \Supp (D) \simeq \bA ^2_{\bk}$, moreover, $D$ is called the {\it boundary divisor}. 
It is known that the pair $(V,D)$ is a compactification of $\bA ^2_{\bk}$ if and only if $(V_{\kc},D_{\kc})$ is a compactification of $\bA ^2_{\kc}$ because of the fact that there is no non-trivial $\bk$-form of $\bA_{\kc}^2$ (see \cite{Ka75}). 
Now, assume that $V$ is of rank one, $V_{\kc}$ has at most log canonical singularities and $(V,D)$ is a compactification of $\bA ^2_{\bk}$. 
Kojima and Takahashi showed that $V$ is a numerical del Pezzo surface with at most rational singularities when $\bk = \kc$ (see {\cite{KT09}}). 
Notice that their argument works verbatim even if $\bk \not= \kc$. 
Furthermore, when $\bk$ is algebraically closed, the weighted dual graph of the boundary divisor $D$ is classified, and the list is summarized in {\cite{MZ88,Ko01,KT09}}. 
The goal of the article lies in a generalization of their works to the case where the base field is of characteristic zero without assuming that it is algebraically closed. 
\subsection{Main results}\label{1-3}
In order to state our main result, we need to clarify the meaning of singular del Pezzo surfaces defined over an algebraically non-closed field. 
Let $S$ be a normal projective surface over $\bk$ and let $S_{\kc}$ be the base extension of $S$ to the algebraic closure $\kc$, i.e., $S_{\kc} = S \times _{\Spec (\bk )} \Spec (\kc )$. 
Then $S$ is called a {\it numerical del Pezzo surface} over $\bk$ if $-K_{S_{\kc}}$ is numerically ample, i.e., $(-K_{S_{\kc}})^2>0$ and $(-K_{S_{\kc}} \cdot C)>0$ for any closed curve $C$ on $S_{\kc}$, where these intersection numbers are in the sense of {\it Mumford's rational intersection number} (see, e.g., {\cite{Mu61,Sk84}}) since $S_{\kc}$ is not necessarily $\bQ$-factorial. 
However, note that any numerical del Pezzo surface $S_{\kc}$ is $\bQ$-factorial if $S$ is of rank one and contains the affine plane $\bA ^2_{\bk}$ (cf.\ {\cite[Theorem 1.1(2)]{KT09}}), in particular, we will mainly treat such numerical del Pezzo surfaces. 
We say that a numerical del Pezzo surface $S$ over $\bk$ is an {\it lc} (resp. a {\it log}, a {\it Du Val}) {\it del Pezzo surface} if the base extension $S_{\kc}$ has at most log canonical singularities (resp. quotient singularities, Du Val singularities). 
Finally, a numerical del Pezzo surface is said to have rank one if the rank of its Neron-Severi group equals one. 

Then our first main result states as follows (for some definitions, see Conventions and Notation): 
\begin{thm}\label{main}
Let $(S,\Delta )$ be a compactification of the affine plane over $\bk$. 
Assume that $S$ is a numerical del Pezzo surface of rank one over $\bk$ such that $\Sing (S_{\kc}) \not= \emptyset$. 
Let $\sigma :\wS \to S$ be the minimal resolution over $\bk$ and let $\wDelta$ be the reduced effective divisor on $\wS$ defined by $\wDelta := \sigma ^{\ast}(\Delta )_{\red}$. 
Then we have the following: 
\begin{enumerate}
\item $\wDelta _{\kc}$ is an SNC-divisor. 
\item The following three assertions about the number of singularities hold: 
\begin{enumerate}
\renewcommand{\labelenumii}{(\roman{enumii})}
\item $\sharp {\rm Sing}(S) \ge 1$. In other words, $S_{\kc}$ has a singular point, which is $\bk$-rational. 
\item $\sharp {\rm Sing}(S) \le 2$. Moreover, $\sharp {\rm Sing}(S)=2$ if and only if $\sharp {\rm Sing}(S_{\kc}) = 2$. 
\item $\sharp {\rm Sing}(S_{\kc}) = 1$ or $\rho _{\kc}(S_{\kc})+1$. 
\end{enumerate}
\item Assume further that $S$ is an lc del Pezzo surface. If $\rho _{\kc}(S_{\kc}) >1$, the weighted dual graph of $\wDelta _{\kc}$ is one of the graphs {\rm (1)}--{\rm (52)} in Appendix \ref{9}. 
\end{enumerate}
\end{thm}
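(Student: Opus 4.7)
The plan is to reduce to the algebraically closed case by base change to $\kc$ and then descend to $\bk$ using the Galois action together with the rank-one hypothesis. Base-changing $\sigma:\wS \to S$ to $\kc$ yields a minimal resolution $\sigma_{\kc}:\wS_{\kc} \to S_{\kc}$ of the numerical del Pezzo surface $S_{\kc}$ (since minimal resolutions in characteristic zero commute with separable base change), and $(S_{\kc},\Delta_{\kc})$ is a compactification of $\bA^2_{\kc}$ whose weighted dual graph $\wDelta_{\kc}$ appears in the finite list of \cite{MZ88,Ko01,KT09}. Assertion (1) is then immediate from this inspection: every dual graph in those lists is a tree of smooth rational curves meeting transversally, hence $\wDelta_{\kc}$ is SNC.

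For (2), let $G := \Gal$ act naturally on $S_{\kc}, \wS_{\kc}, \Delta_{\kc}$, and $\wDelta_{\kc}$, and note that the rank-one hypothesis on $S$ amounts to the statement that the $G$-invariant part of $\mathrm{NS}(S_{\kc}) \otimes \bQ$ is one-dimensional. I would then run through each type in the classification. For (i), every such type possesses a $G$-fixed irreducible component of the exceptional configuration --- forced either by uniqueness of some combinatorial feature (a branching vertex, or the central component of a chain) or by the rank-one constraint --- carrying a singular point, which yields the desired $\bk$-rational singular point of $S$. For (ii), the classification shows $\sharp \Sing(S_{\kc}) \le 2$ in every case, whence $\sharp \Sing(S) \le 2$; and when equality holds the two singular points sit at graph-theoretically inequivalent positions, so they cannot be exchanged by $G$, giving $\sharp \Sing(S) = 2$ if and only if $\sharp \Sing(S_{\kc}) = 2$. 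For (iii), $\rho_{\kc}(S_{\kc})$ can be read off from the dual graph via the standard numerical exact sequence relating $\mathrm{NS}(\wS_{\kc})$, $\mathrm{NS}(S_{\kc})$, and the classes of the $\sigma_{\kc}$-exceptional curves; comparison with the singular-point count yields the dichotomy $\sharp \Sing(S_{\kc}) = 1$ or $\rho_{\kc}(S_{\kc}) + 1$.

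For (3), assume additionally that $S$ is lc and $\rho_{\kc}(S_{\kc}) > 1$, so by (2)(iii), $S_{\kc}$ has exactly $\rho_{\kc}(S_{\kc}) + 1 \ge 3$ singular points and a genuinely reducible exceptional configuration. The rank-one constraint over $\bk$ forces $G$ to act on the dual graph of $\wDelta_{\kc}$ with enough non-trivial orbits to cut the $G$-invariant Picard rank down to one, while the lc assumption restricts the singularity types allowed at each contracted subgraph. Running through each admissible $\kc$-graph and enumerating compatible $G$-actions would produce precisely the 52 dual graphs listed in Appendix~\ref{9}.

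The main obstacle will be the case analysis in (3): for each graph arising in the algebraically closed classification one must determine the possible Galois-equivariant structures, check the rank-one and lc conditions, and describe the resulting dual graph of $\wDelta_{\kc}$. Handling the many degenerate or almost-symmetric configurations without missing or double-counting cases is the delicate part, and parts of (2)(i) and (2)(ii) --- where one must exclude exotic $G$-actions that would disturb the required fixed components --- will likely hinge on the same bookkeeping.
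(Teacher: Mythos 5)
Your reduction step is based on a false premise, and this undermines all three parts. The hypothesis is that $S$ has rank one over $\bk$, not over $\kc$: by Lemma \ref{lem(2-2)} one has $\sharp \Delta = \rho _{\bk}(S)=1$ but $\sharp \Delta _{\kc} = \rho _{\kc}(S_{\kc})$, and the irreducible components of $\Delta _{\kc}$ form a single $\Gal$-orbit, so after base extension $\rho _{\kc}(S_{\kc})$ can be any $n\ge 1$. The lists of \cite{MZ88,Ko01,KT09} classify boundaries of compactifications of $\bA ^2_{\kc}$ into surfaces of rank one \emph{over the algebraically closed field}, i.e.\ they only cover $\rho _{\kc}(S_{\kc})=1$ (this is exactly the content of the Remark following Theorem \ref{main}). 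Part (3) of the theorem concerns $\rho _{\kc}(S_{\kc})>1$, and the graphs (1)--(52) of Appendix \ref{9} are new configurations that do not occur in those lists; they cannot be obtained by ``enumerating Galois-equivariant structures'' on the old graphs, because the underlying $\kc$-surfaces are different. So the statement that $(S_{\kc},\Delta _{\kc})$ ``appears in the finite list'' is simply wrong, and with it the claim that (1) is immediate by inspection: SNC-ness of $\wDelta _{\kc}$ is a genuine assertion that the paper proves by a contradiction argument involving the extra blow-up $\mu$, the Galois orbit of the $(-1)$-curves $\hC _i$, and the structure of minimal normal compactifications (Lemmas \ref{lem(4-3)}, \ref{easy lem}, \ref{not mnc}); indeed the paper exhibits an example where $\wDelta _{\kc}$ fails to be SNC once $\rho _{\bk}(S)>1$, so no soft argument can give (1).

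The same confusion invalidates your treatment of (2). Your claim that ``the classification shows $\sharp \Sing (S_{\kc})\le 2$ in every case'' contradicts the theorem itself: by (2)(iii) the geometric number of singular points equals $\rho _{\kc}(S_{\kc})+1$ in many cases (e.g.\ graph (2) of Appendix \ref{9} has $n+1$ singular points, $n$ arbitrary); Lemma \ref{lem(2-3)} bounds the number of singular points by $2$ only for surfaces of rank one over $\kc$. What (2)(ii) bounds by $2$ is $\sharp \Sing (S)$, the set of $\bk$-rational singular points, and the paper proves (2)(i)--(iii) for $n\ge 2$ by a direct geometric argument: the components $C_1,\dots ,C_n$ of $\Delta _{\kc}$ meet in a single point $p_0$, which is shown to be singular and $\bk$-rational (Lemma \ref{lem(5-1)} plus Theorem \ref{main}(1)), and the remaining singular points, if any, form one $\Gal$-orbit of length $n$ with exactly one on each $C_i$, forced by $(\wC _i\cdot \wDelta -\wC _i)\le 2$ (Lemma \ref{not mnc}). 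None of this is a matter of reading off fixed components from a pre-existing list. Finally, for (3) the paper must actually carry out a new classification (Sections \ref{3}, \ref{4}, \ref{6}): contract the $\Gal$-orbit of $(-1)$-curves repeatedly (Lemma \ref{lem(6-1)}), control the resulting mnc (Lemma \ref{lem(6-2)}), and analyze the three possibilities for the distinguished singular point $p_0$ (cyclic quotient, non-cyclic quotient, lc non-quotient) using the twig machinery of Proposition \ref{prop(3)}. Your proposal contains no substitute for this construction, so the case analysis you defer to ``bookkeeping'' is in fact the entire mathematical content, and it is not a refinement of the algebraically closed classification but a genuinely new one.
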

\begin{rem}
In Theorem \ref{main}(3), if $\rho _{\kc}(S_{\kc})=1$, then the weighted dual graph of $\wDelta _{\kc}$ is classified (see {\cite[Appendix C]{Ko01}} and {\cite[Fig. 1]{KT09}}). 
\end{rem}
Furthermore, we obtain the method of determining whether any lc del Pezzo surface of rank one contains the affine plane or not. 
In other words, our next main result is the following: 
\begin{thm}\label{main(2)}
Let $S$ be an lc del Pezzo surface of rank one defined over $\bk$ such that $\Sing (S_{\kc}) \not= \emptyset$. 
Then the following two assertions are equivalents: 
\begin{enumerate}
\renewcommand{\labelenumi}{(\alph{enumi})}
\item $S$ contains the affine plane $\bA ^2_{\bk}$. 
\item Letting $\sigma :\wS \to S$ be the minimal resolution over $\bk$, there exists a reduced effective divisor $\wDelta$ on $\wS$ such that the exceptional set of $\sigma$ is included in $\Supp (\wDelta )$, each irreducible component of $\wDelta _{\kc}$ is a rational curve  and the weighted dual graph of $\wDelta _{\kc}$ is one of the graphs in {\cite[Appendix C]{Ko01}}, {\cite[Fig. 1]{KT09}} or Appendix \ref{9}. 
\end{enumerate}
\end{thm}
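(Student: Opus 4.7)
My plan is to split the equivalence into its two implications and treat each with a different tool: the forward direction will be a direct application of Theorem \ref{main}, while the converse will be a case-by-case verification against the three classification lists, combined with Kambayashi's descent result already cited in the introduction.

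For (a) $\Rightarrow$ (b), suppose $S$ contains $\bA^2_{\bk}$ and let $\Delta := (S \backslash \bA^2_{\bk})_{\red}$, so that $(S,\Delta)$ is a compactification. Set $\wDelta := \sigma^{\ast}(\Delta)_{\red}$. Since $\bA^2_{\bk}$ is smooth, $\Sing(S)$ is contained in $\Supp(\Delta)$, and hence the exceptional set of $\sigma$ is contained in $\Supp(\wDelta)$. Over $\kc$, $\wDelta_{\kc}$ is an SNC boundary divisor of $\bA^2_{\kc}$, so it is a tree of smooth rational curves by a standard fact used throughout \cite{MZ88,Ko01,KT09}; in particular every component of $\wDelta_{\kc}$ is rational. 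The classification of the weighted dual graph of $\wDelta_{\kc}$ is exactly Theorem \ref{main}(3) when $\rho_{\kc}(S_{\kc}) > 1$, and is the content of \cite[Appendix C]{Ko01} and \cite[Fig.~1]{KT09} (as recorded in the remark after Theorem \ref{main}) when $\rho_{\kc}(S_{\kc}) = 1$.

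For (b) $\Rightarrow$ (a), write $\wDelta = E + \wDelta'$ with $E$ the $\sigma$-exceptional part, and set $\Delta := \sigma_{\ast}(\wDelta')$. Then $\wDelta = \sigma^{\ast}(\Delta)_{\red}$ and $U := S \backslash \Supp(\Delta)$ agrees with $\wS \backslash \Supp(\wDelta)$, so it suffices to show $U \simeq \bA^2_{\bk}$. By Kambayashi's theorem on the absence of non-trivial $\bk$-forms of $\bA^2_{\kc}$ \cite{Ka75}, this reduces to the $\kc$-statement $U_{\kc} \simeq \bA^2_{\kc}$. I will then argue one graph at a time: for each entry in \cite[Appendix C]{Ko01}, \cite[Fig.~1]{KT09} and Appendix \ref{9}, the self-intersection data and the combinatorial tree structure determine a canonical sequence of successive contractions of $(-1)$-curves lying in $\wDelta_{\kc}$ that terminates at a standard minimal compactification of $\bA^2_{\kc}$ (typically $(\bP^2_{\kc}, L)$ or a Hirzebruch pair $(\bF_n, S_{\infty} + F)$). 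Since each such contraction preserves the open complement, we may read off $U_{\kc} \simeq \bA^2_{\kc}$.

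The main obstacle is precisely the last step: justifying that the weighted dual graph \emph{alone} is sufficient to recover a compactification of $\bA^2_{\kc}$, i.e.\ that any SNC pair $(\wS_{\kc}, \wDelta_{\kc})$ on a smooth rational surface whose boundary realizes one of the listed graphs actually admits the contraction sequence described above. The way I would handle this is to observe that each graph in the lists was \emph{extracted} from an explicit compactification by blow-ups concentrated on the boundary, so the contraction sequence is dictated uniquely (up to obvious symmetries) by which vertices carry the weight $-1$; rigidity of successive blow-downs of rational trees on smooth rational surfaces then forces the resulting minimal pair to be the one recorded in the original construction, and with it the identification $U_{\kc} \simeq \bA^2_{\kc}$. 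Once this combinatorial reconstruction is carried out for the new graphs (1)--(52) of Appendix \ref{9} (the analogous verification for the other two lists being already present in the literature), the proof is complete.
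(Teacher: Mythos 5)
Your direction (a) $\Rightarrow$ (b) is fine and coincides with the paper's (Theorem \ref{main}(3) together with the known lists for $\rho_{\kc}(S_{\kc})=1$). The gap is in (b) $\Rightarrow$ (a), exactly at the step you yourself single out as the main obstacle: the claim that the weighted dual graph alone, via ``rigidity of successive blow-downs,'' forces the contracted pair to be the standard one and hence $U_{\kc}\simeq\bA^2_{\kc}$. This is false as stated: the weighted dual graph of the boundary does not determine the complement. For instance, a $+1$-section $H$ of $\bF_1$ realizes the graph $[1]$ of the mnc $(\bP^2_{\kc},L)$, yet $\bF_1\setminus H$ contains the $(-1)$-section, so it is not affine, let alone $\bA^2$; more generally, blowing up points away from the boundary leaves the graph unchanged while destroying the conclusion. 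What excludes such configurations here --- and what your argument never uses --- is the hypothesis $\rho_{\bk}(S)=1$: it makes $\Delta=\sigma_{\ast}(\wDelta)$ $\bQ$-ample, hence $S\setminus\Supp(\Delta)\simeq\wS\setminus\Supp(\wDelta)$ is affine by Goodman's theorem \cite{Go69}.

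Once affineness is available, the right tool is Lemma \ref{Kishimoto} (Kishimoto's converse to Morrow): an affine complement, rational boundary components, and a boundary whose weighted dual graph matches that of an mnc of $\bA^2$ already force the pair to be an mnc of $\bA^2$. This is how the paper argues: it contracts $\wDelta$ over $\bk$ to a pair $(\cS,\cDelta)$ whose graph over $\kc$ is $[1]$ or $[0,m]$, observes the complement is affine as above, applies Lemma \ref{Kishimoto} to conclude $\cS_{\kc}\setminus\Supp(\cDelta_{\kc})\simeq\bA^2_{\kc}$, and only then descends to $\bk$ by Kambayashi \cite{Ka75}. Your use of Kambayashi is the same, but without the ampleness/affineness step and Lemma \ref{Kishimoto} the graph-by-graph ``reconstruction'' cannot close; replacing the rigidity claim by these two ingredients repairs the proof.
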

Moreover, as an application of Theorems \ref{main} and \ref{main(2)}, we obtain as follows: 
\begin{thm}\label{main(3)}
Let $S$ be a Du Val del Pezzo surface of rank one defined over $\bk$ such that $\Sing (S_{\kc}) \not= \emptyset$, and let $d$ be the degree of $S$, i.e., $d = (-K_S)^2 \in \{1,\dots ,6,8\}$. 
Then: 
\begin{enumerate}
\item If $d=8$, then $S$ contains $\bA ^2_{\bk}$ if and only if $S$ has a smooth $\bk$-rational point\footnote{Note that it is equivalent that $S$ is a trivial $\bk$-form of the weighted projective space $\bP _{\kc}(1,1,2)$.}. 
\item If $d=5$ or $6$, then $S$ always contains $\bA ^2_{\bk}$. 
\item If $d \le 4$, $S$ contains $\bA ^2_{\bk}$ if and only if the pair of the degree $d$ and the singularity type of $S_{\kc}$ is one of the following: 
\begin{align*}
(4,D_5),\,(4,D_4),\,(4,A_2+2A_1),\,(4,A_2),\,(3,E_6),\,(3,D_4), \, (2,E_7),\,(2,E_6),\,(2,A_6),\,(1,E_8). 
\end{align*}
\end{enumerate}
\end{thm}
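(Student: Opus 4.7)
The strategy is to deduce the theorem from the dual-graph criterion of Theorem \ref{main(2)}, combined with the structural classification in Theorem \ref{main} and the tables of Appendix \ref{9} (together with the $\rho _{\kc}=1$ lists of \cite{Ko01,KT09}). Throughout, write $\sigma :\wS \to S$ for the minimal resolution and, when $S$ contains $\bA ^2_{\bk}$, $\wDelta$ for $\sigma ^\ast(\Delta )_{\red}$ where $\Delta$ is the boundary divisor.

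\emph{Necessity.} Assume $S$ contains $\bA ^2_{\bk}$. By Theorem \ref{main}(3) applied when $\rho _{\kc}(S_{\kc})>1$, and by the lists of \cite{Ko01,KT09} when $\rho _{\kc}(S_{\kc})=1$, the weighted dual graph of $\wDelta _{\kc}$ lies in the combined list. Since $\mathrm{Exc}(\sigma )\subset \Supp (\wDelta )$, the Du Val type of $S_{\kc}$ is read off as the subgraph of $(-2)$-vertices arising from $\mathrm{Exc}(\sigma )_{\kc}$, and the degree $d=(-K_{S})^2$ is recovered from the self-intersection data of the graph together with the coefficients of $\sigma ^\ast K_{S}$. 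A direct inspection of these tables degree-by-degree yields precisely the pairs claimed in (1)--(3): for $d\le 4$ exactly the ten listed pairs occur, for $d=5,6$ every Du Val type of a rank-one del Pezzo of that degree occurs, and for $d=8$ only $(8,A_1)$ occurs.

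\emph{Sufficiency.} For each admissible pair I exhibit a reduced effective divisor $\wDelta$ on $\wS$, defined over $\bk$, that contains $\mathrm{Exc}(\sigma )$ and whose $\kc$-dual graph matches one of the listed graphs; Theorem \ref{main(2)} then yields the embedding $\bA ^2_{\bk}\hookrightarrow S$. Over $\kc$ the components of $\wDelta _{\kc}$ extending $\mathrm{Exc}(\sigma )_{\kc}$ are determined by their intersection pattern with the exceptional curves and their self-intersections; in most of the listed graphs these combinatorial invariants distinguish each component individually, so the action of $\mathrm{Gal}(\kc/\bk)$ on $\wS_{\kc}$ must fix each component and $\wDelta _{\kc}$ descends to $\bk$. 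In the few graphs admitting an automorphism exchanging two extra components of equal self-intersection, one verifies that the two curves may either descend jointly to a single irreducible $\bk$-component (still a boundary divisor of the required form) or that rigidity forces trivial Galois action. Running this case-check through the appendix gives (3), and the same argument applied to all rank-one Du Val del Pezzos of degrees $5$ and $6$ gives (2), because in these low-rank cases the dual graphs are so rigid that the descent is automatic once one notes, via Theorem \ref{main}(2)(i), that $S$ has a $\bk$-rational singular point.

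\emph{The case $d=8$.} Up to isomorphism over $\kc$, the only singular rank-one Du Val del Pezzo of degree $8$ is $\bP _{\kc}(1,1,2)$, whose $A_1$ singularity admits the complement of any line as an open $\bA ^2_{\kc}$. For the trivial $\bk$-form $\bP _{\bk}(1,1,2)$ this descends directly, giving $\bA ^2_{\bk}\subset S$. Conversely, any embedding $\bA ^2_{\bk}\hookrightarrow S$ supplies a $\bk$-rational smooth point, and, by the footnote, forces $S$ to be the trivial $\bk$-form; this proves (1).

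The principal obstacle is the Galois-descent step inside Sufficiency: for several of the appendix graphs a combinatorial symmetry could a priori swap two non-exceptional components under $\mathrm{Gal}(\kc/\bk)$, and one must verify graph-by-graph that a $\bk$-rational boundary divisor of the required type still exists. The remaining work is the combinatorial sweep through the appendix needed for Necessity, which is routine but lengthy.
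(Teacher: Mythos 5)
Your overall strategy is the same as the paper's: both directions are pushed through the criterion of Theorem \ref{main(2)} together with an inspection of the graph lists in \cite[Appendix C]{Ko01}, \cite{KT09} and Appendix \ref{9}, with $d=8$ handled separately. However, there are concrete gaps. First, part (2) requires knowing which pairs of degree and singularity type can occur at all for a rank-one Du Val del Pezzo over $\bk$ with $\Sing (S_{\kc})\not=\emptyset$ and $d\ge 5$; the paper supplies this as Lemma \ref{degree} (only $(8,A_1)$, $(6,A_2+A_1)$, $(6,A_2)$, $(6,A_1)$ with three lines, $(5,A_4)$, imported from \cite{Sw}). Your assertion that ``for $d=5,6$ every Du Val type of a rank-one del Pezzo of that degree occurs'' in the tables presupposes exactly this classification without providing it; the table inspection alone cannot yield the statement ``$S$ always contains $\bA^2_{\bk}$''.

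Second, the descent step, which is the heart of the ``if'' direction, is both mis-argued and left unverified. You claim that because combinatorial invariants distinguish the extra components individually, $\Gal$ ``must fix each component''; this is false in general, since $\Gal$ typically permutes the exceptional $(-2)$-curves (the three outer curves of $D_4$, the two $A_1$-curves in $A_2+2A_1$, etc.) and hence permutes the attached $(-1)$-curves — indeed rank one over $\bk$ forces the boundary components to lie in a single $\Gal$-orbit (Lemma \ref{lem(2-2)}). What is actually needed, and what the paper asserts after its inspection, is only that the \emph{union} of the required $(-1)$-curves is $\Gal$-stable, which holds because the configuration is canonically attached to the $\Gal$-stable exceptional locus. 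Your appeal to Theorem \ref{main}(2)(i) to obtain a $\bk$-rational singular point in degrees $5$ and $6$ is circular, since that statement presupposes that $S$ already contains $\bA^2_{\bk}$ (here one should instead use that the relevant singular points are unique of their type). For $d=8$ you route ``smooth $\bk$-point $\Rightarrow$ $\bA^2_{\bk}$'' through the footnote's unproven equivalence with the trivial form of $\bP_{\kc}(1,1,2)$, whereas the paper argues directly that the boundary graph is realized over $\bk$ if and only if the $\bF_2$-form $\wS$ carries a $\bk$-rational fiber, i.e.\ if and only if $S$ has a smooth $\bk$-rational point. Since the case-by-case verification you defer as ``routine but lengthy'' is essentially the entire content of the paper's proof, what you have is an outline of the correct strategy rather than a complete proof.
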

\subsection{Organization of article}

In \S \ref{2}, we shall review some basic properties of compactifications of the affine plane. 
In \S \ref{3}, we shall present some facts about twigs, where a twig is a special kind of weighted dual graph, which will play an important role in proving Theorem \ref{main}. 
In \S \ref{4}, we shall prepare some results of compactifications of the affine plane. 
These results will also play an important role in proving Theorem \ref{main}. 
In \S \ref{5}, we will prove Theorem \ref{main}(1) and (2), which are the key results in the next section. 
In \S \ref{6}, we will prove Theorem \ref{main}(3), in other words, we will classify compactifications of the affine plane into singular lc del Pezzo surfaces of rank one defined over an algebraically non-closed field. 
In \S \ref{7}, as an application of Theorem \ref{main}, we shall show Theorems \ref{main(2)} and \ref{main(3)}. Moreover, we will yield a criterion for del Pezzo fibrations with canonical singularities to contain vertical $\bA^2$-cylinders in terms of degree and singular type of generic fibers (see Corollary \ref{cano}). 
In \S \ref{8}, we will give various remarks about Theorem \ref{main}. 
Finally, Appendix \ref{9} summarizes the list of configurations of all compactifications of the affine plane into singular lc del Pezzo surfaces of rank one defined over an algebraically non-closed field in terms of weighted dual graphs. 
\begin{conventions}
Let $X$ be an algebraic variety defined over $\bk$. 
Then $X_{\kc}$ denotes the base extension of $X$ to the algebraic closure $\kc$. 
Furthermore, we write $\Sing (X) := \Sing (X_{\kc}) \cap X(\bk )$. 

Let $D$ be a reduced effective divisor on a variety defined over $\bk$. 
Then $D_{\kc}$ denotes the base extension of $D$ to the algebraic closure $\kc$. 
We say that $D_{\kc}$ is an {\it SNC-divisor} if $D_{\kc}$ has only simple normal crossings. 
Moreover, $\sharp D$ denotes the number of all irreducible components in $\Supp (D)$ over $\bk$. 
Note that if $\Supp (D)$ contains an irreducible component, which is not geometrically irreducible, then the formula $\sharp D < \sharp D_{\kc}$ holds. 

For any weighted dual graph, a vertex $\circ$ with the number $m$ corresponds to an $m$-curve (see also the following Notation). 
Exceptionally, we omit this weight (resp. we omit this weight and use the vertex $\bullet$ instead of $\circ$) if $m=-2$ (resp. $m=-1$). 
\end{conventions}
\begin{notation}
We will use the following notations: 
\begin{itemize}
\item $\rho _{\bk}(X)$: the rank of the Neron-Severi group of a variety $X$ defined over $\bk$. 
\item $\Cl (X)$: the divisor class group of a variety $X$. 
\item $\Pic (X)$: the Picard group of a variety $X$. 
\item $\varphi ^{\ast}(D)$: the total transform of a divisor $D$ by a morphism $\varphi$. 
\item $\psi _{\ast}(D)$: the direct image of a divisor $D$ by a morphism $\psi$. 
\item $(D \cdot D')$: the intersection number of two divisors $D$ and $D'$. 
\item $(D)^2$: the self-intersection number of a divisor $D$. 
\item $\bF _m$: the Hirzebruch surface of degree $m$, i.e., $\bF _m = \bP (\sO _{\bP ^1} \oplus \sO _{\bP ^1}(m))$. 
\item $m$-curve: a smooth projective rational curve defined over $\kc$ with self-intersection number $m$. 
\end{itemize}
\end{notation}
\begin{acknowledgment}
The author would like to thank his supervisor Professor Takashi Kishimoto for his encouragement, useful comments and many discussions. 
Also, he is grateful to the referee for suggesting many valuable comments that helped to improve this article. 
\end{acknowledgment}
\section{Preliminaries}\label{2}
In this section, we shall recall some basic facts about compactifications of the affine plane. 
\subsection{Normal projective surfaces containing the affine plane}\label{2-1}
\begin{lem}\label{lem(2-1)}
For any compactification $(V,D)$ of the affine plane over $\bk$, the boundary divisor $D$ has no cycle. 
\end{lem}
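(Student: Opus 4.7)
The plan is to reduce to a geometric SNC setting and then invoke the simple connectivity of $\bA^2_{\kc}$. First, since the paper has already remarked that $(V,D)$ is a compactification of $\bA^2_{\bk}$ if and only if $(V_{\kc}, D_{\kc})$ is a compactification of $\bA^2_{\kc}$, and since a cycle in the dual graph of $D$ yields a cycle in that of $D_{\kc}$ (each $\bk$-component splits into a Galois orbit of $\kc$-components, but intersection points persist), I would assume $\bk = \kc$ throughout. Then I would take a log resolution $\sigma : \wV \to V$ such that $\wD := (\sigma^{-1}(D))_{\red}$ is SNC on the smooth surface $\wV$. Since the singular locus of $V$ is contained in $\Supp(D)$, the morphism $\sigma$ is an isomorphism over $V \setminus D$, so $\wV \setminus \wD \simeq \bA^2_{\kc}$. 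As resolution of singularities and further blowing up non-SNC points only add trees to or subdivide edges of the dual graph, $D$ has a cycle if and only if $\wD$ does, reducing the problem to the SNC case on a smooth surface.

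Next, I would prove that the SNC dual graph of $\wD$ has no cycle using classical topology. The surface $\wV$ is a smooth projective rational surface (it contains $\bA^2_{\kc}$ as a dense open set), hence simply connected, so $H_1(\wV;\bZ) = 0$. The complement $\wV \setminus \wD \simeq \bC^2$ is contractible, so from the long exact sequence of the pair $(\wV, \wD)$ together with $H_k(\wV, \wD) \cong H^{BM}_k(\bC^2)$ (nonzero only in top degree $k = 4$), one obtains $H_1(\wD;\bZ) = 0$. For an SNC curve in a smooth surface, $H_1$ of the dual graph is a direct summand of $H_1$ of the curve (for example via the Mayer--Vietoris spectral sequence associated to the SNC stratification). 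Therefore the dual graph of $\wD$ has trivial first Betti number, i.e., contains no cycle, and the same holds for $D$ by the reduction above.

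The main obstacle I foresee is making the Galois descent in the first step fully rigorous: a ``cycle'' in the $\bk$-dual graph of $D$ could in principle involve pairs of Galois-conjugate components or pairs of conjugate intersection points identified as a single $\bk$-point, and one has to check that its lift to $D_{\kc}$ is still a genuine cycle rather than some spurious Galois artifact. A cleaner alternative, more in line with the paper's convention (e.g., Theorem \ref{main}(1) is phrased in terms of $\wDelta_{\kc}$), is to interpret ``cycle'' from the outset as referring to the geometric dual graph of $D_{\kc}$, which turns the descent into a tautology and reduces the proof essentially to the SNC topological argument sketched above.
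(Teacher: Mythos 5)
Your argument is essentially correct, but it is worth noting that the paper does not actually prove this lemma: the proof is a one-line citation of \cite[Lemma 2.12]{Sw} (``well-known to the experts''), so your topological proof is a genuine reconstruction of the standard argument behind such references (in the spirit of Ramanujam--Morrow) rather than a reproduction of anything in the text. Your main chain --- pass to a log resolution with $\wV\setminus\wD\simeq\bA^2_{\kc}$, use contractibility of $\bA^2_{\kc}$ and simple connectedness of the rational surface $\wV$ to get $H_1(\wD;\bZ)=0$ via the pair sequence and $H_k(\wV,\wD)\cong H^{BM}_k(\bC^2)$, then use that $H_1$ of the dual graph splits off $H_1(\wD)$ --- is sound. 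Three small caveats. First, your ``$D$ has a cycle if and only if $\wD$ does, since resolution only adds trees'' is both more than you need and not automatically justified: a priori the exceptional graph over a normal surface point need not be a tree (cusp singularities have cyclic graphs), and that $\sigma$ only inserts trees is essentially the content of the lemma itself over those points. But only the forward implication is needed, and it follows simply from connectedness of the fibres $\sigma^{-1}(p)\subseteq\Supp(\wD)$ (Zariski's main theorem): a cycle of components of $D$ lifts to a closed walk in the dual graph of $\wD$ whose bridges over distinct points are disjoint, hence contains a cycle. Second, $\kc$ need not embed into $\bC$, so to invoke classical topology you should spread the finite configuration out over a finitely generated subfield and use the Lefschetz principle; this is routine but should be said. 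Third, your final remark is the right reading of the statement: the paper only ever applies the lemma to geometric divisors ($\Delta_{\kc}$, $\wD_{\kc}$, $\hDelta_{\kc}$), i.e.\ with $\bk=\kc$, so interpreting ``cycle'' in the geometric dual graph both matches the paper's usage and removes the Galois-descent worry you correctly flagged (a ``cycle'' formed over $\bk$ by a pair of conjugate intersection points need not survive base change, so the descent direction you first sketched is genuinely delicate and best avoided).
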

\begin{proof}
This fact seems well-known to the experts. See, e.g., {\cite[Lemma 2.12]{Sw}}. 
\end{proof}
\begin{lem}\label{lem(2-2)}
For any compactification $(V,D)$ of the affine plane over $\bk$, we have $\sharp D = \rho _{\bk}(V)$. 
\end{lem}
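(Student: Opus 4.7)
The plan is to exploit the localization exact sequence for Weil divisor class groups, together with the fact that the affine plane $\bA ^2_{\bk}$ has trivial class group and only constant global units.

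First, I would write down the exact sequence
$$
\bigoplus _{i=1}^{\sharp D}\bZ \cdot D_i\longrightarrow \Cl (V)\longrightarrow \Cl (V\setminus \Supp (D))=\Cl (\bA ^2_{\bk})=0,
$$
with $D_1,\dots ,D_{\sharp D}$ the irreducible components of $D$ over $\bk$. This shows that $\Cl (V)$ is generated by the classes of the $D_i$, so $\mathrm{rank}\, \Cl (V)\le \sharp D$. For the reverse inequality, if $\sum a_iD_i=\mathrm{div}(f)$ for some $f\in \bk (V)^{\ast}$, then $f$ has no zeros or poles on $V\setminus \Supp (D)\simeq \bA ^2_{\bk}$, hence is a nonzero constant; so $\mathrm{div}(f)=0$ and $a_i=0$ for every $i$. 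Thus $\mathrm{rank}\, \Cl (V)=\sharp D$.

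To identify this with $\rho _{\bk}(V)$, I would pass to the minimal resolution $\sigma :\wV \to V$ with exceptional prime divisors $E_1,\dots ,E_{\sharp E}$ and apply the same computation on the smooth projective surface $\wV$, whose complement of $\wD :=\sigma ^{\ast}(D)_{\mathrm{red}}$ is still isomorphic to $\bA ^2_{\bk}$. This shows that the components of $\wD$ form a $\bQ$-basis of $\Pic (\wV)_{\bQ}$, and since $\wV$ is rational we obtain $\rho _{\bk}(\wV)=\sharp \wD$. Switching to the alternative basis $\{\sigma ^{\ast}D_i,E_j\}$ provided by Mumford's rational pullback, the intersection form becomes block-diagonal with blocks $\bigl( (D_i\cdot D_j)\bigr)$ and $\bigl( (E_k\cdot E_\ell )\bigr)$; the Hodge index theorem on $\wV$, combined with the negative-definiteness of the exceptional block, forces the first block to be nondegenerate. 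Hence no nonzero $\bQ$-combination $\sum a_iD_i$ is numerically trivial on $V$, so $\rho _{\bk}(V)=\mathrm{rank}\, \Cl (V)=\sharp D$.

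The main technical point is the passage from the class group $\Cl (V)_{\bQ}$ to the Neron-Severi group on the possibly non-$\bQ$-factorial surface $V$; once Mumford's rational pullback formalism and the Hodge index theorem on the resolution are invoked, the desired equality $\sharp D=\rho _{\bk}(V)$ follows immediately.
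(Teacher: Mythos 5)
Your proposal is correct and follows essentially the same route as the paper: the paper's proof is exactly your first paragraph, namely the localization argument showing $\bigoplus_i \bZ D_i \to \Cl (V)$ is surjective because $\Cl (\bA ^2_{\bk})=0$ and injective because the units of the coordinate ring of $\bA ^2_{\bk}$ are just $\bk ^{\times}$, giving $\Cl (V)\simeq \bZ ^{\sharp D}$. The only difference is that the paper passes from this isomorphism to $\sharp D=\rho _{\bk}(V)$ without further comment, whereas you make that step explicit via the minimal resolution, Mumford's pullback and the Hodge index theorem (showing the Gram matrix $\bigl( (D_i\cdot D_j)\bigr)$ is nondegenerate, so no nonzero class in $\Cl (V)_{\bQ}$ is numerically trivial) — a harmless and indeed clarifying elaboration of a point the paper leaves implicit.
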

\begin{proof}
Let us put $n:= \sharp D$ and $U := V \backslash \Supp (D)$. 
Let $C_1,\dots ,C_n$ be all irreducible components of $D$, let us take a free abelian group $\bigoplus _{i=1}^n\bZ C_i$ generated by $C_1,\dots ,C_n$, and let $f:\bigoplus _{i=1}^n\bZ C_i \to \Cl (V)$ be the group homomorphism defined by $f(C_i) := [C_i]$. 
Then $f$ is surjective by virtue of $\Cl (U) \simeq \Pic (U) = 0$ (cf.\ {\cite[Lemma 4.6]{KPZ14}}). 
Moreover, we know that $f$ is injective since the coordinate ring $R$ of $U$ satisfies $R ^{\times}=\bk ^{\times}$. 
Thus, we have a group isomorphism $\bigoplus _{i=1}^n\bZ C_i \simeq \Cl (V)$. 
In particular, $n = \rho _{\bk}(V)$. 
\end{proof}
\begin{lem}\label{lem(2-3)}
Assume $\bk = \kc$. 
Let $V$ be a normal projective surface of rank one with at most log canonical singularities over $\bk$ containing the affine plane $\bA ^2_{\bk}$. 
Then $\sharp \Sing (V) \le 2$. 
\end{lem}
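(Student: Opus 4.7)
My plan is to pass to the minimal resolution of $V$, extract strong topological constraints from the resulting smooth compactification of $\bA^2_{\bk}$, and close by invoking the classification of smooth SNC compactifications of the affine plane. First, by Lemma~\ref{lem(2-2)} together with $\rho_{\bk}(V)=1$, we have $\sharp D = 1$, so $D$ is irreducible; since $V \setminus \Supp(D) \simeq \bA^2_{\bk}$ is smooth, every singular point of $V$ lies on $\Supp(D)$. The Kojima--Takahashi result recalled in the introduction implies that $V$ is a numerical del Pezzo with rational singularities; combined with the lc hypothesis this forces log terminal, hence quotient, singularities. Therefore, on the minimal resolution $\sigma : \tilde V \to V$, the exceptional divisor $E = \bigsqcup_{i=1}^{s} E_i$ is a disjoint union of trees of smooth rational curves whose components all have self-intersection at most $-2$.

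Next I extract a strong topological constraint. Writing $\tilde D := D^* + E$ with $D^*$ the proper transform of $D$, the pair $(\tilde V, \tilde D)$ is again a compactification of $\bA^2_{\bk}$, so Lemma~\ref{lem(2-2)} yields $\rho_{\bk}(\tilde V) = \sharp \tilde D = 1 + n$ where $n := |E|$. Since $\tilde V$ is a smooth rational projective surface, $\chi(\tilde V) = 2 + \rho_{\bk}(\tilde V)$, and the closed-open decomposition gives $\chi(\tilde V) = \chi(\tilde D) + \chi(\bA^2) = \chi(\tilde D) + 1$, whence $\chi(\tilde D) = n + 2$. Combining this with the inclusion--exclusion $\chi(\tilde D) = \chi(D^*) + \chi(E) - \chi(D^* \cap E)$ and the formula $\chi(E_i) = |E_i| + 1$ (each $E_i$ being a tree of rational curves) produces the identity
\[
s \;=\; 2 + m - \chi(D^*), \qquad m := |D^* \cap E|.
\]
Since $D^*$ is an irreducible projective curve, $\chi(D^*) \le 2$, and since $D$ passes through every singular point, $m \ge s$. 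Plugging these bounds into the identity forces $\chi(D^*) = 2$ and $m = s$: the curve $D^*$ is topologically $\bP^1$ and meets the exceptional set in exactly $s$ points, one on each $E_i$.

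The decisive step is to invoke the classification of smooth SNC compactifications of $\bA^2$ of Morrow and Ramanujam: any such $(W, B)$ not already isomorphic to $\bP^2$ with a line, or to a Hirzebruch surface $\bF_n$ with a section-plus-fiber, admits an SNC-preserving contraction of some $(-1)$-curve contained in $B$. In our setting, $\tilde V \setminus \Supp(\tilde D)$ is affine, so every projective curve of $\tilde V$ lies in $\Supp(\tilde D)$; and since every component of $E$ has self-intersection $\le -2$, the only possible $(-1)$-curve in $\tilde D$ is $D^*$ itself. After (if necessary) passing to the minimal SNC resolution $W \to \tilde V$ and tracing the resulting sequence of contractions back to $\tilde V$, this forces $D^*$ to be SNC-contractible, which imposes $D^* \cdot (\tilde D - D^*) = D^* \cdot E \le 2$; combined with $D^* \cdot E \ge m = s$ from the previous paragraph we obtain $s \le 2$. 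The degenerate cases $\tilde V \in \{\bP^2, \bF_n\}$ are handled by direct inspection and yield $s \le 1$.

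The main technical obstacle is the last step when $\tilde D$ already fails to be SNC on $\tilde V$ (e.g. when $D^*$ meets some $E_i$ tangentially or at a node of $E_i$): the minimal SNC resolution $W \to \tilde V$ introduces extra $(-1)$-curves, and one must verify that as Morrow-style contractions are applied on $W$ to undo these extra blow-ups, the degree of the $D^*$-vertex in the dual graph of the evolving boundary indeed inherits the bound $\le 2$ imposed by SNC-contractibility. A careful case analysis of how such auxiliary blow-ups affect the intersection $D^* \cdot E$ on the way back to $\tilde V$ should close this gap.
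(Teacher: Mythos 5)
Your reduction steps are mostly sound: $\sharp D=1$ via Lemma~\ref{lem(2-2)}, the Euler-characteristic bookkeeping giving $\chi(D^*)=2$ and $m=s$, and the intended dichotomy via minimal normal compactifications are all in the spirit of the argument of Kojima--Takahashi (the paper itself does not reprove this lemma but simply cites {\cite[Lemma 4.2]{KT09}}). The genuine gap is exactly the one you flag and do not close: your decisive inequality $(D^*\cdot \wD - D^*)\le 2$ is only justified when $\wD:=D^*+E$ is an SNC divisor, and $\chi(D^*)=2$ does not rule out $D^*$ being cuspidal, tangent to $E$, or passing through a node of $E$. Proving that the boundary on the minimal resolution is SNC is itself a substantive result (it is {\cite[Lemma 4.1]{KT09}} in the algebraically closed case, and Theorem~\ref{main}(1) of this paper over general $\bk$, proved in \S\ref{5}), so ``a careful case analysis should close this gap'' leaves the hardest part of the lemma undone. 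A further inaccuracy in the same step: it is not true that every SNC compactification other than $(\bP^2,\text{line})$ or $(\bF_n,\text{section}+\text{fiber})$ admits an SNC-preserving contraction of a boundary $(-1)$-curve; minimal normal compactifications form a much larger class (Lemma~\ref{Morrow}). The correct dichotomy in the SNC case is: either $\wD$ contains no $(-1)$-curve, hence $(\wV,\wD)$ is an mnc and its boundary is a linear chain by Lemma~\ref{Morrow}(1), forcing $s\le 2$; or $D^*$ is the unique $(-1)$-curve and, since an mnc boundary contains no $(-1)$-curve, necessarily $(D^*\cdot \wD-D^*)\le 2$, again $s\le 2$. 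This repair works, but only under the unproved SNC hypothesis.

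A secondary error: the assertion that ``rational plus log canonical forces log terminal, hence quotient, singularities'' is false. Rational log canonical singularities that are not quotient singularities exist and are precisely the case (III) treated in \S\ref{6-3} of the paper (graphs (28)--(52) in Appendix~\ref{9}). This slip happens not to derail your computation, because rationality of the singularities alone already guarantees that the exceptional locus of the minimal resolution is a disjoint union of trees of smooth rational curves meeting transversally with self-intersections at most $-2$, which is all you use; but the claim as stated should be removed or corrected.
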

\begin{proof}
See {\cite[Lemma 4.2]{KT09}}. 
\end{proof}
\subsection{Minimal normal compactifications of the affine plane}\label{2-2}
In this subsection, assume that $\bk$ is algebraically closed. 
We recall minimal normal compactifications of the affine plane defined over an algebraically closed field of characteristic zero. 
We refer to {\cite{Mo73,Ki02}}. 
\begin{defin}
Let $(V,D)$ be a compactification of the affine plane $\bA ^2_{\bk}$ over $\bk$ such that $V$ is smooth. 
Then this pair $(V,D)$ is a {\it minimal normal compactification} (an {\it mnc}, for short) of $\bA ^2_{\bk}$ if $D$ is an SNC-divisor and any $(-1)$-curve $E$ in $\Supp (D)$ satisfies $(E \cdot D-E) \ge 3$. 
\end{defin}
Morrow classified the minimal normal compactifications of the affine plane when the base field is $\bC$ ({\cite{Mo73}}). 
In this article, we will mainly use the following facts: 
\begin{lem}\label{Morrow}
Let $(V,D)$ be an mnc of $\bA ^2_{\bk}$ over $\bk$. 
Then the following assertions hold: 
\begin{enumerate}
\item Any irreducible component of $D$ is a smooth rational curve and the dual graph of $D$ is a linear chain. In particular, $\Supp (D)$ does not contain any $(-1)$-curve on $V$. 
\item If $\sharp D = 1$, then $(D)^2=1$. 
\item If $\sharp D = 2$, then $D$ contains at least one irreducible component, say $\Gamma$, satisfying $(\Gamma )^2=0$. 
\item If $\sharp D \ge 3$, then $D$ contains exactly two irreducible components, say $\Gamma _0$ and $\Gamma _+$, satisfying $(\Gamma _0)^2=0$, $(\Gamma_+)^2>0$ and $(\Gamma _0 \cdot \Gamma _+)=1$. 
\end{enumerate}
\end{lem}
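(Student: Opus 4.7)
The plan is to run a minimal model program on $V$ while tracking the boundary $D$, reducing $(V,D)$ to a ``base model'' that is either $(\bP^2_{\bk}, \text{line})$ or $(\bF_n, \text{fiber}+\text{section})$, and then reading off the four statements from how the blow-ups back up to $(V,D)$ interact with the mnc hypothesis. Since $V \setminus \Supp(D) \simeq \bA^2_{\bk}$, the surface $V$ is smooth, projective and rational, and by Lemma \ref{lem(2-1)} the dual graph of $D$ is already a tree. To see that each component of $D$ is a smooth rational curve I would use the cohomological exact sequence of the pair $(V, D)$: because $\bA^2_{\bk}$ has trivial higher (étale/de Rham) cohomology and $V$ is rational, each component must have geometric genus zero, and the standard SNC normalisation argument gives smoothness.

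To upgrade ``tree'' to ``linear chain'' I would argue by contradiction. Suppose some component $E\subset D$ meets at least three other components. Keep contracting $(-1)$-curves that are either outside $D$ or are $(-1)$-curves inside $D$ meeting at most two other components (these are legal by definition of mnc once one recognises them, and contractions preserve $V \setminus D \simeq \bA^2_{\bk}$). The process terminates on a relatively minimal rational surface, i.e.\ $\bP^2_{\bk}$ or some $\bF_n$, where the image of $D$ is manifestly a chain (a line, resp.\ a fiber-plus-section). Blowing back up from the minimal model to $(V,D)$, a new branching can be introduced only by blowing up a node of the boundary, which produces a $(-1)$-curve meeting exactly two other components of $D$; the mnc condition $(E \cdot D-E) \ge 3$ forbids such a $(-1)$-curve from persisting in the final configuration. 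A finite bookkeeping on these blow-ups then shows the dual graph of $D$ remains a chain throughout, giving (1).

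For (2)--(4), combine the reduction above with Lemma \ref{lem(2-2)}, which gives $\sharp D = \rho _{\bk}(V)$. If $\sharp D = 1$, then $\rho_{\bk}(V)=1$, so $V=\bP^2_{\bk}$ and $D$ is a line, hence $(D)^2=1$. If $\sharp D = 2$, then $\rho_{\bk}(V)=2$; the MMP terminates on some $\bF_n$, and the two components of $D$ are the proper transforms of a fibre (self-intersection $0$) and the section of self-intersection $n$, which yields the desired $\Gamma$. If $\sharp D \ge 3$, an induction on the number of blow-ups over the base $\bF_n$ shows that exactly one 0-curve $\Gamma_0$ (coming from an untouched fibre class) and exactly one curve $\Gamma_+$ of positive self-intersection (the image of the high section) survive in the chain, with $(\Gamma_0 \cdot \Gamma_+)=1$: every further blow-up either occurs away from $\Gamma_0 \cup \Gamma_+$, preserving these two numerical invariants, or takes place at their intersection and is then immediately undone on the way back because the resulting $(-1)$-curve would violate the mnc condition.

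The two delicate steps are the branching-elimination in (1) and the invariance of the pair $(\Gamma_0, \Gamma_+)$ under the back-up blow-ups in (4). Both hinge on the same key mechanism: a blow-up at a node of the boundary creates a $(-1)$-curve meeting exactly two components of $D$, and this is exactly what the mnc bound $(E\cdot D-E)\ge 3$ excludes. I expect the main obstacle to be phrasing the inductive step cleanly enough that one can simultaneously track chain-ness, the persistence of a 0-curve, and the strict positivity of $(\Gamma_+)^2$, as opposed to the easier ``no-cycle'' and ``rational components'' assertions that follow almost formally from $V\setminus D\simeq \bA^2_{\bk}$.
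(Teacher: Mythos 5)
The paper does not prove this lemma at all: it is Morrow's classification of minimal normal compactifications of $\bA ^2$ and the proof is the citation \cite{Mo73} (with \cite{Ki02} invoked later for the converse), so you are attempting to reprove a classical theorem whose hard content is exactly the linear-chain assertion in (1). Your sketch does not supply that content, and two of the mechanisms it relies on are incorrect. First, contracting a $(-1)$-curve $E$ \emph{not} contained in $\Supp (D)$ does not preserve $V \backslash \Supp (D) \simeq \bA ^2_{\bk}$: such an $E$ necessarily meets $D$ (the complement is affine), so after contraction the new complement is the image of $\bA ^2_{\bk}$ with the affine part of $E$ collapsed, which is no longer $\bA ^2_{\bk}$; moreover the mnc condition says nothing about curves outside $D$. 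Consequently your key claim that on the relatively minimal model the image of $D$ is ``manifestly'' a line or a fiber plus a section has no justification — it presupposes that the complement stays $\bA ^2$ along the whole contraction, and even then identifying the boundary on $\bP ^2$ or $\bF _n$ requires an argument (e.g.\ $\Pic$ of the complement), which is fine but is not what carries the proof.

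Second, the branching-elimination step is wrong as stated: a new branch point of the dual graph is created not only by blowing up a node of the boundary but also by blowing up a \emph{smooth} point of an interior component, and the component at which branching occurs need not be a $(-1)$-curve in the final configuration, since later blow-ups on it can push its self-intersection below $-1$. The mnc condition $(E \cdot D-E) \ge 3$ only constrains $(-1)$-curves present in the final $D$, so it does not ``immediately undo'' such blow-ups, and your finite bookkeeping does not rule out branched boundaries all of whose $(-1)$-components are branch points. Excluding these configurations is precisely the Ramanujam–Morrow theorem (proved analytically via simple connectedness at infinity, or algebraically as in Fujita's theory and \cite{Ki02}), and no version of that argument appears in your outline. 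Parts (2)–(4) inherit the same gap, since they are read off from the reduction to $\bP ^2$ or $\bF _n$ that was never established. If your intention is to use the lemma rather than reprove it, the correct move is simply to cite \cite{Mo73}, as the paper does.
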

\begin{proof}
See {\cite{Mo73}}. 
\end{proof}
Furthermore, it is known that the converse of Morrow's result is true. 
More precisely, the following result holds: 
\begin{lem}\label{Kishimoto}
Let $V$ be a smooth projective surface defined over $\bk$ and let $D$ be a reduced divisor on $V$ such that $V \backslash \Supp (D)$ is affine and each irreducible component of $D$ is a rational curve. 
If the weighted dual graph of $D$ is the same as that of the boundary divisor of an mnc of the affine plane $\bA ^2_{\bk}$, then $(V,D)$ is an mnc of $\bA ^2_{\bk}$. 
\end{lem}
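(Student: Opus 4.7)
The argument proceeds by cases on $\sharp D$ according to Lemma \ref{Morrow}, the goal in each case being to recognize $(V,D)$ as a Morrow model by reading off the structure of $V$ from the weighted dual graph of $D$.

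When $\sharp D = 1$, Lemma \ref{Morrow}(2) forces $D$ to be a smooth rational curve with $(D)^2 = 1$. Since $V \setminus \Supp(D)$ is affine it contains no complete curves, so $(D \cdot C) > 0$ for every irreducible curve $C \neq D$ on $V$; combined with $(D)^2 = 1$, the Nakai-Moishezon criterion renders $D$ ample. The classical classification of smooth projective surfaces bearing an ample smooth rational curve of self-intersection one (a short Mori-theoretic argument using $K_V \cdot D = -3$ from adjunction, non-negativity of $K_V \cdot D$ on non-uniruled surfaces, and a descent along $(-1)$-curve contractions to the minimal model) identifies $V \cong \mathbb{P}^2$ with $D$ a line, so $V \setminus \Supp(D) \cong \mathbb{A}_{\bk}^2$.

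For $\sharp D \ge 2$, Lemma \ref{Morrow} supplies a component $\Gamma_0 \subset D$ with $(\Gamma_0)^2 = 0$. The first main step is to show that the pencil $|\Gamma_0|$ is basepoint-free and defines a $\mathbb{P}^1$-fibration $\Phi : V \to \mathbb{P}^1$ with $\Gamma_0$ as a fiber. Via the exact sequence
\[
0 \to \mathcal{O}_V \to \mathcal{O}_V(\Gamma_0) \to \mathcal{O}_{\mathbb{P}^1} \to 0,
\]
this reduces to the irregularity vanishing $h^1(V, \mathcal{O}_V) = 0$, extracted from the affineness of $V \setminus \Supp(D)$ and the rationality of every component of $D$ by a standard Albanese-image argument. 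The decisive geometric observation is then that every irreducible component of every singular fiber of $\Phi$ must lie in $\Supp(D)$, for otherwise such a component would be a complete curve inside the affine open. Combined with Lemma \ref{Morrow}(1) (no $(-1)$-curves inside $\Supp(D)$) and the classical fact that every reducible fiber of a $\mathbb{P}^1$-fibration on a smooth surface contains a $(-1)$-curve, this forces $\Phi$ to have no reducible fibers, so $\Phi$ is a $\mathbb{P}^1$-bundle and $V \cong \mathbb{F}_m$ for some $m \ge 0$. The remaining components of $D$ must then be sections of $\Phi$ whose self-intersections are dictated by the graph weights; direct inspection of the resulting Hirzebruch pair yields $V \setminus \Supp(D) \cong \mathbb{A}_{\bk}^2$.

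The main obstacle is the irregularity vanishing $h^1(V, \mathcal{O}_V) = 0$ that unlocks the pencil structure on $|\Gamma_0|$; without it one cannot construct $\Phi$. Once that input is secured, the rest of the argument is essentially combinatorial, governed by the interplay between the weighted dual graph of $D$, the absence of $(-1)$-curves inside $\Supp(D)$, and the constraint that all singular-fiber components of $\Phi$ must lie inside $\Supp(D)$.
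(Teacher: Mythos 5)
The decisive step of your argument in the case $\sharp D\ge 2$ — that affineness of $V\setminus\Supp(D)$ forces every irreducible component of every singular fibre of $\Phi$ to lie in $\Supp(D)$, whence $\Phi$ has no reducible fibres and $V\cong\bF_m$ — is a non sequitur, and the conclusion you draw from it is in fact false. Affineness only excludes complete curves \emph{disjoint} from $D$; a fibre component $C\not\subset\Supp(D)$ satisfies $C\cdot\Gamma_0=0$ but may perfectly well meet the horizontal component $\Gamma_+$ or the vertical components of $D$ lying in the same fibre, so no contradiction arises. Worse, for every Morrow graph with $\sharp D\ge 3$ the components other than $\Gamma_0$ and $\Gamma_+$ have self-intersection $\le -2$ and intersect $\Gamma_0$ trivially (the graph is a chain), so they are \emph{vertical} and must sit inside reducible fibres of $\Phi$; in the genuine mnc realizing such a graph those reducible fibres exist and each contains exactly one component not in $\Supp(D)$ (the closure of a fibre of the induced $\bA^1$-fibration on $\bA^2$). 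So your structural conclusion "$V\cong\bF_m$ with the remaining components of $D$ sections" contradicts the very existence of mnc's with three or more boundary components, and your argument, if valid, would prove a false statement. The whole difficulty of the Ramanujam--Morrow/Kishimoto theorem is concentrated exactly here: one must show that each reducible fibre contains precisely one component outside $\Supp(D)$, occurring with multiplicity one and attached at the right place, so that the complement is $\bA^2$; this cannot be done without using the specific weights of the Morrow chains (the adjoint-twig numerics of Fujita recalled in \S 3 of the paper), which your proposal never invokes. Note that the paper itself does not reprove this statement but cites Kishimoto's article for it.

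For completeness: your treatment of $\sharp D=2$ can be salvaged (a reducible fibre $F$ meets the horizontal component in total intersection $F\cdot\Gamma_+=1$, so some component of $F$ would be disjoint from $D$, contradicting affineness — this is the correct version of your claim, and it does give $V\cong\bF_m$ there), and the auxiliary steps are fixable but sloppy as written: the vanishing $h^1(V,\sO_V)=0$ needs one to first rule out a two-dimensional Albanese image (e.g.\ via $K_V\cdot\Gamma_0=-2<0$ with $\Gamma_0$ nef, so $K_V$ is not pseudo-effective and $V$ is birationally ruled), and in the case $\sharp D=1$ the "descent along $(-1)$-curve contractions" does not preserve the conditions $(D)^2=1$, $K_V\cdot D=-3$, so one should instead argue via $h^0(\sO_V(D))=3$ and the standard characterization of $(\bP^2,\text{line})$. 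None of this, however, repairs the main case $\sharp D\ge 3$, which is the substance of the lemma.
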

\begin{proof}
See {\cite{Ki02}}. 
\end{proof}
\section{Properties of twigs}\label{3}
\subsection{Some definitions}\label{3-1}
Let $D$ be an SNC-divisor on a smooth projective surface defined over $\kc$. 
Let $A$ be the weighted dual graph of $D$. 
If $A$ is given by the following graph: 
\begin{align*}
\xygraph{
\circ ([]!{+(0,-.3)} {^{-m_1}}) -[r] \circ ([]!{+(0,-.3)} {^{-m_2}}) -[r] \cdots ([]!{+(0,-.3)} {}) -[r] \circ ([]!{+(0,-.3)} {^{-m_r}})
},
\end{align*}
then $A$ is called the {\it twig} and we write this weighted dual graph $[m_1,\dots ,m_r]$ as $A$. 
In this subsection, we will present some definitions for the twig. 
The following definition is based on {\cite{Fuj82}} (see also {\cite{Ko01}}): 
\begin{defin}\label{def(3-1)}
Let $A =[m_1,\dots ,m_r]$ be a twig. 
Then the twig $[m_r, m_{r-1}, \ldots , m_1]$ is called the {\it transposal} of $A$ and denoted by ${}^tA$. 
We define also $\overline{A} := [m_2, \dots, m_r]$ and $\underline{A} := [m_1 ,\dots ,m_{r-1}]$, where we put $\overline{A} = \underline{A} = \emptyset$ if $r=1$. 
We say that $A$ is {\it admissible} if $m_i \ge 2$ for any $i=1,\dots ,r$. 
In what follows, we assume that $A$ is admissible. 
Then $d(A)$ denotes the absolute value of the determinant of the intersection matrix corresponding to $A$ and is simply called the {\it determinant} of $A$, where we put $d(\emptyset )=1$. 
We say that $e(A) := d(\overline{A})/d(A )$ is the {\it inductance} of $A$. 
By {\cite[Corollary (3.8)]{Fuj82}}, $e$ defines a one-to-one correspondence from the set of all admissible twigs to the set of rational numbers in the interval $(0,1)$. 
Hence, there exists uniquely an admissible twig ${A}^*$, whose inductance is equal to $1-e({}^tA )$, so that we call the admissible twig ${A}^*$ the {\it adjoint} of $A$.
\end{defin}
\begin{eg}
Consider two admissible twigs $A := [2,4]$ and $B := [2,2,3]$. 
Then $d({^tA})=d(A)=7$ and $d(\overline{{}^tA})=2$, so that $e({^tA}) = \frac{2}{7}$. 
Moreover, $d(B)=7$ and $d(\overline{B})=5$, namely, $e(B) = \frac{5}{7} = 1-e({}^tA)$. 
Hence, $B = A^*$. 
\end{eg}
Furthermore, we will use the following notation in this article: 
\begin{defin}
\begin{enumerate}
\item Letting ${A}_1,\dots ,{A}_s$ be twigs given by ${A}_i = [m_{i,1},\dots ,m_{i,r_i}]$ for $i=1,\dots ,s$, we write $[{A}_1,\dots ,{A}_s] := [m_{1,1},\dots ,m_{1,r_1},\dots \dots ,m_{s,1},\dots ,m_{s,r_s}]$. 
\item For a positive integer $t$, we write $[t \times 2] := [\underbrace{2,\dots ,2}_{t\text{-times}}]$. 
\item For a positive integer $m$ and a non-negative integer $t$, we write two twigs $L(m;t)$ and $R (m;t)$ respectively as follows: 
\begin{align*}
L (m;t) :=
\left\{ \begin{array}{ll}
{[} [t \times 2],[m]{]} &\text{if}\ t>0 \\
{[}m{]} & \text{if}\ t=0 
\end{array} \right. , \quad 
R (m;t) :=
\left\{ \begin{array}{ll}
{[}[m],[t \times 2]{]} & \text{if}\ t>0 \\
{[}m{]} & \text{if}\ t=0 
\end{array} \right. .
\end{align*}
\end{enumerate}
\end{defin}
\subsection{Twigs contracted to single smooth rational curves}\label{3-2}
Let $D$ be an SNC-divisor on a smooth projective surface over $\kc$ such that any irreducible component of $D$ is a rational curve. 
We recall the following result for later use in \S \S \ref{3-3}. 
\begin{lem}\label{Fujita}
Let $D$ be the same as above. Assume that the weighted dual graph of $D$ is the twig $[A ,[1],B ]$ for some admissible twigs $A$ and $B$. 
Then $D$ can be contracted to a $0$-curve if and only if $B ={A}^*$. 
\end{lem}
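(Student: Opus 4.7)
The plan is to induct on the total length $r + s$ of the outer twigs $A$ and $B$, proving both implications together. The crucial structural remark is that, since $A$ and $B$ are admissible, the central vertex $[1]$ is the \emph{unique} $(-1)$-curve in $D$, so any blow-down of $D$ must start by contracting it. Writing $A = [\underline{A}, a]$ and $B = [b, \overline{B}]$ with $a, b \geq 2$, this first contraction transforms $D$ into
\begin{equation*}
D' = [\underline{A},\, a - 1,\, b - 1,\, \overline{B}].
\end{equation*}

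I would then split on the pair $(a, b)$. If both $a \geq 3$ and $b \geq 3$, every vertex of $D'$ has self-intersection $\leq -2$, so the contraction process halts at $D' \neq [0]$; in parallel, $B \neq A^*$ in this regime because $A^*$ is forced to begin with a $2$ whenever $A$ ends in an entry $\geq 3$. Using the involution $D \mapsto {}^tD$ (which exchanges $(A, B)$ with $({}^tB, {}^tA)$ and is compatible with adjunction via $({}^tA)^* = {}^t(A^*)$), I may therefore reduce to the case $a = 2$. If furthermore $b \geq 3$, then $D' = [\underline{A},\, [1],\, [b - 1, \overline{B}]]$ is again of the form $[A', [1], B']$ with strictly smaller total length, and the induction hypothesis supplies
\begin{equation*}
D' \text{ contracts to a } 0\text{-curve} \iff [b - 1, \overline{B}] = (\underline{A})^{*}.
\end{equation*}
The remaining subcase $a = b = 2$ is a direct calculation: $D \to [\underline{A}, 1, 1, \overline{B}] \to [\underline{A}', 0, \overline{B}]$, whose endpoint is the $0$-curve $[0]$ exactly when $\underline{A} = \overline{B} = \emptyset$, consistent with $A = B = [2]$ and $[2]^{*} = [2]$.

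The real content is then the \emph{adjoint translation identity}: if $A$ is admissible of length $\geq 2$ ending in $2$, then $A^{*} = [b, \overline{A^{*}}]$ with $b \geq 3$, and furthermore $(\underline{A})^{*} = [b - 1, \overline{A^{*}}]$. This is what converts the induction hypothesis on $D'$ into a statement about the original $A$ and $B$, closing both directions of the equivalence. The identity itself follows from the defining relation $e(A^{*}) = 1 - e({}^tA)$ together with the standard determinant recursion $d([m_1, \ldots, m_k]) = m_1 \, d([m_2, \ldots, m_k]) - d([m_3, \ldots, m_k])$; the main obstacle I anticipate is managing this continued-fraction bookkeeping cleanly and fitting the edge cases ($|A| = 1$, or empty outer twigs) into the same framework. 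Once the identity is in hand, the induction, the case split, and the symmetry reduction are essentially routine.
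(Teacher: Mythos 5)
Your proposal is correct in strategy, and it is a genuinely different route from the paper's: the paper offers no argument at all for this lemma, simply citing {\cite[Proposition (4.7)]{Fuj82}}, whereas you give a self-contained induction on the total length of the outer twigs driven by the forced contraction of the unique $(-1)$-curve. The engine of your argument, the ``adjoint translation identity,'' is true and provable exactly as you indicate: with $n=d(A)$, $p=d(\underline{A})$ and $A$ ending in $2$ one has $e(A^*)=(n-p)/n$ and $e\bigl((\underline{A})^*\bigr)=(n-p)/p$, and a one-line inductance computation shows $[\beta-1,\overline{A^*}]$ has inductance $(n-p)/p$ as well, so injectivity of $e$ gives the identity. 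Your case split is also the right one (if $A$ ends in an entry $\ge 3$ then $e(A^*)>\tfrac12$, so $A^*$ begins with $2$, making the $a,b\ge 3$ case vacuous on both sides), and the transposal reduction is legitimate because $B=A^*$ is, directly from the definition, equivalent to ${}^tA=({}^tB)^*$. What your proof buys is independence from Fujita's machinery; what the citation buys the paper is that the same source already supplies the existence and uniqueness of adjoints used throughout \S 3. Your induction is in fact very close in spirit to the paper's own proof of the subsequent Lemma \ref{Fujita'}, which is an induction of the same shape resting on Lemma \ref{Fujita}.

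Two places need tightening. First, in the case $a=b=2$ your ``direct calculation'' does not account for all contraction orders: from $[\underline{A},1,1,\overline{B}]$ either $(-1)$-curve may be blown down, and the process need not halt at $[\underline{A}',0,\overline{B}]$ (e.g. $[2,2,1,2]\to[2,1,1]\to[1,0]\to$ a single $(+1)$-curve). You need the small supplementary observation that once a chain with at least two components contains a curve of self-intersection $\ge 0$, no sequence of blow-downs can terminate in a single $0$-curve, since reducing to one component forces at least one blow-down adjacent to that curve and each such blow-down strictly increases its self-intersection. Second, the edge cases you flag are genuine and must be folded in: when $A=[2]$ the contracted divisor is $[[1],[b-1,\overline{B}]]$, whose left twig is empty and hence outside the statement being inducted on, so you must either extend the induction to allow one empty side (showing directly that $[A,[1]]$ and $[[1],B]$ never contract to a $0$-curve when the remaining twig is admissible) or treat $|A|=1$ separately; relatedly, the claim ``$A$ ends in $2$ implies $A^*$ begins with an entry $\ge 3$'' requires $|A|\ge 2$, since $[2]^*=[2]$. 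Both repairs are routine, so I regard the proposal as a viable alternative proof rather than as having a fatal gap.
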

\begin{proof}
See, {\cite[Proposition (4.7)]{Fuj82}}. 
\end{proof}
In Lemma \ref{Fujita}, since the adjoint of any admissible twig is unique, note that $B$ is uniquely determined according to $A$. 
By applying Lemma \ref{Fujita}, we obtain the following result: 
\begin{lem}\label{Fujita'}
Let $D$ be the same as above. Assume that the weighted dual graph of $D$ is the twig $[[m],A ,[1],B ]$ for some integer $m$ and admissible twigs $A$ and $B$. 
Then $D$ can be contracted to the twig $[m,1]$ if and only if $B =\underline{{A}^*}$. 
\end{lem}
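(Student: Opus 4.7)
The plan is to relate the desired contraction of $D = [[m], A, [1], B]$ to the already-available contraction statement of Lemma~\ref{Fujita}.

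First I would observe that $D$ contracts to $[m, 1]$ precisely when the subchain $[A, [1], B]$ contracts to a single $(-1)$-curve, with the surviving component being the one originally corresponding to the leftmost entry $[a_1]$ of $A$. Both conditions are essential: the surviving curve must carry self-intersection $-1$, and it must be the one adjacent to the $[m]$-curve throughout, since no $(-1)$-curve adjacent to $[m]$ may be contracted without changing the weight $-m$.

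For the direction $B = \underline{A^*}\Rightarrow$ contractibility, I would write $A^* = [B, [k]]$ with $k \ge 2$ its final entry and consider a smooth projective surface $\widetilde{V}$ carrying a divisor with weighted dual graph $[[m], A, [1], A^*]$. By Lemma~\ref{Fujita} applied to the subchain $[A, [1], A^*] = [A, [1], B, [k]]$, this contracts to a single $0$-curve, and the order of $(-1)$-curve contractions can be arranged so that the $[a_1]$-derived component is the survivor. Tracking self-intersections through this sequence, the final $+1$ that raises the survivor from $-1$ to $0$ comes precisely from the contraction, at the very last step, of the $[k]$-derived component (which by that stage has become a $(-1)$-curve adjacent to the survivor). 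Omitting this terminal component at the outset --- that is, working with $[A, [1], B]$ instead of $[A, [1], A^*]$ --- removes exactly that last $+1$ and leaves the survivor at self-intersection $-1$, giving the contraction $D \to [m, 1]$.

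For the converse, I would argue by induction on $\sharp A + \sharp B$. The first contraction of the $[1]$-curve in $D$ produces $[[m], a_1, \ldots, a_{s-1}, a_s - 1, b_1 - 1, b_2, \ldots, b_t]$, and for the sequence to continue toward $[m, 1]$ while preserving $[m]$, a new $(-1)$-curve must appear away from the leftmost position. Case analysis (either $a_s = 2$ with $\sharp A \ge 2$ and $b_1 \ge 3$, or $a_s \ge 3$ with $b_1 = 2$; the remaining cases are easily ruled out) yields a smaller configuration $[[m], A'', [1], B'']$ of the same shape, to which the inductive hypothesis applies. The essential arithmetic input is the pair of adjoint identities $d(\overline{A^*}) = d(A) - d(\underline{A})$ and $d(\underline{A^*}) = d(A) - d(\overline{A})$, both following from $e(A^*) = 1 - e({}^tA)$ together with the symmetry $d(A^*) = d(A)$; these identities ensure that the relation $B = \underline{A^*}$ is preserved under the reduction $(A, B) \mapsto (A'', B'')$.

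The main obstacle is the bookkeeping: in the forward direction, justifying that the contraction order of Lemma~\ref{Fujita} can be chosen so the $[a_1]$-derived component is the survivor and that the terminal $+1$ comes from $[k]$; in the converse direction, verifying the adjoint identity through the inductive reduction. Both amount to careful continued-fraction arithmetic, and in practice the two directions can be unified into a single induction on $\sharp A + \sharp B$.
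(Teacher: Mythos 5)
Your plan is correct in outline, but it organizes the argument quite differently from the paper, and it is worth spelling out the trade-off. The paper writes $A$ uniquely in the block form $[L(m_r;t_r),\dots ,L(m_1;t_1)]$ and, by a single induction on $r$, determines \emph{explicitly} the unique admissible twig $B$ for which $[[m],A,[1],B]$ contracts to $[m,1]$ (formula (\ref{(3.1)})); it then identifies this explicit twig with $\underline{A^*}$ without any continued-fraction computation, by extending it by one further tip to a twig $B'$, checking (same induction) that $[A,[1],B']$ contracts to a $0$-curve, and invoking Lemma \ref{Fujita} together with uniqueness of the adjoint to get $B'=A^*$, hence $B=\underline{B'}=\underline{A^*}$. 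You never compute $B$: sufficiency is obtained by truncating a Fujita contraction of $[A,[1],A^*]$, and necessity by induction on $\sharp A+\sharp B$ using the identities $d(\overline{A^*})=d(A)-d(\underline{A})$ and $d(\underline{A^*})=d(A)-d(\overline{A})$ (these are correct; note the second also uses ${}^t(A^*)=({}^tA)^*$, not just $d(A^*)=d(A)$). Both routes are viable, but be aware that in your forward direction the appeal to Lemma \ref{Fujita} buys less than it appears to: knowing that $[A,[1],A^*]$ contracts to a $0$-curve in \emph{some} order does not by itself let you choose the order so that the $a_1$-component survives \emph{and} the terminal component of $A^*$ is the very last curve contracted, and proving that arrangement claim is essentially the same induction on the block structure of $A$ that the paper runs to establish (\ref{(3.1)}) -- so the ``bookkeeping'' you defer there is the actual content of the step, not a routine check, and deducing it from the lemma itself would be circular. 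In short, the paper concentrates all the combinatorics into one explicit induction plus a uniqueness-of-adjoint trick, while your version is formula-free but distributes comparable work into two places (the contraction-order arrangement and carrying the adjoint relation through the reduction $(A,B)\mapsto(A'',B'')$); if you write it up, those two points are where the details must be supplied.
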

\begin{proof}
Since $A$ is admissible, notice that $A$ can be uniquely denoted by $[L (m_r;t_r),\dots ,L (m_1;t_1)]$ for some $m_1 \ge 2$, $m_i \ge 3$ $(i>1)$ and $t_j \ge 0$ $(1 \le j \le r)$. 
By the induction on $r$, we see that $D$ can be contracted to the twig $[m,1]$ if and only if $B$ can be written as follows:  
\begin{align}\label{(3.1)}
B = \left\{ \begin{array}{ll} {[(m_1-2) \times 2]} & \text{\ if\ }r=1 \\ {[[(m_1-2) \times 2],R (t_1+3;m_2-3),\dots ,R (t_{r-1}+3;m_r-3)]} &  \text{\ if\ }r>1 \\ \end{array} \right. .
\end{align}
Let $D'$ be an SNC-divisor on a smooth projective surface over $\kc$ such that the weighted dual graph of $D'$ is the twig $[A ,[1],{B}']$, where ${B}'$ is the admissible twig defined by: 
\begin{align*}
{B}' := \left\{ \begin{array}{ll} {[}[(m_1-2)\times 2],[t_1-2]{]} & \text{\ if\ } r=1 \\ 
{[}[(m_1-2)\times 2],R (t_1+3;m_2-3),\dots ,R (t_{r-1};m_r-3),[t_r-2]{]} & \text{\ if\ } r>1 \\ \end{array} \right. .
\end{align*}
Then $D'$ can be contracted to a $0$-curve by induction on $r$. 
Hence, we have ${B}'={A}^*$ by Lemma \ref{Fujita}. 
In particular, $B$ is as in (\ref{(3.1)}) if and only if $B =\underline{{B}'}=\underline{{A}^*}$. 
\end{proof}
\begin{defin}\label{mA}
Let $A = [L (m_r;t_r),\dots ,L (m_1;t_1)]$ be an admissible twig, $m_1 \ge 2$, $m_i \ge 3$ $(i>1)$ and $t_j \ge 0$ $(1 \le j \le r)$. 
In this article, we then put $m_{A} := t_r+3$. 
\end{defin}
\begin{rem}\label{rem(mA)}
Let $A$ be an admissible twig. 
By definition of $m_A$ and Lemma \ref{Fujita'}, the twig $[[m],A ,[1],\underline{{A}^*},[m_{A}]]$ can be contracted to the twig $[m,1,2]$ for an arbitrary integer $m$. 
\end{rem}
\begin{eg}
Consider the admissible twig $A := [2,4]$. 
By definition, we know $m_A=4$. 
Meanwhile, since $A^* = [2,2,3]$, we obtain $\underline{{A}^*} = [2,2]$. 
For an arbitrary integer $m$, the twig $[[m],A ,[1],\underline{{A}^*},[m_{A}]] = [m,2,4,1,2,2,4]$ is then contracted to $[m,1,2]$ as follows: 
\begin{align*}
[m,2,4,1,2,2,4] &\to [m,2,3,1,2,4] \to [m,2,2,1,4] \to [m,2,1,3] \to [m,1,2]. 
\end{align*}
\end{eg}
\subsection{Twigs as boundary divisors of the affine plane}\label{3-3}
In this subsection, let $(\wV ,\wD )$ be a compactification of the affine plane $\bA ^2_{\kc}$ such that $\wV$ is a smooth projective surface over $\kc$ and the weighted dual graph of $\wD$ is the twig $[m_1,\dots ,m_r]$ with $m_i \ge 1$ for any $i=1,\dots ,r$. 
By Lemma \ref{Morrow}, $\wD$ consists of irreducible components $\{ \wC _i \} _{1 \le i \le r}$ such that $\wC _i$ is a $(-m_i)$-curve for $i=1,\dots ,r$, moreover, we see that $(\wV ,\wD )$ is not an mnc of $\bA ^2_{\kc}$. 
Let 
$\nu :\wV \to \cV$ be a sequence of contractions of $(-1)$-curves and subsequently (smoothly) contractible curves in $\Supp (\wD )$ such that the pair $(\cV ,\cD )$ is an mnc of $\bA ^2_{\kc}$, where $\cD := \nu _{\ast}(\wD )$. 
\begin{lem}\label{lem(3-1)}
With the notation as above, then the following three assertions hold: 
\begin{enumerate}
\item $r \ge 3$. 
\item There exists at least one integer $e$ with $2 \le e \le r-1$ such that $m_e=1$. 
\item If $r=3$, the weighted dual graph of $\wD$ is the twig $[1,1,m]$ or $[m,1,1]$ for some $m \ge 1$. 
\end{enumerate}
\end{lem}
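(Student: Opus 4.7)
The plan is to exploit two structural principles throughout. First, since $\wV \setminus \mathrm{Supp}(\wD) \simeq \bA^2_{\kc}$ contains no complete curves, every $(-1)$-curve on $\wV$ must lie in $\wD$, and this property is inherited by every intermediate surface along $\nu$ because contracting a boundary component preserves the complement $\bA^2_{\kc}$. Second, the hypothesis $m_i \ge 1$ means every component of $\wD$ has self-intersection $\le -1$, so by Lemma \ref{Morrow} the pair $(\wV,\wD)$ cannot itself be an mnc and $\nu$ must contract something. Combining these, every step of $\nu$ is a blow-down of a $(-1)$-curve lying in the current boundary, and the process terminates only at one of the mnc shapes listed in Lemma \ref{Morrow}.

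For (1), I would dispose of $r=1$ using Lemma \ref{lem(2-2)}: $\rho_{\kc}(\wV)=1$ forces $\wV=\bP^2$ and $\wD$ a line, so $m_1=-1$, contradicting $m_1\ge 1$. For $r=2$ the mnc $(\cV,\cD)$ has $\sharp \cD\in\{1,2\}$: if $\sharp\cD=2$ then $\nu$ is trivial and $\wD=\cD$ already contains a $0$-curve by Lemma \ref{Morrow}(3); if $\sharp\cD=1$ then $\cV=\bP^2$, $\cD$ is a line, and the unique blow-up of $\nu^{-1}$ must occur at a point of the line (otherwise the exceptional would be a complete curve in $\bA^2_{\kc}$), yielding the twig $[0,1]$. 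Either subcase contradicts $m_i\ge 1$.

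For (2), I would suppose for contradiction that $m_e\ge 2$ for all $2\le e\le r-1$. Any $(-1)$-curve in $\wD$ is then an end, so $m_1=1$ or $m_r=1$; I split into Case (A), exactly one end equals $1$, and Case (B), both ends equal $1$. In Case (A) with $m_1=1$ and $m_r\ge 2$, contracting $m_1$ yields $[m_2-1,m_3,\ldots,m_r]$; the alternative $m_2\ge 3$ produces all weights $\ge 2$, precluding both a boundary $(-1)$-curve (needed to contract further) and a $0$-curve (needed for an mnc by Lemma \ref{Morrow}). Hence $m_2=2$, and iterating forces $m_2=m_3=\cdots=m_{r-1}=2$; the terminal contraction of $[1,m_r]$ leaves $[m_r-1]$ of weight $\ge 1$, violating the mnc criterion $(D)^2=1$ of Lemma \ref{Morrow}(2). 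Case (B) is treated by the same mechanism: contracting one end either reduces to Case (A) with fewer components (if some interior weight is $\ge 3$) or descends through palindromic twigs to the base $r=3$, $[1,m_2,1]$ with $m_2\ge 2$, whose iterated contractions yield $[0]$ or a single curve of weight $\ge 1$, neither of which is an mnc.

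For (3), (2) already supplies $m_2=1$. Assuming for contradiction that $m_1,m_3\ge 2$, the unique $(-1)$-curve on $\wV$ is the interior $m_2$, and contracting it gives the $2$-component twig $[m_1-1,m_3-1]$; each of the three subcases (both $\ge 3$; exactly one equal to $2$; both equal to $2$) reduces after at most two further contractions to a terminal single-component twig that satisfies neither Lemma \ref{Morrow} nor the presence of a $(-1)$-curve, producing the desired contradiction. The main obstacle is the bookkeeping in (2): one must verify at every iteration that the alternative ``$m_{k+1}\ge 3$'' is truly excluded, that intermediate twigs never accidentally coincide with an mnc shape, and that the terminal single-component twig cannot coincidentally satisfy the $(D)^2=1$ condition of Lemma \ref{Morrow}(2) under the standing weight constraints.
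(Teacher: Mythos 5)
Your proof is correct and follows essentially the same route as the paper's (very terse) argument: since every weight is at least $1$ the pair cannot be an mnc, so one follows the forced contractions of boundary $(-1)$-curves and checks against Lemma \ref{Morrow} that no reachable configuration (an admissible twig, or a single curve of non-positive self-intersection) is an mnc, which is exactly how the paper disposes of (1)--(3). One cosmetic nit: in the $r=2$, $\sharp\cD=1$ subcase the exceptional curve is itself a boundary component, so the reason the blown-up point must lie on the line is that otherwise the complement would be $\bA^2_{\kc}$ minus a point (equivalently the boundary would be disconnected, or would contain a $(+1)$-curve), not that the exceptional curve would sit inside $\bA^2_{\kc}$.
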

\begin{proof}
In (1), supposing $n \le 2$, we can easily obtain a contradiction by Lemma \ref{Morrow}. 

In (2), noticing $r \ge 3$ by (1), suppose $m_i \ge 2$ for any $i=2,\dots ,r-1$. 
Since $(\wV ,\wD )$ is not an mnc of $\bA ^2_{\kc}$, we obtain $m_1=1$ or $m_r=1$. 
Then $\wD$ can be contracted to the twig $[m]$ for some non-negative integer $m$ or an admissible twig. 
It contradicts Lemma \ref{Morrow}. 

In (3), we note $m_2=1$ by (2). 
Hence, $\wD$ can be contacted to the twig $[m_1-1,m_3-1]$. 
By Lemma \ref{Morrow}(2) and (3), we see $m_1-1=0$ or $m_3-1=0$. 
This completes the proof. 
\end{proof}
\begin{lem}\label{lem(3-2)}
With the notation as above, then the following two assertions hold: 
\begin{enumerate}
\item Assume that there exists exactly one integer $e$ with $1\le e \le r-1$ such that $m_i=1$ if and only if $i=e$ or $e+1$. 
Then we obtain $r=3$. 
\item Assume that $r \ge 4$, $m_i=m_{r+1-i}$ for any $i$ and there exists exactly one integer $e$ with $1 \le e < \frac{r}{2}$ such that $m_i=1$ if and only if $i=e$, $e+1$, $r-e$ or $r+1-e$. 
Then we obtain $r=4$. Namely, the weighted dual graph of $\wD$ is $[1,1,1,1]$. 
\end{enumerate}
\end{lem}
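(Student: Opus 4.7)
The unifying approach for both parts is to contract $(-1)$-curves in $\wD$ until reaching an mnc of $\bA^2_{\kc}$, and then to apply Morrow's classification (Lemma \ref{Morrow}) to the terminal configuration. Because an mnc with $\sharp\cD\ge 3$ must contain exactly one $0$-curve, and that $0$-curve must be adjacent to a component of positive self-intersection, any configuration in which the contractions produce a twig with no $0$-curve or with a $0$-curve lacking the required $\Gamma_+$ neighbor gives an immediate contradiction.

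For part (1), I split into the endpoint cases $e\in\{1,r-1\}$ and the interior case $2\le e\le r-2$. In the endpoint case $e=1$ (the case $e=r-1$ is symmetric), contracting $\wC_1$ reduces $m_2=1$ to $0$ and produces the twig $[0,m_3,\dots,m_r]$; since $m_i\ge 2$ for $i\ge 3$, no further $(-1)$-curve appears, so this is already an mnc, and for length $\ge 3$ its unique $0$-curve sits at an endpoint with only one neighbor (of self-intersection $\le -2$), violating Lemma \ref{Morrow}(4). Combined with $r\ge 3$ from Lemma \ref{lem(3-1)}(1), this forces $r=3$. In the interior case, contracting $\wC_e$ places a $0$-curve into the interior of the new twig $[m_1,\dots,m_{e-1}-1,0,m_{e+2},\dots,m_r]$; if $m_{e-1}\ge 3$ no new $(-1)$-curve arises, and the resulting mnc already violates Lemma \ref{Morrow}(4) because both neighbors of the interior $0$-curve have self-intersection $\le -2$. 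By the symmetric argument applied to $\wC_{e+1}$, one likewise needs $m_{e+2}=2$, so $m_{e-1}=m_{e+2}=2$. Cascading the contractions through the resulting block of $(-1)$-curves on (say) the left, each step increments the self-intersection of the former $0$-curve by one; when the cascade terminates (either at the chain's left endpoint or where the left neighbor has weight $\ge 3$), the resulting twig of length $\ge 2$ has no $0$-weighted component, again contradicting Lemma \ref{Morrow}(3) or (4). Hence the interior case is impossible and $r=3$.

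For part (2), the symmetry $m_i=m_{r+1-i}$ leaves two sub-cases according to whether the $1$-positions $\{e,e+1,r-e,r+1-e\}$ collapse to three consecutive indices ($r=2e+1$) or remain four distinct indices ($r\ge 2e+2$). In the first sub-case, which forces $e\ge 2$ and hence $r\ge 5$, contracting the middle $\wC_{e+1}$ simultaneously converts both of its $1$-weighted neighbors into $0$-curves, giving a twig of length $r-1\ge 4$ with two adjacent $0$-curves and no positive-self-intersection component, violating Lemma \ref{Morrow}(4). In the second sub-case, the sub-sub-case $e=1$ is handled by simultaneously contracting the two endpoint $(-1)$-curves $\wC_1$ and $\wC_r$, yielding $[0,m_3,\dots,m_{r-2},0]$; for $r\ge 5$ its two $0$-curves at opposite ends make Lemma \ref{Morrow}(4) unsatisfiable, so only $r=4$ with the twig $[1,1,1,1]$ survives, exactly the stated conclusion. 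The sub-sub-case $e\ge 2$ is settled by applying the cascade argument of Part (1)'s interior case symmetrically from both sides; in every branch the terminal twig has no $0$-component, again contradicting Morrow.

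The main difficulty is the cascade analysis in Part (1)'s interior case (and its symmetric analogue in Part (2)): I must verify in each branching of the contraction process that the terminal twig genuinely has no $0$-weighted component, so that Morrow's mnc criteria indeed fail. The bookkeeping through the block of $2$'s on either side requires care, but the principle is uniform -- each cascaded contraction strictly increases the self-intersection of the former $0$-curve, so once the cascade terminates no $0$-curve remains.
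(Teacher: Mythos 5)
Your proposal is correct and follows essentially the same strategy as the paper: contract $(-1)$-curves of the chain down to an mnc of $\bA^2_{\kc}$ and then use Lemma \ref{Morrow} to constrain the terminal twig, the only difference being that the paper finishes in one stroke with the counting identity $\sharp \wD - \sharp \cD = (\cC)^2-(\wC_{e+1})^2$ for the image of $\wC_{e+1}$ (resp.\ of $\wC_{e+1}$ and $\wC_{r-e}$), while you track the contraction cascade explicitly case by case; your treatment also covers the degenerate configuration $r-e=e+1$ in (2), which the paper's wording passes over. One small imprecision: in (2) with $e\ge 2$, the branch where the outer neighbors have weight $\ge 3$ terminates in a twig that does contain $0$-curves (two of them, with no positive component), so the contradiction there comes from Lemma \ref{Morrow}(4) as in the first stage of your part (1), not from the absence of a $0$-component; with that reading every branch is indeed ruled out and the argument is complete.
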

\begin{proof}
In (1), notice that $r \ge 3$ by Lemma \ref{lem(3-1)}(1). 
We may assume $e+1<r$ by symmetry. 
Moreover, we can assume that $\nu$ starts with the contraction of $\wC _e$. 
Then we see that $\sharp \cD \ge 2$, and any irreducible component of $\cD$ with self-intersection number $\ge 0$ is only $\cC := \nu _{\ast}(\wC _{e+1})$. 
Thus, we have $\sharp \cD = 2$ and $({\cC})^2=0$ by Lemma \ref{Morrow}. 
This means $r=3$ by virtue of $\sharp \wD - \sharp \cD = (\cC )^2-(\wC _{e+1})^2 =1$. 

In (2), we can assume that $\nu$ starts with the contraction of $\wC _e+\wC_{r+1-e}$. 
Then we see that $\sharp \cD \ge 2$, and any irreducible component of $\cD$ with self-intersection number $\ge 0$ is only $\cC _1 :=\nu _{\ast}(\wC _{e+1})$ or $\cC _2 :=\nu _{\ast}(\wC _{r-e})$. 
Moreover, by $m_{e+1}=m_{r-e}$ and construction of $\nu$, we obtain $(\cC _1)^2=(\cC _2)^2$. 
Thus, we have $\sharp \cD = 2$ and $(\cC _1)^2=(\cC _2)^2=0$ by Lemma \ref{Morrow}. 
This means $r=4$ by virtue of $\sharp \wD - \sharp \cD = (\cC _1)^2-(\wC _{e+1})^2+(\cC _2)^2-(\wC _{r-e})^2 =2$. 
\end{proof}
\begin{lem}\label{lem(3-4)}
With the notation as above, assume that $r = \sharp D$ is odd, $m_i=m_{r+1-i}$ for any $i$, and there exists exactly one integer $e$ with $1 \le e \le r'$ such that $m_i=1$ if and only if $i=e$ or $r+1-e$, where $r' := \frac{r+1}{2}$. 
Then the following assertions hold: 
\begin{enumerate}
\item $e \not= r'$.  
\item If $e = r'-1$, then we obtain $m_{r'}=r-2$ and $m_i=2$ for any $i=1,\dots ,r$ with $|r'-i|>1$, namely, the weighted dual graph of $\wD$ can be denoted by $[L (1;r'-2),[r-2],R (1;r'-2)]$. 
\end{enumerate}
\end{lem}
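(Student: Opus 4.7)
My plan is to reduce both statements to a single determinantal computation exploiting the $\bZ/2$-symmetry $i \mapsto r+1-i$. Since $(\wV,\wD)$ is a compactification of $\bA^2_{\kc}$, the $r$ irreducible components of $\wD$ freely generate $\Pic(\wV)$ (as in the proof of Lemma~\ref{lem(2-2)}), and the intersection form is unimodular of signature $(1,r-1)$ by the Hodge index theorem applied to the rational surface $\wV$. Hence the intersection matrix $M$ of $\wD$ satisfies $\det M = (-1)^{r-1} = 1$ since $r$ is odd. The hypothesis $m_i = m_{r+1-i}$ makes $M$ commute with the reflection, so one has the block decomposition $\det M = \det M_{\mathrm{sym}}\cdot\det M_{\mathrm{anti}}$ in the natural bases $g_i = e_i+e_{r+1-i}$ ($i<r'$), $g_{r'} = e_{r'}$, and $f_i = e_i-e_{r+1-i}$ ($i<r'$). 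A direct calculation shows that $M_{\mathrm{anti}}$ is exactly the tridiagonal intersection matrix of the half-twig $[m_1,\ldots,m_{r'-1}]$, while $M_{\mathrm{sym}}$ is $M_{\mathrm{anti}}$ bordered by a last column $(0,\ldots,0,2,-m_{r'})^t$ and last row $(0,\ldots,0,1,-m_{r'})$; the asymmetry comes from the basis $\{g_i\}$ not being orthonormal.

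For (1), where $m_{r'}=1$ and $A := [m_1,\ldots,m_{r'-1}]$ is admissible, a cofactor expansion of $\det M_{\mathrm{sym}}$ along its last row, combined with the standard identity $\det(\text{int.\ matrix of admissible twig } T \text{ of length }\ell) = (-1)^\ell d(T)$, yields
\[
  \det M \;=\; -\,d(A)\bigl(d(A) - 2\,d(\underline{A})\bigr).
\]
Since $d(A)\ge 2$ for any non-empty admissible twig, the equation $\det M = 1$ is impossible, giving the required contradiction.

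For (2), where $m_{r'-1}=1$ and $A'' := [m_1,\ldots,m_{r'-2}]$ is admissible, the analogous expansion (the extra $-1$ entry inside $M_{\mathrm{anti}}$ merely flips signs) produces
\[
  \det M \;=\; \delta\bigl(2\,d(A'') - m_{r'}\,\delta\bigr),\quad \delta := d(A'')-d(\underline{A''}).
\]
As $\delta$ is a positive integer and the product equals $1$, I read off $\delta = 1$ and $m_{r'} = 2\,d(A'') - 1$.

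To finish I will establish the combinatorial lemma that an admissible twig $T = [n_1,\ldots,n_k]$ with $d(T)-d(\underline{T})=1$ is forced to be $[\,\underbrace{2,\ldots,2}_k\,]$. Rewriting the recurrence $d(T) = n_k d(\underline{T}) - d(\underline{\underline{T}})$ (from~\cite{Fuj82}) gives $(n_k-1)d(\underline{T}) - d(\underline{\underline{T}}) = 1$; combined with the elementary monotonicity $d(\underline{T})-d(\underline{\underline{T}})\ge 1$ (proved by the same recurrence), this forces $n_k = 2$ and reduces to $\underline{T}$, completing the induction. Applied to $A''$ this gives $d(A'')=r'-1$, hence $m_{r'} = 2(r'-1)-1 = r-2$, and the equalities $m_i = 2$ for $i\ge r'+2$ follow from the symmetry $m_i = m_{r+1-i}$. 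The main obstacle is the careful sign bookkeeping in the two cofactor expansions; once the formulas for $\det M$ are in place both conclusions are immediate.
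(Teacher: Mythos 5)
Your proposal is correct, but it proves the lemma by a genuinely different route than the paper. The paper stays entirely inside the contraction machinery of \S 3: for (1) it contracts the central curve $\wC_{r'}$, notes that the resulting twig $[m_1,\dots,m_{r'-1}-1,m_{r'+1}-1,\dots,m_r]$ cannot be admissible by Lemma \ref{Morrow}, so two adjacent weights become $1$, contradicting Lemma \ref{lem(3-2)}(1); part (2) is settled by an induction on $r$ using Lemmas \ref{Morrow} and \ref{lem(3-1)} and is left to the reader. You instead use that the components of $\wD$ form a $\bZ$-basis of $\Pic (\wV _{\kc})$ (proof of Lemma \ref{lem(2-2)}), so that unimodularity of the intersection form on the rational surface $\wV _{\kc}$ together with the signature $(1,r-1)$ gives $\det M=(-1)^{r-1}=1$, and you then split $\det M$ along the $\bZ /2$-symmetry and reduce both assertions to explicit twig-determinant identities; I verified the two closed formulas $\det M=-d(A)\bigl(d(A)-2d(\underline{A})\bigr)$ and $\det M=\delta\bigl(2d(A'')-m_{r'}\delta\bigr)$, as well as the final combinatorial step that $d(T)-d(\underline{T})=1$ forces $T=[2,\dots ,2]$, and they are correct. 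Your route buys a complete written proof of (2), which the paper omits, and replaces induction by elementary arithmetic; the paper's route uses only tools it needs anyway for Proposition \ref{prop(3)} and imports no lattice-theoretic facts. A few points you should make explicit: the factorization $\det M=\det M_{\mathrm{sym}}\cdot\det M_{\mathrm{anti}}$ holds because you conjugate (write the matrix of the endomorphism in the eigenbasis of the reflection), not because of a congruence, which would instead produce the factor $4^{r'-1}$ --- your bordered matrix with entries $2$ and $1$ is exactly the conjugated block, so this is consistent but deserves a sentence; unimodularity itself is not part of the Hodge index theorem (it comes from Poincar\'e duality, or from $\wV _{\kc}$ being an iterated blow-up of a minimal rational surface), the Hodge index theorem only supplies the signature; you need $r\ge 3$ (Lemma \ref{lem(3-1)}(1)) so that $A$ is non-empty in (1); and the positivity of $\delta =d(A'')-d(\underline{A''})$ should be justified, e.g.\ by applying Fujita's inequality $d(\overline{T})<d(T)$ to the transposal, which is the same monotonicity you invoke later.
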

\begin{proof}
In (1), suppose $e=r'$. 
Noting $r \ge 5$ by Lemma \ref{lem(3-1)}, the weighted dual graph of the contraction of $\wC _{r'}$ is the twig $[m_1,\dots ,m_{r'-2},m_{r'-1}-1,m_{r'+1}-1,m_{r'+2},\dots ,m_r]$, which is not admissible by Lemma \ref{Morrow}. 
Hence, $m_{r' \pm 1}-1=1$, which contradicts Lemma \ref{lem(3-2)}(1). 

In (2), it can be shown by the induction on $r$ combined with Lemmas \ref{Morrow} and \ref{lem(3-1)}. 
Hence, it is left to the reader. 
\end{proof}
Finally, we prepare the following proposition, which will play an important role in \S \ref{6}: 
\begin{prop}\label{prop(3)}
With the notation as above, then the following assertions hold: 
\begin{enumerate}
\item Assume that there exists exactly one integer $e$ satisfying $m_e=1$. 
Then the weighted dual graph of $\wD$ can be denoted by $[A ,[1],{A}^*,[m]]$ for some admissible twig $A$. 
\item Assume that $r = \sharp \wD$ is even, $m_i=m_{r+1-i}$ for any $i$, and there exists exactly one integer $e$ with $1 \le e \le \frac{r}{2}$ such that $m_i=1$ if and only if $i=e$ or $r+1-e$. 
Then the weighted dual graph of $\wD$ can be denoted by $[{}^t({A}^*),[1],{}^tA ,A ,[1],{A}^*]$ for some admissible twig $A$. 
\item Assume that $r = \sharp \wD$ is odd, $m_i=m_{r+1-i}$ for any $i$, and there exists exactly one integer $e$ with $1 \le e \le r'$ such that $m_i=1$ if and only if $i=e$ or $r+1-e$, where $r' := \frac{r+1}{2}$. 
Then the weighted dual graph of $\wD$ can be denoted by one of the following: 
\begin{itemize}
\item $[L (1;r'-2),[r-2],R (1;r'-2)]$. 
\item $[L (m_{A};t),{}^t(\underline{{A}^*}),[1],{}^t{A},[2t+3],A ,[1],\underline{{A}^*},R(m_{A};t)]$ for some admissible twig $A$, where $m_{A}$ is as in Definition \ref{mA} and $t$ is a non-negative integer. 
\end{itemize} 
\end{enumerate}
\end{prop}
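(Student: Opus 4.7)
The plan is to prove all three parts of Proposition \ref{prop(3)} by induction on $r = \sharp \wD$, after first establishing how the adjoint operation transforms under two elementary twig moves. Using the determinantal recursion $d([B, n]) = n\, d(B) - d(\underline{B})$ and the characterization $e(A^*) = 1 - e({}^tA)$ from Definition \ref{def(3-1)}, I record two auxiliary formulas: (L1) $[A, [2]]^* = [(A^*)_1 + 1, \overline{A^*}]$; and (L2) $[\underline{A}, [n+1]]^* = [2, A^*]$ when $A = [\underline{A}, [n]]$. In each case, one computes $d(C)$ and $d(\overline{C})$ for the candidate $C$ on the right-hand side, verifies that $e(C)$ matches the required $1 - e({}^tA')$, and invokes uniqueness of the adjoint. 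These formulas correspond to the two types of blowups of $\wV$ at an intersection point flanking the unique $(-1)$-curve, so they will govern the inductive step.

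For part (1), $r \geq 4$ by Lemma \ref{lem(3-1)}(3). The base $r = 4$ is direct: contracting $\wC_e$ yields a length-$3$ twig which by Lemma \ref{lem(3-1)}(3) must be $[1, 1, m]$, forcing $m_{e-1} = m_{e+1} = 2$ and the form $[2, 1, 2, m]$. For the inductive step $r \geq 5$, contracting $\wC_e$ and noting that the cascade must terminate at an mnc (Lemma \ref{Morrow}) forces at least one of $m_{e\pm 1}$ to equal $2$; by transposing if needed I take $m_{e-1} = 2$. If $m_{e+1} \geq 3$, the contracted twig still has a unique $(-1)$-curve of length $r-1$, so the inductive hypothesis gives it the form $[A', [1], (A')^*, [m']]$, and (L1) applied to $A_1 = [A', [2]]$ lifts this to $A_2 = [A_1^*, [m_r]]$. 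If $m_{e+1} = 2$, the contraction creates two adjacent $(-1)$-curves; tracking the ensuing cascade, which successively consumes the maximal $2$-runs flanking the original $(-1)$-curve and produces a chain of positive self-intersection curves, one shows that the terminal configuration contains a $0$-curve not adjacent to any positive self-intersection component whenever $r \geq 5$, contradicting Morrow's classification and ruling out this case.

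Parts (2) and (3) exploit the symmetry $m_i = m_{r+1-i}$ to bootstrap from part (1). In part (2), contracting one of the two symmetric $(-1)$-curves produces a twig with a single $(-1)$-curve of length $r-1$ whose form is given by part (1); the symmetry of the original twig together with (L1), (L2) then forces the specific form $[{}^t(A^*), [1], {}^tA, A, [1], A^*]$ for a single admissible twig $A$. In part (3), when the two symmetric $(-1)$-curves sit adjacent to the central curve ($e = r'-1$), Lemma \ref{lem(3-4)}(2) directly yields the first form $[L(1;r'-2), [r-2], R(1;r'-2)]$; when $e < r'-1$, the central curve $[2t+3]$ together with the flanking run $R(m_A;t)$ plays the role of the terminal $[m]$ in part (1), and Remark \ref{rem(mA)} (encoding how the configuration $[[m], A, [1], \underline{A^*}, [m_A]]$ cascades to $[m, 1, 2]$) supplies the bridge from the part (1)-form of a half-twig to the second explicit form in the statement.

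The main obstacle is the $m_{e+1} = 2$ case in part (1)'s inductive step and its analogues in parts (2), (3): the cascade of contractions can pass through several $0$-curves and curves of positive self-intersection before terminating, so ruling out all non-base configurations requires careful bookkeeping. I would handle this by isolating a separate technical sublemma describing the terminal twig of the cascade in terms of the maximal consecutive $2$-runs flanking the unique $(-1)$-curve, and then invoking Lemma \ref{Morrow} to eliminate every possibility except the base case $r = 4$.
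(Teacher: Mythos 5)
Your treatment of part (1) is essentially workable (your formulas (L1)--(L2) are correct and play the role that Lemma \ref{Fujita'} plays in the paper), but the reduction of parts (2) and (3) to part (1) contains a genuine gap. You claim that contracting \emph{one} of the two symmetric $(-1)$-curves, say $\wC_e$, "produces a twig with a single $(-1)$-curve of length $r-1$ whose form is given by part (1)". This is false: the symmetric partner $\wC_{r+1-e}$ is untouched by that contraction and survives as a $(-1)$-curve, and moreover (by the adjoint duality you yourself use) at least one neighbor of $\wC_e$ is a $(-2)$-curve, so the contraction also creates a new $(-1)$-curve next to the contracted position. For instance $[2,1,2,2,1,2]$ contracts to $[1,1,2,1,2]$, which has three $(-1)$-curves. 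Thus the contracted twig never satisfies the hypothesis of part (1); it is also no longer symmetric, so neither part (2) nor part (3) applies to it inductively, and your proposed "bridge" via Remark \ref{rem(mA)} applied to a "half-twig" has nothing to stand on, since part (1) only applies to boundaries of compactifications of $\bA^2$, not to halves of one. The paper avoids this by contracting the symmetric ($\Gal$-stable) pair $\wC_e+\wC_{e'}$ simultaneously, checking via Lemma \ref{Morrow} and Lemma \ref{lem(3-2)}(2) that the resulting twig again satisfies the hypothesis of (3) with smaller $r$, and then recovering the adjoint pieces with Lemma \ref{Fujita'}; some version of this symmetric double contraction (or an equivalent device keeping the symmetry) is needed to make your induction go through.

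Two smaller points on part (1). In the subcase $m_{e-1}=m_{e+1}=2$ you propose a bespoke cascade-bookkeeping sublemma, but its stated conclusion (the terminal configuration contains a $0$-curve not adjacent to any positive curve) is not accurate in general: e.g.\ $[2,2,1,2,2]$ cascades to a chain consisting of a $(+1)$-curve and a $(-2)$-curve with no $0$-curve, so the contradiction with Lemma \ref{Morrow} arises, but not by the mechanism you describe. In fact no new sublemma is needed: after contracting $\wC_e$ the new boundary is a twig with exactly two $(-1)$-curves, which are adjacent, so Lemma \ref{lem(3-2)}(1) gives $r-1=3$, contradicting $r\ge 5$ immediately. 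Finally, since the form $[A,[1],A^*,[m]]$ is only defined up to reversal of the chain, your inductive lift via (L1) should also address the orientation in which the inductive hypothesis presents the contracted twig; this is routine bookkeeping, unlike the parts (2)--(3) issue above.
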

\begin{proof}
Each assertion can be shown by the induction on $r$ combined with Lemmas \ref{Morrow}, \ref{Fujita}, \ref{Fujita'}, \ref{lem(3-1)}, \ref{lem(3-2)} and \ref{lem(3-4)}. 
However, the proof of (3) is a bit intricate. 
Hence, we only treat (3), and the proofs of (1) and (2) are left to the reader. 
Notice $r \ge 5$ by Lemma \ref{lem(3-1)}(3). 
Assume $r=5$. Then we can easily see $[m_1,\dots ,m_5]=[2,1,3,1,2]$ by using Lemmas \ref{lem(3-1)} and \ref{lem(3-4)}. 
Assume $r=7$. Then we can easily see either $[m_1,\dots ,m_7]=[2,2,1,5,1,2,2]$ or $[m_1,\dots ,m_7]=[3,1,2,3,2,1,3]$ by using Lemmas \ref{lem(3-1)}, \ref{lem(3-2)} and \ref{lem(3-4)}, where we note $\underline{[2]^*}=\emptyset$ and $m_{[2]}=3$ because of $[2] = L(2;0)$. 
Assume $r>7$. We put $e':=r+1-e$ for simplicity. 
If $e=r'-1$, then it follows from Lemma \ref{lem(3-4)}(2). 
Thus, in what follows, we may assume $e<r'-1$. Namely, $e'-e>2$. 
Let ${B}'$ be the weighted dual graph of the contraction of $\wC _e + \wC _{e'}$ on $\wD$. 
By Lemmas \ref{Morrow} and \ref{lem(3-2)}(2), $B'$ is a non-admissible twig and further satisfies the hypothesis of (3). 
If ${B}'=[L (1;r'-3),[r-4],R (1;r'-3)]$, then $[m_1,\dots ,m_r] = [L (3;r'-4), [1],[2],[r-4],[2],[1],R (3;r'-4)]$ by the assumption $e<r'-1$, where we note $r-4=2(r'-4)+3$. 
In what follows, we assume that ${B}'=[{L}(m_{{A}'};t),{}^t(\underline{{{A}'}^*}),[1],{}^t{{A}'},[2t+3],{A}',[1],\underline{{{A}'}^*},R (m_{{A}'};t)]$ for some admissible twig ${A}'$, where $t$ is a non-negative integer. 
Hence, $[m_{r'},\dots ,m_{r-t-1}]$ can be contracted to $[[2t+3],{A}',[1],\underline{{{A}'}^*}]$. 
Since $[[2t+3],{A}',[1],\underline{{{A}'}^*}]$ can be contracted to $[2t+3,1]$ by Lemma \ref{Fujita'} (see also Remark \ref{mA}), so is $[m_{r'},\dots ,m_{r-t-1}]$. 
Hence, by using Lemma \ref{Fujita'} again, $[m_{r'},\dots ,m_{r-t-1}]=[[2t+3],A ,[1],\underline{{A}^*}]$, where $A := [m_{r'+1},\dots ,m_{e'-1}]$. 
Meanwhile, since $[m_{r'},\dots ,m_r]=[[2t+3],A ,[1],\underline{{A}^*},[m_{r-t},\dots ,m_r]]$ can be contracted to $[[2t+3,1],R (2;t)]$, we know $[m_{r-t},\dots ,m_r]=R (m_{A};t)$. 
By symmetry, we thus obtain $[m_1,\dots ,m_r]=[L (m_{A};t),{}^t(\underline{{A}^*}),[1],{}^tA ,[2t+3],A ,[1],\underline{{A}^*},R (m_{A};t)]$. 
\end{proof}
By Proposition \ref{prop(3)} combined with Lemmas \ref{Fujita} and \ref{Fujita'}, if $\wD$ satisfies the assumption of Proposition \ref{prop(3)}(1) (resp. (2), (3)), then we can take the birational morphism $\nu :\wV \to \cV$ with $\wV \backslash \Supp (\wD ) \simeq \cV \backslash \Supp (\cD ) \simeq \bA ^2_{\kc}$ such that $\cV \simeq \bF _m$ for some $m \ge 2$ (resp. $\cV \simeq \bP ^1_{\kc} \times \bP ^1_{\kc}$, $\cV \simeq \bP ^2_{\kc}$) and $\cD$ consists of the minimal resolution and a closed fiber of the structure morphism $\bF _m \to \bP ^1_{\kc}$ (resp. two fibers of first and second projection $\bP ^1 _{\kc} \times \bP ^1_{\kc} \to \bP ^1_{\kc}$, a projective line), where $\cD := \nu _{\ast}(\wD)$
\section{Boundaries of compactifications of the affine plane}\label{4}
Let $(\wV ,\wD )$ be a compactification of the affine plane $\bA ^2_{\bk}$ over $\bk$ such that $\wV$ is a smooth projective surface over $\bk$ and $\wD _{\kc}$ is an SNC-divisor on $\wV _{\kc}$ and consists of only rational curves with self-intersection number $\le -1$. 
In this section, in order to prove Theorem \ref{main}, we study the boundary $\wD$ according to the configuration of $(-1)$-curves on $\Supp (\wD _{\kc})$. 

Arguing as in the proof of {\cite[Lemma 4.3]{KT09}}, we then obtain the following result: 
\begin{lem}\label{lem(4-3)}
Let the notation be the same as above, and let $\wE _0,\wE _1,\dots ,\wE _r$ be all $(-1)$-curves in $\Supp (\wD _{\kc})$. 
Assume that $r \ge 1$,  $(\wE _0 \cdot \wE _i)=1$ for $i=1,\dots ,r$ and the union $\sum _{i=1}^r\wE _i$ is disjoint. 
Then $(\wE _0 \cdot \wD -\wE _0) \le 2$. 
\end{lem}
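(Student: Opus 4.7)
The plan is to argue by contradiction, assuming $(\wE_0 \cdot \wD - \wE_0) \ge 3$, and derive a contradiction using Morrow's classification of minimal normal compactifications of $\bA^2_\kc$ (Lemma \ref{Morrow}). The key observation is that, by the no-cycle property of $\wD_\kc$ (Lemma \ref{lem(2-1)}), any irreducible component $C$ of $\wD_\kc - (\wE_0 + \sum_{i=1}^{r}\wE_i)$ meeting $\wE_0$ must be disjoint from every $\wE_i$ with $i \ge 1$; otherwise $\wE_0 - \wE_i - C - \wE_0$ would form a $3$-cycle. Combined with the hypothesis that $\wE_0, \wE_1, \ldots, \wE_r$ exhaust the $(-1)$-curves in $\Supp(\wD_\kc)$, this forces every such $C$ to have self-intersection at most $-2$.

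I would then contract $\wE_1, \ldots, \wE_r$ in sequence; by pairwise disjointness each remains a $(-1)$-curve until its turn, and each contraction raises the self-intersection of the strict transform of $\wE_0$ by one. After all $r$ contractions, the image $\wE_0'$ has self-intersection $r - 1 \ge 0$, and its $s := (\wE_0 \cdot \wD - \wE_0) - r$ neighbors in the new boundary are the images of the $C$'s above, with self-intersections still $\le -2$. If some $(-2)$-curve elsewhere in $\wD_\kc$ is promoted to a $(-1)$-curve by these contractions, I would continue contracting; by the no-cycle observation all such cleanup contractions occur in a subtree disjoint from $\wE_0$'s neighborhood and preserve both $\wE_0'$ and its $s$ neighbors. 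One eventually obtains an SNC compactification $(\wV'', \wD'')$ of $\bA^2_\kc$ with no $(-1)$-curve in the boundary, hence a minimal normal compactification.

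Applying Lemma \ref{Morrow} to $(\wV'',\wD'')$ and case-analyzing on $\sharp \wD''$ then yields the contradiction. The case $\sharp \wD'' = 1$ forces $(\wE_0'')^2 = 1$ by Lemma \ref{Morrow}(2), hence $r = 2$ and $s = 0$, contradicting $r + s \ge 3$. In the case $\sharp \wD'' = 2$ the chain structure forces $s = 1$ and $u = 0$ (where $u$ counts components of $\wD$ disjoint from $\wE_0$), and Lemma \ref{Morrow}(3) requires one of the two components to have self-intersection $0$; since $\wE_0''$'s unique neighbor has self-intersection $\le -2$, this forces $r - 1 = 0$, i.e., $r = 1$, so $r + s = 2$, again violating the hypothesis. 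Finally, if $\sharp \wD'' \ge 3$, Lemma \ref{Morrow}(4) provides exactly two non-negative-self-intersection components $\Gamma_0, \Gamma_+$ of $\wD''$ which are adjacent along the linear chain; $\wE_0''$ coincides with one of them by self-intersection, so its chain-neighbor of non-negative self-intersection must be one of the $s$ neighbors of $\wE_0''$, contradicting the fact that all such neighbors have self-intersection $\le -2$.

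The main obstacle is the cleanup step: verifying that contracting auxiliary $(-1)$-curves (those promoted from $(-2)$-curves meeting some $\wE_i$) does not disturb the self-intersection of $\wE_0'$ or those of its $s$ neighbors. The no-cycle property of $\wD_\kc$ is precisely what insulates $\wE_0$'s neighborhood from these auxiliary contractions and keeps the valence bookkeeping transparent throughout the reduction to the minimal normal compactification.
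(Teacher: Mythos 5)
Your overall strategy coincides with the paper's (argue by contradiction, contract the $\wE_i$ adjacent to $\wE_0$, pass to a minimal normal compactification, and play Morrow's classification against the no-cycle lemma), but two steps as written do not hold up. First, you contract all of $\wE_1,\dots,\wE_r$ at the outset without knowing that each satisfies $(\wE_i\cdot\wD-\wE_i)\le 2$. Nothing in the hypotheses prevents some $\wE_i$ ($i\ge 1$) from meeting three or more boundary components, and contracting such a curve creates a point through which three or more boundary branches pass; the boundary is then no longer an SNC-divisor, so the subsequent reduction to an mnc and the appeal to Lemma \ref{Morrow} are not justified (and cannot be repaired by further contractions, which only worsen the singularity of the boundary). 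This is exactly the point the paper's proof is built around: it first contracts only those $\wE_i$ with $(\wE_i\cdot\wD-\wE_i)\le 2$ (a nonempty set, since $(\wV_{\kc},\wD_{\kc})$ is not an mnc), deduces $\sharp\cD\le 2$, and only then concludes a posteriori that every $\wE_i$ has branch number $\le 2$.

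Second, your ``insulation'' claim for the cleanup step is false for $\wE_0'$. The curves promoted to $(-1)$-curves are precisely the former neighbors (other than $\wE_0$) of the contracted $\wE_i$'s, and after the contraction they pass through the image point lying on $\wE_0'$, hence meet $\wE_0'$; contracting them increases $(\wE_0')^2$ further. So the cleanup is not confined to a subtree away from $\wE_0'$, and the equalities $(\wE_0'')^2=r-1$ that drive your case analysis (forcing $r=2$ when $\sharp\wD''=1$ and $r=1$ when $\sharp\wD''=2$) are unjustified. (What is true, by the tree structure, is that the branches through the $C_j$'s are untouched, and that any branch survivor adjacent to $\wE_0''$ still has negative self-intersection; with these one can salvage the contradiction using the inequality $(\wE_0'')^2\ge r-1$ instead of equality, but that analysis is missing from your write-up.) The paper avoids both issues by tracking only the image $\cE=\nu_{\ast}(\wE_0)$ in the mnc, noting $(\cE)^2\ge 0$ and that every component of $\cD-\cE$ has self-intersection $\le -1$ by Lemma \ref{lem(2-1)}, whence $\sharp\cD\le 2$, and then deriving the numerical contradiction from $(\cE)^2\ge(\wE_0)^2+r$.
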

\begin{proof}
Suppose that $(\wE _0 \cdot \wD -\wE _0) \ge 3$. 
We may assume that $(\wE _i \cdot \wD -\wE _i) \le 2$ for every $i=1,\dots ,r$ with $i \le r'$ and $(\wE _i \cdot \wD -\wE _i) \ge 3$ for every $i=1,\dots ,r$ with $i>r'$, where $r'$ is an integer between $0$ and $r$. 
Since $(\wV _{\kc},\wD _{\kc})$ is not an mnc of $\bA ^2_{\kc}$ by Lemma \ref{Morrow}(1), we notice $r'>0$. 
Let $\nu :\wV _{\kc} \to \cV$ be a sequence of contractions of $(-1)$-curves and subsequently (smoothly) contractible curves in $\Supp (\wD _{\kc})$ defined over $\kc$, starting with the contraction of the disjoint union $\sum _{i=1}^{r'}\wE _i$, such that the pair $(\cV ,\cD )$ is an mnc of $\bA ^2_{\kc}$, where $\cD := \nu _{\ast}(\wD _{\kc})$. 
Putting $\cE := \nu _{\ast}(\wE _0)$, we notice $\cE \not= 0$, moreover, $(\cE \cdot \cD - \cE ) \le 2$ and $(\cE )^2 \ge 0$ by Lemma \ref{Morrow}(1). 
By Lemma \ref{lem(2-1)}, any irreducible component of $\cD - \cE$ has self-intersection number $\le -1$ (if it exists at all). 
Thus, $\sharp \cD \le 2$ by Lemma \ref{Morrow}(4). 
Hence, $(\wE _i \cdot \wD - \wE _i) \le 2$ for $i=1,\dots ,r$. 
If $\sharp \cD = 2$, we obtain $(\cE )^2=0$ by Lemma \ref{Morrow}(3) and $r+1 \ge (\wE _0 \cdot \wD - \wE _0) \ge 3$ by the assumption. 
However, we then have $0 = (\cE )^2 \ge (\wE _0)^2+r \ge 1$, which is absurd. 
If $\sharp \cD = 1$, we obtain $(\cE )^2=1$ by Lemma \ref{Morrow}(2) and $r = (\wE _0 \cdot \wD - \wE _0) \ge 3$ by the assumption. 
However, we then have $1 = (\cE )^2 \ge (\wE _0)^2+r \ge 2$, which is absurd. 
\end{proof}
Next, we show the following two lemmas generalizing Lemma \ref{lem(3-2)}: 
\begin{lem}\label{lem(4-1)}
With the notation as above, assume further that $\wD _{\kc}$ contains exactly two $(-1)$-curves $\wE _1$ and $\wE _2$ and they satisfy $(\wE _1 \cdot \wE _2)=1$. 
Then the weighted dual graph of $\wD _{\kc}$ is the twig $[1,1,m]$ for some integer $m \ge 2$. 
\end{lem}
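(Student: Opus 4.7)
The plan is to mimic the strategy in the proof of Lemma~\ref{lem(4-3)}: contract $\wE_1$, pass to an mnc of $\bA^2_{\kc}$, and then use Lemma~\ref{Morrow} to pin down the structure of $\wD_{\kc}$.

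First I would apply Lemma~\ref{lem(4-3)} with $\wE_0 := \wE_1$ and $r = 1$ (the unique $(-1)$-curve meeting $\wE_1$ being $\wE_2$), and symmetrically with $\wE_0 := \wE_2$, to obtain $(\wE_i \cdot \wD - \wE_i) \le 2$ for $i = 1, 2$. In particular each $\wE_i$ has at most two neighbors in $\wD_{\kc}$, so contracting either $\wE_i$ preserves SNC. Let $\nu : \wV_{\kc} \to \cV$ be a sequence of contractions of $(-1)$-curves (and subsequently contractible curves) in $\Supp(\wD_{\kc})$ starting with $\wE_1$, such that $(\cV, \cD)$ with $\cD := \nu_{\ast}(\wD_{\kc})$ is an mnc. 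Right after the first step the image of $\wE_2$ has self-intersection $0$, and since a $0$-curve cannot be blown down, $\cE_2 := \nu_{\ast}(\wE_2)$ is nonzero with $(\cE_2)^2 \ge 0$; Lemma~\ref{Morrow} then tells us that $\cD$ is a linear chain of smooth rational curves.

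The heart of the argument is a case split according to whether $\wE_1$ is a leaf of $\wD_{\kc}$. If $\wE_1$ meets only $\wE_2$, then no $(-1)$-curve appears in the image of $\wD_{\kc}$ after contracting $\wE_1$: the image of $\wE_2$ is a $0$-curve and every other component is untouched with self-intersection $\le -2$. So $\nu$ is just this single contraction and the result is already an mnc; Lemma~\ref{Morrow}(2)--(4) combined with the presence of a $0$-curve in $\cD$ forces $\sharp \cD = 2$ with $\cD = [0, m]$ for some $m \ge 2$, so unwinding gives $\wD_{\kc} = [1, 1, m]$. If instead $\wE_1$ has a second neighbor $C_1$, then when $(C_1)^2 \le -3$ the same argument applies verbatim, while when $(C_1)^2 = -2$ the image of $C_1$ is a new $(-1)$-curve to be contracted next, and the process iterates along a chain of $(-2)$-neighbors.

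The main obstacle is controlling this $(-2)$-chain recursion. I would argue that at each stage contracting a $(-1)$-curve meeting $\cE_2$'s current image increases $(\cE_2)^2$ by $1$, and that SNC-preservation plus an iterated application of Lemma~\ref{lem(4-3)} confines the chain to two neighbors at each step. In the final mnc the only candidates for neighbors of $\cE_2$ in $\cD$ are the image of $\wE_2$'s original second neighbor $C_2$ (if any) and the image of the last contracted $(-2)$-curve's off-chain neighbors; all of these have self-intersection $\le -2$, so none can play $\Gamma_+$ in Lemma~\ref{Morrow}(4) or be a $0$-curve required by Lemma~\ref{Morrow}(3). The only Morrow outcomes that survive are $\sharp \cD = 2$ with $\cD = [0, m]$ (giving $\wD_{\kc} = [1, 1, m]$ as above) and $\sharp \cD = 1$ with $\cE_2$ a $(+1)$-curve, the latter forcing the $(-2)$-chain to consist of the single leaf $C_1$ and $\wE_2$ itself to be a leaf of $\wD_{\kc}$, so that $\wD_{\kc} = [2, 1, 1]$; this is the same unoriented twig $[1, 1, m]$ with $m = 2$, concluding the proof.
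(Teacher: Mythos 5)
Your first step (applying Lemma \ref{lem(4-3)} with $\wE_0=\wE_1$ and $r=1$, and symmetrically, to get $(\wE_i\cdot \wD-\wE_i)\le 2$) coincides with the paper, and your leaf case and the case $(C_1)^2\le -3$ are fine. The genuine gap is exactly in what you call the main obstacle, the $(-2)$-chain recursion. The control you invoke --- ``SNC-preservation plus an iterated application of Lemma \ref{lem(4-3)}'' --- does not work: Lemma \ref{lem(4-3)} is stated (and proved) under the standing hypothesis of \S 4 that every component of the boundary has self-intersection $\le -1$ and it only constrains $(-1)$-curves meeting other $(-1)$-curves. At every intermediate stage of your $\nu$ the image of $\wE_2$ has self-intersection $\ge 0$, so the lemma cannot be applied to the intermediate compactifications, and it says nothing about the branching of the original $(-2)$-curves along the chain. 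Consequently you never actually rule out that some curve along the chain (or $\wE_1$'s side of the tree more generally) is a branch point, i.e.\ you never prove that $\wD_{\kc}$ is a linear chain; likewise your assertion that in the terminal mnc all candidate neighbours of $\cE_2$ have self-intersection $\le -2$ is unproved --- a priori a surviving component could meet two of the contracted curves and end up with self-intersection $\ge 0$, and excluding this needs the no-cycle Lemma \ref{lem(2-1)} (the contracted curves form a path, so a second intersection would create a cycle), while excluding a would-be $(-1)$-curve in the terminal boundary needs Morrow, not Lemma \ref{lem(4-3)}. The correct mechanism at each step is: if the unique new $(-1)$-curve had three or more branches the process would terminate in an mnc whose boundary contains a $(-1)$-curve, contradicting Lemma \ref{Morrow}(1); this, plus Lemma \ref{lem(2-1)}, is what makes your bookkeeping ($(\cE_2)^2$ increasing by exactly one per step, no contractions on $\wE_2$'s side, terminal dichotomy $[0,m]$ versus a single $(+1)$-curve) legitimate. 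As written, those claims are assertions of what has to be proved.

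For comparison, the paper avoids this recursion entirely: it decomposes $\wD_{\kc}-(\wE_1+\wE_2)=\wD_1+\wD_2$ with $(\wD_i\cdot\wE_j)=0$ for $i\ne j$, starts $\nu$ with $\wE_1$ (resp.\ $\wE_2$) and observes that the other half $\wD_2+\wE_2$ (resp.\ $\wD_1+\wE_1$) is never touched, so its dual graph embeds in the linear chain $\cD$ of Lemma \ref{Morrow}(1) and is itself a twig; hence $\wD_{\kc}$ is a twig, and the conclusion is then immediate from the prepared twig results Lemma \ref{lem(3-2)}(1) (with Lemma \ref{lem(3-1)}(3)). If you want to keep your direct approach, you must first supply the no-branching and path arguments via Lemma \ref{Morrow}(1) and Lemma \ref{lem(2-1)}; otherwise the cleaner fix is to establish the chain structure first, as the paper does.
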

\begin{proof}
Let $\nu :\wV _{\kc} \to \cV$ be a sequence of contractions of $(-1)$-curves and subsequently (smoothly) contractible curves in $\Supp (\wD _{\kc})$ defined over $\kc$ such that the pair $(\cV ,\cD )$ is an mnc of $\bA ^2 _{\kc}$, where $\cD := \nu _{\ast}(\wD _{\kc})$. 
By Lemma \ref{lem(4-3)}, we notice $(\wE _i \cdot \wD - \wE _i) \le 2$ for $i=1,2$. 
In particular, $\nu \not= id$. 
Noting that $\wD _{\kc}$ is connected and has no cycle by Lemma \ref{lem(2-1)}, the divisor $\wD _{\kc} -(\wE _1+\wE _2)$ can be decomposed into connected components $\wD_ 1+\wD _2$ such that $(\wD _i \cdot \wE _j)=0$ for $i,j=1,2$ with $i \not= j$, where it is not necessarily $\wD _i \not= 0$ for $i=1,2$. 
Since we can assume that $\nu$ starts with the contraction of $\wE _1$ (resp. $\wE _2$), the weighted dual graph of $\nu _{\ast}(\wD _2+\wE _2)$ (resp. $\nu _{\ast}(\wD _1+\wE _1)$) is then a twig by Lemma \ref{Morrow}(1). 
In particular, the weighted dual graph of $\wD _{\kc}$ is a twig. 
Hence, this assertion follows from Lemma \ref{lem(3-2)}(1). 
\end{proof}
\begin{lem}\label{lem(4-2)}
With the notation as above, assume further that $\wD _{\kc}$ contains exactly four $(-1)$-curves $\wE _1$, $\wE _2$, $\wE _3$ and $\wE _4$, they satisfy $(\wE _1 \cdot \wE _2)=(\wE _3 \cdot \wE _4)=1$, and $\wE _1$ and $\wE _4$ lie in the same $\Gal$-orbit. 
Then the weighted dual graph of $\wD _{\kc}$ is the twig $[1,1,1,1]$. 
\end{lem}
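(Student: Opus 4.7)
The plan is to mirror the proof of Lemma \ref{lem(4-1)}, upgrading ``two adjacent $(-1)$-curves'' to ``two Galois-conjugate pairs of adjacent $(-1)$-curves'' and invoking Lemma \ref{lem(3-2)}(2) in place of Lemma \ref{lem(3-2)}(1).

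First I would pin down the Galois involution forced by the hypothesis. Pick $\tau \in \Gal$ with $\tau(\wE_1) = \wE_4$; because $\tau$ permutes the four $(-1)$-curves in $\Supp(\wD_\kc)$ preserving intersection numbers, $\tau(\wE_2)$ is a $(-1)$-curve meeting $\wE_4$ with intersection one. A short case-check, ruling out $\tau(\wE_2) \in \{\wE_1, \wE_2\}$ by observing that each such possibility forces an extra edge among the four $(-1)$-curves and hence a cycle in $\wD_\kc$ (contradicting Lemma \ref{lem(2-1)}), yields $\tau(\wE_2) = \wE_3$; the same argument applied to $\{\tau(\wE_3), \tau(\wE_4)\}$ shows that $\tau$ restricts to a graph-reversing permutation of the four $(-1)$-curves, i.e.\ swaps the pairs $(\wE_1, \wE_2) \leftrightarrow (\wE_4, \wE_3)$.

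Next I would prove the weighted dual graph of $\wD_\kc$ is a twig, by adapting the contraction strategy of Lemma \ref{lem(4-1)}. By Lemma \ref{lem(2-1)}, $\wD_\kc$ is a tree, so one can decompose $\wD_\kc - (\wE_1 + \wE_2 + \wE_3 + \wE_4) = \wD_1 + \wD_2 + \wD_3 + \wD_4$ with $\wD_j$ attached only to $\wE_j$ (some of the $\wD_j$ possibly zero). Let $\nu : \wV_\kc \to \cV$ be a sequence of SNC-preserving $(-1)$-contractions producing an mnc $(\cV, \cD)$. To choose $\nu$ to begin with any desired $\wE_i$, one first needs an analogue of Lemma \ref{lem(4-3)} giving $(\wE_i \cdot \wD_\kc - \wE_i) \le 2$ for each $i$; since Lemma \ref{lem(4-3)} as stated requires a single ``central'' $(-1)$-curve meeting all the others, one must re-run its proof in the present setting, using the chain structure of $\cD$ (Lemma \ref{Morrow}) to contradict the opposite inequality. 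With this bound in hand, starting $\nu$ at $\wE_1$ makes $\nu_*(\wD_2 + \wE_2 + \wD_3 + \wE_3 + \wE_4 + \wD_4)$ a sub-chain of $\cD$; running the same argument with each $\wE_i$ as the initial contraction shows that all of $\wD_\kc$ is a linear chain.

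Finally, write the resulting chain as $\wD_\kc = [m_1, \dots, m_r]$ with the four entries $m_i = 1$ corresponding to $\wE_1, \wE_2, \wE_3, \wE_4$. Since $\wE_1 \cdot \wE_2 = \wE_3 \cdot \wE_4 = 1$, the $1$'s form two adjacent pairs. The Galois involution of Step~1, being a nontrivial automorphism of a chain, must act by reversal, so $m_i = m_{r+1-i}$ and the $1$'s sit at $\{e, e+1, r-e, r+1-e\}$ for some $1 \le e < r/2$. These are exactly the hypotheses of Lemma \ref{lem(3-2)}(2), which then gives $r = 4$, so the weighted dual graph of $\wD_\kc$ is $[1,1,1,1]$. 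The main obstacle is the intersection bound in the middle step: adapting Lemma \ref{lem(4-3)} to a configuration without a central $(-1)$-curve requires redoing its mnc contradiction by hand.
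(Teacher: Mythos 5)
Your proposal has a genuine gap at its central step. The whole plan rests on first proving that $\wD_\kc$ is a linear chain, via the bound $(\wE_i\cdot\wD-\wE_i)\le 2$ for all four curves, but this is never actually established: you only assert that Lemma \ref{lem(4-3)} can be ``re-run'', and you yourself flag it as the main obstacle. The adaptation is not routine. Lemma \ref{lem(4-3)} needs a central $(-1)$-curve meeting all the others, and the transfer mechanism that makes the chain reduction work in Lemma \ref{lem(4-1)} (after contracting $\wE_1$ no component of the piece attached to $\wE_2$ is ever contracted, so that piece embeds into the chain $\cD$) breaks down here: $\wE_3$ and $\wE_4$ are still $(-1)$-curves after $\wE_1$ is contracted and are typically contracted later, dragging neighbouring components with them, so ``$\nu_*(\wD_2+\wE_2+\wD_3+\wE_3+\wE_4+\wD_4)$ is a sub-chain of $\cD$'' says little about the original graph. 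Moreover your decomposition of $\wD_\kc-(\wE_1+\wE_2+\wE_3+\wE_4)$ into pieces $\wD_j$ each attached to a single $\wE_j$ is not automatic: a connected component of the complement may be attached to two non-adjacent curves, say $\wE_2$ and $\wE_3$ (as in a configuration $\wE_1-\wE_2-C-\wE_3-\wE_4$), without creating any cycle. A smaller error: in Step~1, $\tau(\wE_2)\in\{\wE_1,\wE_2\}$ does not force a cycle; it only forces $(\wE_1\cdot\wE_4)=1$ resp. $(\wE_2\cdot\wE_4)=1$, which gives a path among the four curves, and indeed the target graph $[1,1,1,1]$ can be realized with $(\wE_1\cdot\wE_4)=1$. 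This last flaw is not fatal (for your final step you only need that the Galois element sending $\wE_1$ to $\wE_4$ is a nontrivial symmetry of the chain, hence the reversal), but Steps~1 and~2 together leave the core of the argument missing.

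For comparison, the paper's proof avoids both the all-$i$ bound and the chain reduction. Since $(\wV_\kc,\wD_\kc)$ is not an mnc, some $(-1)$-curve, say $\wE_1$, satisfies $(\wE_1\cdot\wD-\wE_1)\le 2$. The case $(\wE_1\cdot\wE_4)=1$ is treated first (contracting $\wE_1$ resp. $\wE_4$ shows $(\wE_2\cdot\wD-\wE_2)\le 2$ and $(\wE_3\cdot\wD-\wE_3)\le 2$, after which the roles of the pairs $(\wE_1,\wE_2)$ and $(\wE_4,\wE_3)$ can be swapped), so one may assume $(\wE_1\cdot\wE_4)=0$ and start $\nu$ with the contraction of $\wE_1+\wE_4$. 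Lemma \ref{Morrow}(3),(4) then forces $\nu_*(\wE_2)$ and $\nu_*(\wE_3)$ to be the two components of $\cD$, both $0$-curves meeting once, whence $(\wE_2\cdot\wE_3)=1$ by Lemma \ref{lem(2-1)}, and the counting as in Lemma \ref{lem(3-2)}(2) gives $\sharp\wD_\kc=4$, i.e. the twig $[1,1,1,1]$. If you wish to keep your route, the missing work is precisely an honest proof that $(\wE_i\cdot\wD-\wE_i)\le 2$ for every $i$ and that $\wD_\kc$ is a twig; as written, the proposal leaves that step open.
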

\begin{proof}
Let $\nu :\wV _{\kc} \to \cV$ be a sequence of contractions of $(-1)$-curves and subsequently (smoothly) contractible curves in $\Supp (\wD _{\kc})$ defined over $\kc$ such that the pair $(\cV ,\cD )$ is an mnc of $\bA ^2 _{\kc}$, where $\cD := \nu _{\ast}(\wD _{\kc})$. 
Since $\Supp (\wD _{\kc})$ contains a $(-1)$-curve, we notice $\nu \not= id$ by Lemma \ref{Morrow}(1). 
Hence, we may assume $(\wE _1 \cdot \wD - \wE _1) \le 2$. 

Now, suppose $(\wE _1 \cdot \wE _4)=1$. 
Then $(\wE _2 \cdot \wE _3)=0$ by Lemma \ref{lem(2-1)}. 
If $(\wE _2 \cdot \wD -\wE _2) \ge 3$ (resp. $(\wE _3 \cdot \wD -\wE _3) \ge 3$), by contracting $\wE _1$ (resp. $\wE _4$) we obtain the mnc of $\bA ^2_{\kc}$ such that the direct image $\wE _2$ (resp. $\wE _3$) is a $(-1)$-curve and is included in it boundary. 
This contradicts Lemma \ref{Morrow}(1). 
Hence, we have $(\wE _2 \cdot \wD -\wE _2) \le 2$ and $(\wE _3 \cdot \wD -\wE _3) \le 2$. 

Hence, we may assume $(\wE _1 \cdot \wE _4)=0$ in what follows. 
Indeed, if $(\wE _1 \cdot \wE _4)=1$, we swap the roles of the pairs $(\wE _1,\wE _2)$ and $(\wE _4,\wE _3)$. 
Moreover, since $\wD _{\kc}$ is connected and has no cycle, we may assume that $\wE _2$ and $\wE _3$ are included in the same connected component of $\wD _{\kc} -(\wE _1+\wE _4)$. 
Then we can assume that $\nu$ starts with $\wE _1+\wE _4$, so that $\nu _{\ast}(\wE _2)$ and $\nu _{\ast}(\wE _3)$ are curves on $\cV$ with self-intersection number $\ge 0$. 
Hence, $(\nu _{\ast}(\wE _2) \cdot \nu _{\ast}(\wE _3))=1$ by Lemma \ref{Morrow}(3) and (4). 
Meanwhile, by Lemma \ref{lem(2-1)}, we obtain $(\wE _2 \cdot \wE _3)=1$. 
Moreover, by symmetry of the weighted dual graph of $\wD _{\kc}$, we can assume further $(\nu _{\ast}(\wE _2))^2 = (\nu _{\ast}(\wE _3))^2$. 
By Lemma \ref{Morrow}(3) and (4), we then obtain $\sharp \cD = 2$ and $(\nu _{\ast}(\wE _2))^2 = (\nu _{\ast}(\wE _3))^2 = 0$. 
Thus, we obtain this assertion. 
\end{proof}
We will prove Lemma \ref{lem(6-1)}, which plays an important role in \S \ref{6}. 
In order to show this lemma, we shall prepare the following lemma: 
\begin{lem}\label{lem(4-4)}
Let the notation be the same as above, and let $\wE _1$ and $\wE _2$ be two $(-1)$-curves in $\Supp (\wD _{\kc})$ meeting transversely at a point, say $x$. 
Assume that they are not defined over $\bk$ and any $(-1)$-curve lies on the $\Gal$-orbit of either $\wE _1$ or $\wE _2$. 
Then: 
\begin{enumerate}
\item Any closed point other than $x$ on $\wV _{\kc}$, which lies in the $\Gal$-orbit of $x$, is not included in $\wE_1 \cup \wE_2$.  
\item The $\Gal$-orbit of $x$ consists of at most two closed points on $\wV _{\kc}$. 
\item The weighted dual graph of $\wD _{\kc}$ is the twig $[1,1,1,1]$. 
\end{enumerate}
\end{lem}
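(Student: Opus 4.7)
The plan is to prove the three parts in order. The main tools are the tree structure of $\wD _{\kc}$ from Lemma \ref{lem(2-1)} and the valency bound on $(-1)$-curves in $\wD _{\kc}$ from Lemma \ref{lem(4-3)}; part (3) reduces to Lemma \ref{lem(4-2)}.

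For (1), I argue by contradiction: assume $y := \sigma (x) \in \wE _1 \cup \wE _2$ with $y \ne x$; by symmetry, suppose $y \in \wE _1$. Since $\wD _{\kc}$ is SNC at $y$ and both $\sigma (\wE _1), \sigma (\wE _2)$ pass through $y$, one of them equals $\wE _1$; in either sub-case a third $(-1)$-curve $\wE _3 \ne \wE _1, \wE _2$ passes through $y$ on $\wE _1$. Lemma \ref{lem(2-1)} gives $\wE _2 \cap \wE _3 = \emptyset$ (no three-cycle), and Lemma \ref{lem(4-3)} applied to $\wE _0 = \wE _1$ forces $(\wE _1 \cdot \wD _{\kc} - \wE _1) = 2$, with $\wE _1$'s $\wD _{\kc}$-neighbors being exactly $\wE _2, \wE _3$. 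Writing $O_i$ for the $\Gal$-orbit of $\wE _i$ and $F_i := \sum _{\wE \in O_i} \wE$, $\Gal$-equivariance forces every $\wE \in O_1$ to have exactly two $\wD _{\kc}$-neighbors, both in $O_2$. Counting incidences gives $(F_1 \cdot F_2) = 2|O_1| = d|O_2|$, where $d$ is the common $O_1$-degree of each $O_2$-vertex, and Lemma \ref{lem(4-3)} applied to $\wE _0 = \wE _2$ gives $d \le 2$. Each remaining sub-case leads to a cycle contradicting Lemma \ref{lem(2-1)}: if $O_1 = O_2$ then $F_1 = F_2$ is a $2$-regular graph and hence contains a cycle; if $O_1 \ne O_2$ and $d = 2$ the bipartite incidence subgraph is $2$-regular, again a cycle; and if $O_1 \ne O_2$ and $d = 1$ the $|O_1|$ disjoint ``$V$-shapes'' must be Galois-equivariantly glued into the connected tree $\wD _{\kc}$ through the at-most-one remaining edge at each $O_2$-leaf, which forces either a cycle or the whole graph $[1,1,1]$ with $\wE _1$ the unique degree-$2$ vertex, making $\wE _1$ $\Gal$-fixed and contradicting the hypothesis that $\wE _1$ is not defined over $\bk$.

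For (2), suppose $|\Gal \cdot x| \ge 3$, so there exist distinct conjugates $y_1, y_2 \ne x$. By (1), $y_i \notin \wE _1 \cup \wE _2$, and each $y_i$ is a transverse intersection of two $(-1)$-curves in $O_1 \cup O_2$ disjoint from $\{\wE _1, \wE _2\}$. Repeating the degree-counting and cycle argument of (1), now augmented by the extra pairs of incident $(-1)$-curves coming from $y_1, y_2$ and bounded by Lemma \ref{lem(4-3)}, shows the enriched Galois-equivariant incidence configuration cannot embed in a tree.

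For (3), I first rule out $|\Gal \cdot x| = 1$: if so, every $\sigma \in \Gal$ stabilizes $\{\wE _1, \wE _2\}$ as an unordered pair, so $O_1 \cup O_2 \subseteq \{\wE _1, \wE _2\}$ and only two $(-1)$-curves exist. Lemma \ref{lem(4-1)} then gives the dual graph $[1,1,m]$, in which $\wE _1$ and $\wE _2$ have different valencies and thus cannot be $\Gal$-swapped, forcing both to be $\Gal$-fixed and contradicting the hypothesis. Combined with (2), $|\Gal \cdot x| = 2$ exactly, so $\Gal \cdot x = \{x, y\}$ with $y \notin \wE _1 \cup \wE _2$; picking $\sigma \in \Gal$ with $\sigma (x) = y$, the four curves $\wE _1, \wE _2, \wE _3 := \sigma (\wE _1), \wE _4 := \sigma (\wE _2)$ are distinct $(-1)$-curves with $(\wE _1 \cdot \wE _2) = (\wE _3 \cdot \wE _4) = 1$ and $\wE _1$, $\wE _3$ in the same $\Gal$-orbit. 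Any further $(-1)$-curve $\wE _5 = \rho (\wE _i)$ would give $\rho (x) \in \{x, y\}$ by (2), and SNC at $x$ and $y$ would then force $\wE _5 \in \{\wE _1, \wE _2, \wE _3, \wE _4\}$, so $\wD _{\kc}$ has exactly four $(-1)$-curves. Relabeling (swapping $\wE _3, \wE _4$) so that $\wE _1$ and $\wE _4$ lie in the same $\Gal$-orbit, Lemma \ref{lem(4-2)} gives the weighted dual graph $[1,1,1,1]$.

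The main obstacle is the degree-counting in part (1), especially the $d = 1$ sub-case, where the Galois-equivariant gluing of the $V$-shaped subtrees into the connected tree $\wD _{\kc}$ requires careful analysis to rule out every admissible configuration.
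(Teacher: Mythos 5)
There is a genuine gap, and it sits exactly where you flag it. Your parts (1) and (2) rest on applying Lemma \ref{lem(4-3)} to $\wE _0=\wE _1$ (and again to $\wE _0=\wE _2$) to get $(\wE _1\cdot \wD -\wE _1)=2$ and the bound $d\le 2$. But the hypotheses of Lemma \ref{lem(4-3)} are that $\wE _0,\wE _1,\dots ,\wE _r$ are \emph{all} the $(-1)$-curves of $\Supp (\wD _{\kc})$, that $\wE _0$ meets every one of the others, and that the others form a disjoint union; none of this is available here (for instance $\wE _1$, being non-$\bk$-rational, has a conjugate $\wE _4$ which it need not meet, so $\wE _1$ certainly does not meet all other $(-1)$-curves). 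The only bound one gets for free is the weaker statement that $(\wE _1\cdot \wD -\wE _1)\le 2$ \emph{or} $(\wE _2\cdot \wD -\wE _2)\le 2$, coming from Lemma \ref{Morrow}(1) (the pair is not an mnc) and Galois-invariance of branch numbers; the paper's proof accordingly has to treat the case $(\wE _1\cdot \wD -\wE _1)\ge 3$ by a separate argument, a case your route simply skips.

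The second, more structural problem is that the purely combinatorial tree/degree-counting argument cannot produce the contradiction in (1) and (2), because $\wD _{\kc}$ also contains components of self-intersection $\le -2$ which may link the $(-1)$-curves. For example, two Galois-conjugate ``V-shapes'' $\wE _2$--$\wE _1$--$\wE _3$ and $\gamma (\wE _2)$--$\gamma (\wE _1)$--$\gamma (\wE _3)$, joined at two leaves by a chain of $(-2)$-curves, is a tree, is SNC, respects all your degree bounds and orbit conditions, and has an orbit of $x$ of size $4$; so nothing in ``no cycle plus valency $\le 2$ plus equivariance'' rules it out. What kills such configurations in the paper is the mnc machinery: contract a suitable disjoint $\Gal$-stable union of $(-1)$-curves (e.g.\ $\wE _1+\wE _4$, or $\wE _2+\wE _3+\wE _5$, or $\wE _1+\wE _3+\wE _5$ in part (2)) and observe that the image boundary acquires three components of self-intersection $\ge 0$ (or two of self-intersection $\ge 1$), contradicting Lemma \ref{Morrow}(2)--(4). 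You never invoke these contraction arguments in (1)--(2), and you yourself concede that the $d=1$ sub-case is not carried out; that is precisely the content that is missing. Your part (3) is fine and matches the paper (exclude $\bk$-rationality of $x$ via Lemma \ref{lem(4-1)}, then conclude with Lemma \ref{lem(4-2)}), but it only works once (1) and (2) are genuinely established.
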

\begin{proof}
By the assumption and Lemma \ref{Morrow}(1), we note $(\wE _1 \cdot \wD -\wE _1) \le 2$ or $(\wE _2 \cdot \wD -\wE _2) \le 2$. 

In (1), suppose that there exists a closed point $y$ other than $x$ on $\wE _1 \cup \wE _2$ lying on the $\Gal$-orbit of $x$. 
Without loss of generality, we may assume $y \in \wE _1 \backslash \{ x\}$. 
Then there exists a $(-1)$-curve $\wE _3$ other than $\wE _2$ in $\Supp (\wD _{\kc})$ meeting $\wE _1$ such that $\wE _2$ and $\wE _3$ lie in the same $\Gal$-orbit. 
Since $\wE _1$ is not defined over $\bk$, there exists a $(-1)$-curve $\wE _4$ in $\Supp (\wD _{\kc})$ other than $\wE _1$ such that $\wE _1$ and $\wE _4$ lie in the same $\Gal$-orbit. 
By Lemma \ref{lem(2-1)}, there exists a $(-1)$-curve $\wE _5$ in $\Supp (\wD _{\kc})$ meeting $\wE _4$ such that $\wE _2$ and $\wE _5$ lie in the same $\Gal$-orbit, $\wE _5 \not= \wE _2,\wE _3$ and $\wE _2+\wE _3+\wE _5$ is disjoint. 
Now, if $(\wE _1 \cdot \wD -\wE _1) \le 2$, by noting $(\wE _1 \cdot \wE _4)=0$, the direct image of $\wD _{\kc}$ by the contraction of $
\wE _1+\wE _4$ contains at least three irreducible components with self-intersection number $\ge 0$. It contradicts Lemma \ref{Morrow}(4). 
In what follows, we treat the case $(\wE _1 \cdot \wD -\wE _1) \ge 3$.
Then notice $(\wE _2 \cdot \wD -\wE _2) \le 2$. 
Since $\wE _1$ meets $\wE _2$ and $\wE _3$, there exists a $(-1)$-curve $\wE _6$ in $\Supp (D_{\kc})$ meeting $\wE _4$ such that $\wE _2$ and $\wE _6$ lie in the same $\Gal$-orbit, and $\wE _6 \not= \wE _2,\wE _3,\wE _5$. 
If $\wE _2+\wE _3+\wE _5+\wE _6$ is disjoint, the direct image of $\wD _{\kc}$ by the contraction of $\wE _2+\wE _3+\wE _5+\wE _6$ contains at least two irreducible components with self-intersection number $\ge 1$. 
Otherwise, by noticing $(\wE _6 \cdot \wE _2+\wE _3)=1$, the direct image of $\wD _{\kc}$ by the contraction of $\wE _2+\wE _3+\wE _5$ contains at least three irreducible components with self-intersection number $\ge 0$. 
Both cases contradict Lemma \ref{Morrow}. 

In (2), let $x_1,\dots ,x_r$ be all closed points lying on the $\Gal$-orbit of $x$. 
By (1) and the assumption, $\Supp (\wD _{\kc})$ contains exactly $2r$-times of $(-1)$-curves $\wE _1,\dots ,\wE _{2r}$. 
We may assume that $(\wE _1 \cdot \wD -\wE _1) \le 2$ and $x_i$ lies on $\wE _{2i-1}$ and $\wE _{2i}$ for $i=1,\dots ,r$. 
Suppose $r \ge 3$. 
Then we may assume that $\wE _3$ and $\wE _5$ lie on the $\Gal$-orbit. 
If $\wE _1+\wE _3+\wE _5$ is disjoint, the direct image of $\wD _{\kc}$ by the contraction of $\wE _1+\wE _3+\wE _5$ contains at least three irreducible components with self-intersection number $\ge 0$. It contradicts \ref{Morrow}(4). 
Otherwise, we may assume $(\wE _1 \cdot \wE _3)=1$. 
Then $(\wE _1 \cdot \wE _5)=0$ since $\wE _1$ meets only $\wE _2$ and $\wE _3$ on $\Supp (\wD _{\kc})$, so that the direct image of $\wD _{\kc}$ by the contraction of $\wE _1+\wE _5$ contains at least three irreducible components with self-intersection number $\ge 0$. It contradicts Lemma \ref{Morrow}. 

In (3), note that the closed point $x$ on $\wV _{\kc}$ is not defined over $\bk$. 
Indeed, otherwise, by the assumption, all $(-1)$-curves in $\Supp (\wD _{\kc})$ are only $\wE _1$ and $\wE _2$, moreover, they lie in the same $\Gal$-orbit. However, it is impossible by Lemma \ref{lem(4-1)}. 
Hence, $(V,D)$ satisfies the assumption of Lemma \ref{lem(4-2)} by (1) and (2). 
Thus, this assertion follows from Lemma \ref{lem(4-2)}. 
\end{proof}
\begin{lem}\label{lem(6-1)}
Let the notation be the same as above, and let $\wE _1,\dots ,\wE _r$ be all $(-1)$-curves in $\Supp (\wD _{\kc})$. 
Assume that they lie in the same $\Gal$-orbit and the union $\wE := \sum _{i=1}^r\wE _i$ is disjoint. 
Hence, we obtain the contraction $\nu ':\wV \to \wV '$ of $\wE$ defined over $\bk$ by Lemma \ref{Morrow}(1). 
Put $\wD ' := \nu '_{\ast}(\wD)$. 
Then one of the following proprieties holds: 
\begin{enumerate}
\item $(\wV '_{\kc},\wD '_{\kc})$ is an mnc of $\bA ^2_{\kc}$. 
\item The weighted dual graph of $\wD '_{\kc}$ is the twig either $[1,1,m]$ for some $m \ge 1$ or $[1,1,1,1]$. 
\item Letting $\wE _1',\dots ,\wE _{r'}'$ be all $(-1)$-curves on $\Supp (\wD _{\kc}')$, they lie in the same $\Gal$-orbit and the union $\sum _{i=1}^{r'}\wE _i'$ is disjoint.
\end{enumerate}
\end{lem}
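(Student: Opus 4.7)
The plan is to examine the $(-1)$-curves of $\wD '_{\kc}$ and exploit the structure imposed by the transitive Galois action on $\{ \wE _1,\dots ,\wE _r\}$. I start by noting that every new $(-1)$-curve in $\wD '_{\kc}$ is the birational image $\nu '_{\ast}(C)$ of a component $C$ of $\wD _{\kc}-\wE$ satisfying $(C)^2+k=-1$, where $k := \sharp \{ i : (C\cdot \wE _i)=1\} \ge 1$; the inequality holds because $C$ is not itself a $(-1)$-curve, so $(C)^2 \le -2$. Since the Galois group commutes with $\nu '$ and acts transitively on the $\wE _i$'s, it permutes these new $(-1)$-curves compatibly with the permutation of $\{ \wE _1,\dots ,\wE _r\}$.

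I would first dispose of the case where the new $(-1)$-curves are pairwise disjoint. Two such curves whose preimages both met a common $\wE _i$ would have images intersecting at $\nu '(\wE _i)$, violating disjointness, so each $\wE _i$ has at most one neighbor in $\wD _{\kc}$ whose image is a new $(-1)$-curve. Galois transitivity then forces the subset of $\wE _i$'s admitting such a neighbor to be either all of $\{ \wE _1,\dots ,\wE _r\}$ or empty. In the first case the new $(-1)$-curves biject with $\{ \wE _1,\dots ,\wE _r\}$ and inherit the single Galois orbit structure, placing us in case (3); in the second $r'=0$, and I would verify that $\wD '_{\kc}$ is SNC --- which reduces to showing that each $\wE _i$ has at most two neighbors in $\wD _{\kc}$, since otherwise contracting $\wE _i$ would produce a triple point --- and conclude that $(\wV '_{\kc},\wD '_{\kc})$ is an mnc, yielding case (1).

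Next, suppose some pair $\wE '_a,\wE '_b$ of new $(-1)$-curves satisfies $(\wE '_a \cdot \wE '_b)=1$. Their intersection point is either an original crossing point of the preimages $C_a,C_b$ or a point $\nu '(\wE _i)$ for which both $C_a,C_b$ are neighbors of $\wE _i$. If $r'=2$, I invoke Lemma \ref{lem(4-1)} to conclude dual graph $[1,1,m]$. If $r' \ge 3$, I verify the hypotheses of Lemma \ref{lem(4-4)}: by Galois transitivity on $\{ \wE _1,\dots ,\wE _r\}$ together with the symmetric pairing of meeting preimages, all new $(-1)$-curves lie in the $\Gal$-orbit of $\wE '_a$ or of $\wE '_b$, and they cannot be defined over $\bk$ (otherwise $\wE _i$ itself would be, forcing $r=1$, an easy separate case). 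Lemma \ref{lem(4-4)}(3) then yields dual graph $[1,1,1,1]$, placing us in case (2).

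The main obstacle I expect concerns the SNC verification in case (1) and the ruling out of ``star'' configurations in which some $\wE _i$ has three or more neighbors whose images are $(-1)$-curves meeting at $\nu '(\wE _i)$: such configurations would produce a triple point incompatible with the linear-chain structure of all three conclusions, yet do not match the exact hypotheses of Lemmas \ref{lem(4-1)} and \ref{lem(4-4)}. I would handle this by descending further from $\wV '_{\kc}$ to an mnc of $\bA ^2_{\kc}$ through successive $(-1)$-curve contractions and combining Lemma \ref{Morrow}(1) (the mnc dual graph is a linear chain) with Lemma \ref{lem(4-3)} at the intermediate stages to bound the number of $(-1)$-curves that can accrue at a common point.
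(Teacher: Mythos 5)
There is a genuine gap in your treatment of the case where two new $(-1)$-curves meet and $r'\ge 3$. You dismiss the possibility that one of the meeting curves is defined over $\bk$ by asserting that this would force some $\wE_i$ to be defined over $\bk$ and hence $r=1$; this is a non-sequitur and in fact false. A component $C$ of $\wD_{\kc}-\wE$ defined over $\bk$ with $(C)^2=-1-r$ can meet every $\wE_i$, and then $\nu'_{\ast}(C)$ is a $\bk$-rational $(-1)$-curve meeting the other new $(-1)$-curves even though $r\ge 2$. A concrete instance is $\wD_{\kc}=[2,1,3,1,2]$ (graph (9) of Appendix \ref{9} with $t=0$): here $r=2$, and after contracting $\wE_1+\wE_2$ one gets $\wD'_{\kc}=[1,1,1]$ with the middle curve defined over $\bk$, so $r'=3$ and the new $(-1)$-curves are not disjoint. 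In this situation Lemma \ref{lem(4-4)} is inapplicable (its hypothesis that the two meeting curves are not defined over $\bk$ fails), and its conclusion $[1,1,1,1]$ is simply wrong — the correct output is conclusion (2) with $m=1$. The paper handles precisely this by splitting on whether a $(-1)$-curve through $x_1:=\nu'(\wE_1)$ is defined over $\bk$: in the $\bk$-rational case it contains all the $x_i$, Lemma \ref{lem(4-3)} gives $(\wE_1'\cdot \wD'-\wE_1')\le 2$, hence $r\le 2$, and one derives the twigs $[1,1,m]$ and $[1,1,1]$ via Lemma \ref{lem(4-1)} and Lemma \ref{Morrow}. Your case split by disjointness, as written, loses this branch entirely. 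Moreover, even in the branch where neither meeting curve is $\bk$-rational, your verification of the orbit hypothesis of Lemma \ref{lem(4-4)} (every $(-1)$-curve lies in the $\Gal$-orbit of one of the two) needs that at most two $(-1)$-curves of $\wD'_{\kc}$ pass through each $x_i$ and that the meeting point can be taken to be such an $x_i$; neither is established in your sketch.

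This feeds into the second, acknowledged gap: the bound $(\wE_i\cdot\wD-\wE_i)\le 2$, which you defer as "the main obstacle" with only a vague plan of descending to an mnc. This bound is not just needed for SNC-ness of $\wD'_{\kc}$ in case (1); it is the backbone of the whole argument (it is what caps the number of $(-1)$-curves through each $x_i$, which organizes the paper's case analysis). It also has a short proof you should make explicit: since $\Supp(\wD_{\kc})$ contains a $(-1)$-curve, $(\wV_{\kc},\wD_{\kc})$ is not an mnc by Lemma \ref{Morrow}(1), so some $(-1)$-curve has branch number $\le 2$; as all the $\wE_i$ lie in one $\Gal$-orbit and $\wD_{\kc}$ is $\Gal$-stable, they all do. With that bound in hand, the cleanest repair of your argument is to re-organize the cases as the paper does, by the number (0, 1 or 2) of $(-1)$-curves of $\wD'_{\kc}$ through $x_1$ and, in the "two" case, by whether one of them is defined over $\bk$.
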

\begin{proof}
By the assumption, each $x_i := \nu _{\kc}'(E_i)$ is contained in the same $\Gal$-orbit for $i=1,\dots ,r$. 
Moreover, $x_1$ lies in at most two $(-1)$-curves in $\Supp (\wD _{\kc}')$ since $(\wE _1 \cdot \wD - \wE _1) \le 2$. 

Assume that $x_1$ lies in no $(-1)$-curve in $\Supp (\wD _{\kc}')$. 
Then $\Supp (\wD _{\kc}')$ contains no $(-1)$-curve. 
Hence, $(\wV '_{\kc},\wD '_{\kc})$ is an mnc of $\bA ^2_{\kc}$. 

Assume that $x_1$ lies in exactly two $(-1)$-curves, say $\wE _1'$ and $\wE _2'$, in $\Supp (\wD _{\kc}')$. 
If $\wE _1'$ and $\wE _2'$ are not defined over $\bk$, the weighted dual graph of $\wD '_{\kc}$ is the twig $[1,1,1,1]$ by Lemma \ref{lem(4-4)}. 
In what follows, we thus consider the case that $\wE _1'$ is defined over $\bk$. 
Then $x_1,\dots ,x_r \in \wE _1'$ since these points lie in the same $\Gal$-orbit. 
Thus, $(\wE _1' \cdot \wD ' - \wE _1') \le 2$ by Lemma \ref{lem(4-3)}. In particular, $r \le 2$. 
If $r=1$, then $\Supp (\wD _{\kc}')$ contains only two $(-1)$-curves $\wE _1'$ and $\wE _2'$. 
Hence, the weighted dual graph of $\wD _{\kc}'$ is the twig $[1,1,m]$ for some $m \ge 2$ by Lemma \ref{lem(4-1)}. 
If $r=2$, letting $\nu '':\wV _{\kc}' \to \wV ''$ be the contraction of $\wE _1'$ over $\kc$, then $(\wV '',\wD '')$ is an mnc of $\bA ^2_{\kc}$. 
Moreover, $\Supp (\wD '')$ contains exactly two $0$-curves, so that the weighted dual graph of $\wD ''$ is the twig $[0,0]$ by Lemma \ref{Morrow}(4). 
Thus, the weighted dual graph of $\wD _{\kc}'$ is the twig $[1,1,1]$. 

Assume that $x_1$ lies in exactly one $(-1)$-curve in $\Supp (\wD _{\kc}')$. 
Let $\wE _1',\dots ,\wE _{r'}'$ be all $(-1)$-curves on $\Supp (\wD _{\kc}')$. 
Then each $x_i$ lies in exactly one $(-1)$-curve in $\Supp (\wD _{\kc}')$ for $i=1,\dots ,r$, so that $\wE _1',\dots ,\wE _{r'}'$ lie in the same $\Gal$-orbit. 
In what follows, we will show that the union $\sum _{i=1}^{r'}\wE _i'$ is disjoint.
Suppose that $\wE _1'$ and $\wE _2'$ transversely meet at a point, say $x$. 
By the assumption, we obtain $x \not= x_i$ for any $i=1,\dots ,r$. 
Meanwhile, we obtain $r'>2$. 
Indeed, otherwise the weighted dual graph of $\wD _{\kc}'$ is the twig $[1,1,m]$ for some $m \ge 2$ by Lemma \ref{lem(4-1)}, however, it contradicts that $\wE _1'$ and $\wE  _2'$ lie in the same $\Gal$-orbit. 
Noticing that the $\Gal$-orbit of $\wE _1'+\wE _2'$ has no cycle by Lemma \ref{lem(2-1)}, we thus have $r' \ge 4$ and the union $\sum _{i=1}^{r'}\wE _i'$ is not connected. 
Hence, we may assume $(\wE _3' \cdot \wE _4')=1$. 
Now, note $(\wE _1'+\wE _2' \cdot \wE _3'+\wE _4')=0$. 
On the other hand, since $\wD _{\kc}'$ is connected, we may assume that there exists a connected divisor $\wD _{1,3}'$ on $\wV _{\kc}'$ such that $\Supp (\wE _1' + \wE _3') \subseteq \Supp (\wD _{1,3}') \subseteq \Supp (\wD _{\kc}')$. 
Moreover, since $\wE _1'+\wE _2'$ and $\wE _3'+\wE _4'$ lie in the same $\Gal$-orbit, there exists a connected divisor $\wD _{2,4}'$ lying the $\Gal$-orbit of $\wD _{1,3}'$ such that $\Supp (\wE _2' + \wE _4') \subseteq \Supp (\wD _{2,4}') \subseteq \Supp (\wD _{\kc}')$. 
Then $\wD _{1,3}'+\wD _{2,4}'$ has a cycle. 
This contradicts Lemma \ref{lem(2-1)}. 
Thus, $\sum _{i=1}^{r'}\wE _i'$ is a disjoint union. 
\end{proof}
\section{Proof of Theorem \ref{main}(1) and (2)}\label{5}
Let $(S,\Delta )$ be a compactification of $\bA ^2_{\bk}$ over $\bk$. 
Assume that $S$ is a numerical del Pezzo surface of rank one over $\bk$ such that $\Sing (S_{\kc}) \not= \emptyset$. 
Let $\{ C_i\}_{1 \le i \le n}$ be all irreducible components of the divisor $\Delta _{\kc}$ on $S_{\kc}$. 
By Lemma \ref{lem(2-2)}, $n=\rho _{\kc}(S_{\kc})$, and $C_1,\dots ,C_n$ lie in the same $\Gal$-orbit. 
Let $\sigma : \wS \to S$ be the minimal resolution defined over $\bk$, let $\wDelta$ be the divisor on $\wS$ defined by $\wDelta := \sigma ^{\ast}(\Delta )_{\red}$ and let $\wC _i$ be the proper transform of $C_i$ by $\sigma _{\kc}$ for $i=1,\dots ,n$. 
Let $\mu :\hS \to \wS$ be the composite of the shortest sequence of blow-ups such that $\hDelta _{\kc} = \mu _{\kc}^{\ast}(\wDelta _{\kc})_{\red}$ is an SNC-divisor (see also {\cite[Lemma 4.1]{KT09}}). 
Notice that $\mu$ is defined over $\bk$. 
Hence, the compactification $(\hS ,\hDelta )$ of the affine plane is defined over $\bk$. 
Now, let $\hDelta _{\mu}$ be the reduced exceptional divisor of $\mu _{\kc}$, let $\hDelta _{\sigma}$ be the proper transform of the reduced exceptional divisor of $\sigma _{\kc}$ by $\mu _{\kc}$, and let $\hC _i$ be the proper transform of $\wC _i$ by $\mu _{\kc}$ for $i=1,\dots ,n$. 
Namely, $\hDelta _{\kc} = \hDelta _{\mu} + \hDelta _{\sigma} + \sum _{i=1}^n\hC _i$. 
The purpose of this section is to prove Theorem \ref{main}(1) and (2). 
For these assertions, the case of $n=1$ is mostly based on the argument of {\cite{Ko01,KT09}}, on the other hand, in order to deal with the case of $n \ge 2$, we need to observe the behavior of the Galois group $\Gal$ acting naturally on $\hS _{\kc}$. 
Hence, we shall treat such observations in \S \S \ref{5-1}, and we shall show Theorem \ref{main}(1) and (2) in \S \S \ref{5-2} and \S \S \ref{5-3}, respectively. 
\subsection{Some observations in case of $n \ge 2$}\label{5-1}
With the same notation as above, assume further that $n \ge 2$. 

By using Lemma \ref{Morrow} and construction of $\mu$, the following lemma is obvious: 
\begin{lem}\label{easy lem}
With the notation and assumptions as above, the following two assertions hold: 
\begin{enumerate}
\item Any irreducible component of $\hDelta _{\sigma}$ (resp. $\hDelta _{\mu}$) has self-intersection number $\le -2$ (resp. $\le -1$). 
\item Assume that $\mu \not= id$. 
Then, $\hDelta _{\mu}$ contains at least one $(-1)$-curve. 
Moreover, any $(-1)$-curve $\hE$ on $\hDelta _{\mu}$ does not meet any $(-1)$-curve on $\hDelta _{\mu}$, and satisfies both $(\hE \cdot \hDelta - \hE ) \ge 3$ and $\sum _{i=1}^n(\hE \cdot \hC _i)>0$. 
\end{enumerate}
\end{lem}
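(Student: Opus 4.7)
My plan is to extract all four conclusions from the minimality of $\sigma$ and $\mu$, reducing them to elementary facts about sequences of blow-ups of smooth points on smooth surfaces; all statements are Galois invariant so the arguments may be carried out over $\kc$. Write $\mu _{\kc}=\nu _1 \circ \cdots \circ \nu _k$ as a composition of single blow-ups with $\nu _1$ performed first, and denote by $E_j$ the exceptional curve of $\nu _j$. For assertion (1): since $\sigma$ is the \emph{minimal} resolution, no irreducible exceptional curve of $\sigma _{\kc}$ is a $(-1)$-curve, so each is a $(-m)$-curve with $m\ge 2$; taking strict transforms under $\mu _{\kc}$ only further decreases self-intersection, so the same bound persists for every component of $\hDelta _{\sigma}$. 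Each $E_j$ is a $(-1)$-curve at birth, and any subsequent blow-up on a strict transform of it decreases its self-intersection by one, so every component of $\hDelta _{\mu}$ has self-intersection $\le -1$.

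For assertion (2), existence is immediate from $\mu\neq id$: the curve $E_k$ is untouched after its creation, hence remains a $(-1)$-curve in $\hDelta _{\mu}$. Fix now a $(-1)$-curve $\hE=E_i$ in $\hDelta _{\mu}$. That $\hE$ is still a $(-1)$-curve on $\hS$ says exactly that no later $\nu _j$ ($j>i$) has its centre on the strict transform of $\hE$; in particular all the blow-ups after $\nu _i$ take place away from $\hE$, and one may reorder $\mu$ so that $\nu _i$ is performed last. Two things follow. First, if another $(-1)$-curve $\hE'$ in $\hDelta _{\mu}$ met $\hE$, then whichever of the two was born later would have been created by blowing up a point on the strict transform of the other, forcing that other one to be a $(-m)$-curve with $m\ge 2$, a contradiction. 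Second, with $\nu _i$ placed at the very end, the contraction of $\hE$ is exactly the inverse of $\nu _i$; if one had $(\hE\cdot\hDelta-\hE)\le 2$, then since $\hDelta$ has no cycle by Lemma~\ref{lem(2-1)}, the direct image of $\hDelta-\hE$ is again SNC (a $(-1)$-curve meeting an acyclic SNC divisor at at most two distinct points contracts to an SNC divisor), contradicting the shortness of $\mu$. Hence $(\hE\cdot\hDelta-\hE)\ge 3$.

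Finally, the bound $\sum _{i=1}^n(\hE\cdot \hC _i)>0$ rests on the fact that the reduced exceptional divisor of the minimal resolution $\sigma _{\kc}$ of the log canonical surface singularities of $S_{\kc}$ is already SNC on $\wS _{\kc}$ (cf.\ {\cite[Lemma 4.1]{KT09}}). Consequently every non-SNC point of $\wDelta _{\kc}$ lies on at least one $\wC _i$, and inductively each blow-up centre of $\mu$ sits on the current strict transform of some $\wC _i$; in particular, a $(-1)$-curve $\hE$ in $\hDelta _{\mu}$, being terminal in its branch of the blow-up tree (no later blow-up has centre on $\hE$), must meet some $\hC _i$. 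The only real subtlety I foresee is precisely this SNC-ness of the exceptional divisor of $\sigma _{\kc}$ under the log canonical hypothesis; once that is granted, everything else reduces to the elementary book-keeping on the blow-up tree sketched above together with the ``shortest-sequence-to-SNC'' principle for $\mu$.
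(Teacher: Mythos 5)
Your treatment of (1) and of the first three claims of (2) is essentially correct, and it is in fact more detailed than the paper, which dismisses the whole lemma as obvious from Lemma~\ref{Morrow} and the construction of $\mu$. In particular, your derivation of $(\hE \cdot \hDelta - \hE) \ge 3$ from the minimality of $\mu$ (reorder so that the blow-up creating $\hE$ comes last, then use Lemma~\ref{lem(2-1)} to exclude a single component meeting $\hE$ twice, so that contracting $\hE$ keeps the divisor SNC) is the right argument, as is the bookkeeping showing that two $(-1)$-curves of $\hDelta_{\mu}$ cannot meet. One small slip in (1): minimality of $\sigma$ only excludes $(-1)$-curves, i.e.\ \emph{smooth rational} curves of self-intersection $-1$, so ``each is a $(-m)$-curve with $m\ge 2$'' is not immediate; a $\sigma$-exceptional component could a priori be a singular rational curve of self-intersection $-1$, and the conclusion for $\hDelta_{\sigma}$ then needs the extra remark that $\mu$ blows up its singular point, dropping the self-intersection of its proper transform below $-2$ (irrational components are excluded because all components of $\wDelta_{\kc}$ are rational).

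The genuine gap is in the last inequality $\sum_{i=1}^n(\hE\cdot\hC_i)>0$, exactly at the point you yourself flag. You ground it in the SNC-ness of the exceptional divisor of $\sigma_{\kc}$, quoted as a consequence of the log canonical hypothesis; but the lemma is stated under the standing assumptions of \S\ref{5}, where $S$ is only a numerical del Pezzo surface of rank one with $\Sing(S_{\kc})\neq\emptyset$ and $n\ge 2$ --- log canonicity enters only in Theorem~\ref{main}(3). Since Lemma~\ref{easy lem} is used to prove Theorem~\ref{main}(1) and (2) in that generality (and SNC-ness of $\wDelta_{\kc}$ is precisely the \emph{conclusion} of Theorem~\ref{main}(1), so it cannot be fed in), your argument does not establish the statement as posed. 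Moreover, even granting that the $\sigma$-exceptional divisor is SNC, the inductive assertion that every centre of $\mu$ stays on the current strict transform of some $\wC_i$ is not automatic: after a blow-up, new non-SNC points can appear on the new exceptional curve away from the $\wC_i$ (for instance a triple point formed by two $\sigma$-exceptional branches and the new curve, if those branches were tangent), and over such a point the last-created $(-1)$-curve meets no $\hC_i$. Closing this requires an induction that uses both the no-cycle property (Lemma~\ref{lem(2-1)}) and the fact that the part of the divisor away from $\sum_i\wC_i$ remains SNC at every stage, or else a global argument (presumably the intended use of Lemma~\ref{Morrow}) that does not presuppose anything about the singularities of $S_{\kc}$.
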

By observing the behavior of the $\Gal$-action, we obtain the following lemma: 
\begin{lem}\label{not mnc}
With the notation and assumptions as above, $(\hS _{\kc}, \hDelta _{\kc})$ is not an mnc of $\bA ^2_{\kc}$. 
In particular, $\hC _i$ is a $(-1)$-curve with $(\hC _i \cdot \hDelta - \hC _i) \le 2$ for $i=1,\dots ,n$. 
\end{lem}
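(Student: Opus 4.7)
The plan is to argue by contradiction for the main assertion, then deduce the ``in particular'' clause directly from it. Supposing that $(\hS_{\kc}, \hDelta_{\kc})$ is an mnc of $\bA^2_{\kc}$, I would combine Morrow's classification (Lemma \ref{Morrow}) with the key observation that $\wC_1,\dots,\wC_n$ lie in a single $\Gal$-orbit and therefore share one common self-intersection number.

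The first step is to reduce to the case $\mu = id$. Indeed, Lemma \ref{Morrow}(1) forbids $(-1)$-curves in the boundary of any mnc, while Lemma \ref{easy lem}(2) produces such a curve in $\hDelta_{\mu}$ as soon as $\mu \neq id$; hence $\mu = id$, so $\hDelta_{\kc} = \wDelta_{\kc}$. Next, I would decompose $\wDelta_{\kc} = \hDelta_{\sigma} + \sum_{i=1}^n \wC_i$, noting that every component of $\hDelta_{\sigma}$ has self-intersection $\le -2$ by Lemma \ref{easy lem}(1). Because $\Sing(S_{\kc}) \ne \emptyset$ and every singular point lies on $\Supp(\Delta)$ (as $S \setminus \Supp(\Delta) \simeq \bA^2_{\bk}$ is smooth), the divisor $\hDelta_{\sigma}$ contributes at least one component, so $\sharp \wDelta_{\kc} \ge n + 1 \ge 3$. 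Applying Lemma \ref{Morrow}(4), the linear chain $\wDelta_{\kc}$ must contain exactly two components $\Gamma_0$ and $\Gamma_+$ with self-intersections $0$ and positive, respectively; neither can lie in $\hDelta_{\sigma}$, so both would have to lie among $\wC_1,\dots,\wC_n$, contradicting the fact that the $\wC_i$ share a single self-intersection number.

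For the ``in particular'' clause, I note that $(\hS_{\kc}, \hDelta_{\kc})$ already satisfies smoothness, the SNC property of the boundary, and $\hS \setminus \Supp(\hDelta) \simeq \bA^2_{\bk}$ by construction, so its failure to be an mnc must come from the remaining defining property: there must exist a $(-1)$-curve $E \subseteq \Supp(\hDelta_{\kc})$ with $(E \cdot \hDelta - E) \le 2$. By Lemma \ref{easy lem}(1), $E$ is not a component of $\hDelta_{\sigma}$; by Lemma \ref{easy lem}(2), any $(-1)$-curve in $\hDelta_{\mu}$ has intersection $\ge 3$ with $\hDelta - E$, so $E$ is not a component of $\hDelta_{\mu}$; hence $E = \hC_i$ for some $i$. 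Since the $\hC_i$'s form a single $\Gal$-orbit and both the self-intersection number and the intersection number with the rest of $\hDelta_{\kc}$ are $\Gal$-invariant, every $\hC_j$ is then a $(-1)$-curve with $(\hC_j \cdot \hDelta - \hC_j) \le 2$.

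The main obstacle is not genuinely computational but rather bookkeeping of the $\Gal$-action: since $\sigma$ and $\mu$ are both defined over $\bk$, the base changes $\sigma_{\kc}$ and $\mu_{\kc}$ are $\Gal$-equivariant, so the $\Gal$-orbit $\{C_1,\dots,C_n\}$ passes to single $\Gal$-orbits $\{\wC_i\}$ and $\{\hC_i\}$ with common self-intersection numbers. With this secured, the remainder is a clean application of Lemma \ref{Morrow} and Lemma \ref{easy lem}.
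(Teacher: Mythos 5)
Your proposal is correct and follows essentially the same route as the paper: assume $(\hS_{\kc},\hDelta_{\kc})$ is an mnc, get $\sharp\hDelta_{\kc}\ge 3$ from $n\ge 2$ and $\Sing(S_{\kc})\ne\emptyset$, locate the components $\Gamma_0,\Gamma_+$ of Lemma \ref{Morrow}(4) among the $\hC_i$ via Lemma \ref{easy lem}(1), and contradict the common self-intersection number forced by the $\Gal$-orbit. Your preliminary reduction to $\mu=id$ is harmless but unnecessary (the paper skips it, since Lemma \ref{easy lem}(1) already excludes $\hDelta_\mu$ from containing $\Gamma_0,\Gamma_+$), and your explicit derivation of the ``in particular'' clause from the definition of mnc together with Lemma \ref{easy lem} and $\Gal$-invariance is exactly the deduction the paper leaves implicit.
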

\begin{proof}
Suppose that $(\hS _{\kc}, \hDelta _{\kc})$ is an mnc of $\bA ^2_{\kc}$. 
By $n \ge 2$ and $\Sing (S_{\kc}) \not= \emptyset$, we have $\sharp \hDelta _{\kc} \ge 3$. 
By Lemma \ref{Morrow}(4), there exist two irreducible components $\Gamma _0$ and $\Gamma _+$ on $\hDelta _{\kc}$ such that $(\Gamma _0)^2 =0$ and $(\Gamma _+)^2>0$. 
By Lemma \ref{easy lem}(1), $\Gamma _0$ and $\Gamma _+$ are contained in $\sum _{i=1}^n\hC _i$. 
However, we see $(\hC _1)^2=\dots = (\hC _n)^2$ since $\hC _1,\dots ,\hC _n$ lie in the same $\Gal$-orbit. 
It is a contradiction. 
\end{proof}
Since $\Delta _{\kc}$ has no cycle by Lemma \ref{lem(2-1)}, $C_1,\dots ,C_n$ meet only one point, say $p_0$. 
Then we obtain the following lemma: 
\begin{lem}\label{lem(5-1)}
Let the notation and the assumptions be the same as above. 
If $n=2$, then $p_0$ is a singular point on $S_{\kc}$. 
\end{lem}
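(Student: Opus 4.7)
The plan is to argue by contradiction. Suppose $p_0 \in S_{\kc}$ is smooth. Then $\sigma_{\kc}$ is an isomorphism in a neighbourhood of $p_0$, so the Galois-conjugate curves $\wC_1, \wC_2$ meet at the unique $\bk$-rational point $\tilde p_0 := \sigma_{\kc}^{-1}(p_0)$ with some local intersection multiplicity $k \ge 1$. I split the argument according to whether $k \ge 2$ or $k = 1$, and in each case will produce a configuration of $(-1)$-curves in $\hDelta_{\kc}$ that contradicts one of Lemmas~\ref{lem(4-3)}, \ref{lem(4-4)}, or \ref{lem(2-1)}.

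In the tangent case $k \ge 2$, since $\tilde p_0$ and its $k-1$ successive infinitely near points of tangency are all $\bk$-rational, $\mu$ blows them up over $\bk$ and produces a $\bk$-defined chain of $\mu$-exceptional curves $F_1 - F_2 - \cdots - F_k$ with $(F_i)^2 = -2$ for $i<k$ and $(F_k)^2 = -1$, such that $F_k$ meets each of $F_{k-1}, \hC_1, \hC_2$ transversely at three distinct points and $\hC_1 \cap \hC_2 = \emptyset$ in $\hS_{\kc}$. In particular $(F_k \cdot \hDelta - F_k) \ge 3$, and applying Lemma~\ref{lem(4-3)} to the three $(-1)$-curves $F_k, \hC_1, \hC_2$ forces the opposite bound $(F_k \cdot \hDelta - F_k) \le 2$, the required contradiction.

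In the transverse case $k = 1$, $\wDelta_{\kc}$ is already SNC at $\tilde p_0$, so by minimality $\mu$ is an isomorphism there, and $\hC_1, \hC_2$ meet transversely at one point in $\hS_{\kc}$. I first argue that $\mu = \mathrm{id}$ globally: otherwise Lemma~\ref{easy lem}(2) provides a $(-1)$-curve $\hE \subset \hDelta_\mu$ meeting some $\hC_i$, and combining Lemma~\ref{not mnc}'s bound $(\hC_i \cdot \hDelta - \hC_i) \le 2$ with $(\hC_1 \cdot \hC_2) = 1$ forces $\hE$ (or its Galois conjugate pair $\hE \cup \hE^\gamma$) together with $\hC_1, \hC_2$ to produce a cycle in $\hDelta_{\kc}$, contradicting Lemma~\ref{lem(2-1)}. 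With $\mu = \mathrm{id}$, Lemma~\ref{easy lem}(1) identifies $\hC_1, \hC_2$ as the only $(-1)$-curves of $\hDelta_{\kc}$; Lemma~\ref{lem(4-4)} applied to this Galois-conjugate pair then forces the weighted dual graph of $\hDelta_{\kc}$ to be the chain $[1,1,1,1]$, whose four required $(-1)$-components cannot be realised here because the remaining components of $\hDelta_{\kc}$ lie in $\hDelta_\sigma$ and have self-intersection $\le -2$ by Lemma~\ref{easy lem}(1).

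The main technical obstacle is to verify, in both cases, that the listed $(-1)$-curves really exhaust those of $\hDelta_{\kc}$, as required to invoke Lemma~\ref{lem(4-3)} and the reduction to Lemma~\ref{lem(4-4)}. Any hypothetical extra $(-1)$-curve lies in $\hDelta_\mu$ by Lemma~\ref{easy lem}(1), is disjoint from the other $(-1)$-curves of $\hDelta_\mu$, and meets some $\hC_i$ by Lemma~\ref{easy lem}(2); the tight bound $(\hC_i \cdot \hDelta - \hC_i) \le 2$ from Lemma~\ref{not mnc} then pins down the neighbourhood of $\hC_i$ so rigidly that a Galois-equivariant cycle analysis using Lemma~\ref{lem(2-1)} excludes the configuration. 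Implementing this cycle argument uniformly across both the Galois-invariant and the Galois-non-invariant extras is the most delicate step.
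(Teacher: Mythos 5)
Your strategy (contradiction from $p_0$ smooth, split by the contact order of $\wC_1$ and $\wC_2$ at the point over $p_0$) is reasonable, but the two steps on which it rests do not hold as argued. In the transverse case, your reduction to $\mu=\mathrm{id}$ fails: a $(-1)$-curve $\hE\subset\hDelta_{\mu}$ provided by Lemma~\ref{easy lem}(2) need only meet \emph{one} of $\hC_1,\hC_2$, say $\hC_1$, and then its Galois conjugate meets $\hC_2$; since the exceptional trees of $\sigma$ over the two conjugate singular points $q_1\in C_1$ and $q_2\in C_2$ (these exist because a unique singular point would be Galois-fixed, hence on $C_1\cap C_2=\{p_0\}$) are disjoint, the resulting graph, with $\hE$ attached to $\hC_1$ and its conjugate to $\hC_2$, contains no cycle, so Lemma~\ref{lem(2-1)} yields no contradiction. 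Non-SNC junctions of $\wC_i$ with the $\sigma$-exceptional locus are exactly what Theorem~\ref{main}(1) later rules out, and that theorem is proved \emph{using} the present lemma, so they cannot be assumed away here. The same configurations break your tangent case: Lemma~\ref{lem(4-3)} with $\wE_0=F_k$ requires $F_k,\hC_1,\hC_2$ to be \emph{all} the $(-1)$-curves of $\hDelta_{\kc}$ and all of the others to meet $\wE_0$, whereas extra $(-1)$-curves of $\hDelta_{\mu}$ lying over $q_1,q_2$ (meeting $\hC_1$ resp. $\hC_2$, but not $F_k$) are compatible with the bound $(\hC_i\cdot\hDelta-\hC_i)\le 2$ and are not excluded; your closing paragraph concedes this point, and the proposed ``Galois-equivariant cycle analysis'' cannot close it, again because no cycle arises. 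Those residual configurations are precisely what Lemmas~\ref{lem(4-2)} and \ref{lem(4-4)} (via mnc-contraction arguments) are built to handle, so a repair would go through them rather than through Lemma~\ref{lem(2-1)}.

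For comparison, the paper's proof is much shorter and avoids any discussion of $\mu$: if $p_0$ were smooth, $S_{\kc}$ would have at least two singular points, so $\sharp\hDelta_{\kc}\ge 4$; moreover $(\hC_1\cdot\hC_2)=1$ and both are $(-1)$-curves by Lemma~\ref{not mnc}, so Lemma~\ref{lem(4-1)} would force the dual graph of $\hDelta_{\kc}$ to be the three-vertex twig $[1,1,m]$, a contradiction. Your instinct that tangential contact and the completeness of the list of $(-1)$-curves deserve attention is legitimate, but as written the proposal leaves exactly those points unproved, and the specific mechanism you offer for them (cycles) would not work.
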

\begin{proof}
Suppose that $p_0$ is a non-singular point on $S_{\kc}$. 
Then we obtain $\sharp \hDelta _{\kc} \ge 4$ since $S_{\kc}$ has at least two singular points by Lemma \ref{lem(2-1)}. 
Moreover, we see $(\hC _1 \cdot \hC _2)=1$. 
On the other hand, $\hC _1 $ and $\hC _2$ are $(-1)$-curves by Lemma \ref{not mnc}. 
It is a contradiction to Lemma \ref{lem(4-1)}. 
\end{proof}
\subsection{Properties of boundary divisors}\label{5-2}
In this subsection, we prove Theorem \ref{main}(1). 
Since the case of $n=1$ follows from {\cite{Ko01,KT09}}, we shall only treat case of $n \ge 2$. 
Let the notation be the same as above. 
In what follows, suppose on the contrary that $\mu \not= id$. 
\begin{claim}\label{claim}
The union $\sum _{i=1}^n\hC _i$ is disjoint. 
\end{claim}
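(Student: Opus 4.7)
The plan is to argue by contradiction. I will assume that $(\hC_i \cdot \hC_j) \ge 1$ for some $i \ne j$ and derive a failure of the SNC property of $\hDelta_{\kc}$. Since $\hDelta_{\kc}$ is an SNC-divisor whose components are smooth rational curves, any two components meet transversely in at most one point, so the assumption gives $(\hC_i \cdot \hC_j) = 1$ and a unique intersection point $\hq \in \hC_i \cap \hC_j$. Because $C_1,\dots,C_n$ share only the common point $p_0$, I obtain $(\sigma \circ \mu)(\hq) = p_0$; setting $\tilde q := \mu(\hq) \in \wS_{\kc}$, both $\wC_i$ and $\wC_j$ pass through $\tilde q$.

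The heart of the argument will be to exhibit at least three distinct branches of $\wDelta_{\kc}$ through $\tilde q$, so that $\wDelta_{\kc}$ fails to be SNC there. I would split into two cases. If $p_0$ is a smooth point of $S_{\kc}$, then $\sigma^{-1}(p_0) = \{\tilde q\}$ consists of a single point, so every one of $\wC_1,\dots,\wC_n$ passes through $\tilde q$; Lemma~\ref{lem(5-1)} forces $n \ge 3$ in this case (otherwise $p_0$ would be singular), yielding $\ge 3$ branches of $\wDelta_{\kc}$ at $\tilde q$. If $p_0$ is a singular point of $S_{\kc}$, then $\tilde q$ lies on the positive-dimensional exceptional fiber $\sigma^{-1}(p_0)$, so some $\sigma$-exceptional component $F$ passes through $\tilde q$, and $F$, $\wC_i$, $\wC_j$ supply three branches of $\wDelta_{\kc}$ at $\tilde q$.

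In either case, the failure of SNC of $\wDelta_{\kc}$ at $\tilde q$, combined with the SNC property of $\hDelta_{\kc} = \mu_{\kc}^{\ast}(\wDelta_{\kc})_{\red}$, forces $\mu$ to blow up $\tilde q$. Hence $\mu^{-1}(\tilde q)$ is a positive-dimensional chain of $\mu$-exceptional curves, each of which is a component of $\hDelta_{\mu} \subseteq \hDelta_{\kc}$, and some such component $\hF$ must pass through the point $\hq \in \mu^{-1}(\tilde q)$. But then $\hC_i$, $\hC_j$, and $\hF$ are three distinct components of $\hDelta_{\kc}$ all passing through $\hq$, contradicting the SNC property and completing the contradiction. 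The main subtlety is the smooth-$p_0$ branch of the case analysis, where Lemma~\ref{lem(5-1)} is essential to rule out the borderline configuration $n = 2$, in which $\wDelta_{\kc}$ could a priori already be SNC at $\tilde q$ and the three-branch argument would collapse.
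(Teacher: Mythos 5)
Your argument is correct, but it runs the contradiction in the opposite direction from the paper's proof, so it is worth comparing the two. The paper argues: since $\hDelta _{\kc}$ is SNC, only $\hC _i$ and $\hC _j$ pass through $\hat{q}$, hence no exceptional curve of $\sigma _{\kc}\circ \mu _{\kc}$ does, so the image $q$ of $\hat{q}$ is a \emph{smooth} point of $S_{\kc}$ lying on both $C_i$ and $C_j$; Lemma \ref{lem(5-1)} then gives $q \not= p_0$, so $C_i+C_j$ contains a cycle, contradicting Lemma \ref{lem(2-1)}. You instead use the no-cycle property downstairs first, to conclude that the image of $\hat{q}$ \emph{is} $p_0$, and then force a third component of $\hDelta _{\kc}$ through $\hat{q}$: when $p_0$ is smooth, Lemma \ref{lem(5-1)} gives $n \ge 3$ and all $\wC _k$ pass through $\mu (\hat{q})$, and when $p_0$ is singular a $\sigma$-exceptional component does, so $\wDelta _{\kc}$ fails to be SNC at $\mu (\hat{q})$, $\mu$ must blow up there, and a component of $\hDelta _{\mu}$ passes through $\hat{q}$, contradicting SNC of $\hDelta _{\kc}$. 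The ingredients are the same (Lemma \ref{lem(2-1)}, Lemma \ref{lem(5-1)}, SNC-ness of $\hDelta _{\kc}$), but the paper's arrangement is shorter: it needs no case distinction on whether $p_0$ is singular and no analysis of which points $\mu$ blows up, while yours makes the geometric mechanism (a forced exceptional branch at $\hat{q}$) explicit. Two small inaccuracies in your write-up, neither fatal: the opening claim that an SNC divisor with smooth rational components has any two components meeting in at most one point is false in general (two components of an SNC divisor may meet transversally at two distinct points); here the correct justification is Lemma \ref{lem(2-1)} applied to the compactification $(\hS ,\hDelta )$, but you never actually use uniqueness of $\hat{q}$, so nothing is lost. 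Similarly, the step ``the image of $\hat{q}$ is $p_0$'' requires the pairwise statement $C_i \cap C_j = \{ p_0\}$, which again follows from the no-cycle Lemma \ref{lem(2-1)} rather than from the mere fact that all the $C_k$ share the common point $p_0$.
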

\begin{proof}
Suppose that $\hC _i$ and $\hC _j$ meet at a point, say $\hat{q}$, for some $i$ and $j$ with $i \not= j$. 
Since $\hDelta _{\kc}$ is an SNC-divisor, any irreducible component of $\hDelta _{\kc}$ passing through $\hat{q}$ is only $\hC _i$ or $\hC _j$. 
Hence, $q := (\sigma _{\kc}\circ \mu _{\kc})(\hat{q})$ is a smooth point on $S_{\kc}$, moreover, $C_i$ and $C_j$  pass through $q$. 
Let $p_0$ be the intersection point of $C_1,\dots ,C_n$ on $S_{\kc}$. 
By noting Lemma \ref{lem(5-1)}, we see $p_0 \not= q$. 
However, $C_i+C_j$ is then a cycle on $\Delta _{\kc}$. 
It is a contradiction to Lemma \ref{lem(2-1)}. 
\end{proof}
\begin{proof}[Proof of Theorem \ref{main}(1)]
By Lemma \ref{not mnc} and Claim \ref{claim}, $\sum _{i=1}^n\hC _i$ can be contracted. 
Let $\nu :\hS _{\kc} \to \cS$ be a sequence of contractions of $(-1)$-curves and subsequently (smoothly) contractible curves in $\Supp (\hDelta )$ over $\kc$, starting with the contraction of $\sum _{i=1}^n\hC _i$, such that $(\cS,\cDelta)$ is an mnc of $\bA ^2_{\kc}$, where $\cDelta := \nu _{\ast}(\hDelta _{\kc})$. 
For any $(-1)$-curve $\hE$ on $\hDelta _{\mu}$, we have $(\hE \cdot \hDelta -\hE) \ge 3$ by Lemma \ref{easy lem}, in particular, $\nu _{\ast}(\hE )\not= 0$ and $(\nu _{\ast}(\hE ))^2 \ge 0$. 
Hence, there exist at most two $(-1)$-curves on $\hDelta _{\mu}$ by Lemma \ref{Morrow}. 
By Lemma \ref{lem(4-3)}, we further see that there exist exactly two $(-1)$-curves $\hE _1$ and $\hE _2$ on $\hDelta _{\mu}$. 
Here, $\hE _1$ and $\hE _2$ lie in the same $\Gal$-orbit because two points $\mu (\hE _1)$ and $\mu (\hE _2)$ lie on $\bigcup _{i=1}^n\wC _i$. 
Noting Lemma \ref{easy lem}(2), we may assume that $\hE _i$ meets $\hC _i$ for $i=1,2$. 
Let $D$ be the connected component of the reduced exceptional divisor of $\nu$ containing $\hC _1$. 
Letting $\cE _i := \nu _{\ast}(\hE _i)$ for $i=1,2$, we obtain $(\hE _1 \cdot \hE _2)=0$ and $(\cE _1 \cdot \cE _2)=1$ by Lemmas \ref{easy lem}(2) and \ref{Morrow}, so that $(\hE _2 \cdot D)>0$. 
Hence, $\hC _2$ is included in $\Supp (D)$. 
Indeed otherwise, letting $\Gal \cdot D$ be the $\Gal$-orbit of $D$, then $\hE _1+\hE_2+\Gal \cdot D$ has a cycle. 
This contradicts Lemma \ref{lem(2-1)}. 
Moreover, we know $n=2$ by the similar argument. 
Thus, $(\cE _1 \cdot \cDelta - \cE _1) = (\hE _1 \cdot \hDelta - \hE _1) \ge 3$, which contradicts Lemma \ref{Morrow}(1). 
\end{proof}
\begin{eg}
Note that the assertion of Theorem \ref{main}(1) is not always true unless $\rho _{\bk}(S)=1$. 
We shall construct an example of the compactification $(S,\Delta )$ of $\bA ^2_{\bk}$ into an lc del Pezzo surface $S$ over $\bk$ such that $\rho _{\bk}(S)>1$ and $\wDelta _{\kc}$ is not an SNC-divisor. 
Let $C$ be a cubic curve with a cusp $o$ on $\bP ^2_{\bk}$ and let $L$ be the Zariski tangent line to $C$ at $o$, i.e., $C_{\kc} \cap L_{\kc} = \{ o\}$. 
By construction, $L \simeq \bP ^1_{\bk}$. 
Let $x_1$ be a $\bk$-rational point on $C_{\kc} \backslash \{ o\}$ and let $x_2,x_3,x_4$ be three points, whose union $x_1+x_2+x_3$ is defined over $\bk$, on $L_{\kc} \backslash \{ o\}$. 
Letting $\nu :\wS \to \bP ^2_{\bk}$ be a blow-up at four points $x_1,\dots ,x_4$ defined over $\bk$, then $\wS$ is a weak del Pezzo surface of degree $5$ such that $\wS _{\kc}$ contains exactly one $(-2)$-curve $\nu ^{-1}_{\ast}(L_{\kc})$. 
Let $\sigma :\wS \to S$ be a contraction of $\nu ^{-1}_{\ast}(L_{\kc})$ over $\bk$, so that $S$ is a Du Val del Pezzo surface with $\rho _{\bk}(S)>1$ over $\bk$. 
Now, $\nu$ can be factorized $\nu ':\wS \to \wS '$ and $\nu '':\wS ' \to \bP ^2_{\bk}$ defined over $\bk$ such that $\nu ''$ is a blow-up at a point $x_1$. 
Let $\wDelta '$ be the proper transform of $C+L$ by $\nu ''$, and let $\wDelta$ be the reduced effective divisor on $\wS$ defined by $\wDelta := {\nu '}^{\ast}(\wDelta ')_{\red}$. 
Since $(\wS ',\wDelta ')$ is a compactification of $\bA ^2_{\bk}$ ({\cite{B73}}), so are $(\wS ,\wDelta )$ and $(S,\Delta )$, where $\Delta := \sigma _{\ast}(\wDelta )$. 
By construction, $\wDelta _{\kc}$ is not an SNC-divisor. 
\end{eg}
\subsection{Properties of singularities}\label{5-3}
In this subsection, we prove Theorem \ref{main}(2) by using results in \S \S \ref{5-1} and \S \S \ref{5-2}. 
Let the notation be the same as above. 
\begin{proof}[Proof of Theorem \ref{main}(2)(i)]
We shall consider two cases whether $n=1$ or not separately. 

In the case of $n=1$, then $\Delta$ is geometrically irreducible on $S$. 
Namely, $\Delta _{\kc} = C_1$. 
Hence, we see $\sharp \Sing (S_{\kc}) \le 2$ by Lemma \ref{lem(2-3)}. 
If $\sharp \Sing (S_{\kc}) =1$, then the assertion is clearly true. 
If $\sharp \Sing (S_{\kc}) =2$, then two weighted dual graphs given by the minimal resolution at these singular points on $S_{\kc}$ are different (see {\cite{Ko01,KT09}}), so that they are $\bk$-rational. 

In the case of $n\ge 2$, let $p_0$ be the intersection point of $C_1,\dots ,C_n$ on $S_{\kc}$, so that $p_0$ is $\bk$-rational. 
If $n=2$, then $p_0$ is a singular point on $S_{\kc}$ by Lemma \ref{lem(5-1)}. 
Moreover, $p_0$ is also a singular point on $S_{\kc}$ even if $n \ge 3$. 
Otherwise, the divisor $\wDelta _{\kc}$ is not normal crossing at the point $\widetilde{p}_0 := \sigma ^{-1}(p_0)$, which is a contradiction to Theorem \ref{main}(1). 
\end{proof}
\begin{proof}[Proof of Theorem \ref{main}(2)(iii)]
If $n = 1$, then it follows from Lemma \ref{lem(2-3)}. 
Indeed, $n=\rho _{\kc}(S_{\kc})$ by Lemma \ref{lem(2-2)}. 
Hence, we assume $n \ge 2$ in what follows. 
Let $p_0$ be the intersection point of $C_1,\dots ,C_n$ on $S_{\kc}$. 
Then we notice that $p_0$ is $\bk$-rational and singular on $S_{\kc}$ (see Proof of Theorem \ref{main}(2)(i)). 
Suppose $\sharp \Sing (S_{\kc}) >1$. 
Noting Lemma \ref{lem(2-1)}, there exist $n$-times of singular points $p_1,\dots ,p_n$ on $S_{\kc}$, which lie in the same $\Gal$-orbit, such that $p_i \in C_i \backslash \{ p_0\}$ for $i=1,\dots ,n$. 
For each $i=1,\dots ,n$, there exist two irreducible components meeting $\wC _i$ on $\wDelta _{\kc} - \left( \sum _{i=1}^n\wC_i \right)$ such that the images of these via $\sigma _{\kc}$ are two points $p_0$ and $p_i$. 
On the other hand, $(\wC _i \cdot \wDelta - \wC _i ) \le 2$ by Lemma \ref{not mnc}, so that $(\wC _i \cdot \wDelta - \wC _i ) = 2$, which implies $\Sing (S_{\kc}) \cap C_i = \{ p_0,\ p_i\}$. 
Therefore, $\sharp \Sing (S_{\kc}) = n+1 = \rho _{\kc}(S_{\kc})+1$. 
\end{proof}
\begin{proof}[Proof of Theorem \ref{main}(2)(ii)]
By Theorem \ref{main}(2)(i), we can take a singular point on $S_{\kc}$, which is $\bk$-rational, say $p_0$. 
If $\sharp \Sing (S_{\kc}) \le 2$, we see $\sharp \Sing (S_{\kc}) = \sharp \Sing (S)$. 
In what follows, we may assume $\sharp \Sing (S_{\kc}) \ge 3$. 
Then all singular points except for $p_0$ on $S_{\kc}$ lie in the same $\Gal$-orbit (see Proof of Theorem \ref{main}(2)(iii)).  
Thus, $\Sing (S) = \{ p_0\}$. 
This completes the proof. 
\end{proof}
\section{Proof of Theorem \ref{main}(3)}\label{6}
Let $(S,\Delta )$ be a compactification of $\bA ^2_{\bk}$ over $\bk$. 
Assume that $S$ is an lc del Pezzo surface of rank one over $\bk$ such that $\Sing (S_{\kc}) \not= \emptyset$. 
Let $\sigma : \wS \to S$ be the minimal resolution over $\bk$ and let $\wDelta$ be the divisor on $\wS$ defined by $\wDelta := \sigma ^{\ast}(\Delta)_{\red}$. 
By Theorem \ref{main}(1), $\wDelta _{\kc}$ is an SNC-divisor. 
In this section, we classify the weighted dual graph of $\wDelta _{\kc}$. 
In fact, if $\sharp \Delta _{\kc} =1$, it is classified by {\cite{Ko01,KT09}}. 
Hence, assume $\sharp \Delta _{\kc} >1$ in what follows. 
The strategy in this case is mainly based on the argument in {\cite{Ko01,KT09}}, however, we need to use some results in \S \ref{4}. 

Let $\{ C_i\}_{1 \le i \le n}$ be all irreducible components of the divisor $\Delta _{\kc}$ on $S_{\kc}$. 
We note that $n \ge 2$ and $C_1,\dots ,C_n$ lie in the same $\Gal$-orbit. 
Let $\wC _i$ be the proper transform of $C_i$ by $\sigma _{\kc}$. 
Now, we shall construct a birational morphism $\nu :\wS \to \cS$ defined over $\bk$ such that $\wS \backslash \Supp (\wDelta ) \simeq \cS \backslash \Supp (\cDelta )$ and $(\cS _{\kc}, \cDelta _{\kc})$ is an mnc of $\bA ^2_{\kc}$, where $\cDelta  := \nu _{\ast}(\wDelta )$. 
By Lemma \ref{not mnc}, any $\wC _i$ is a $(-1)$-curve on $\wS _{\kc}$, moreover, $E_1' := \sum _{i=1}^n\wC _i$ is a disjoint union. 
Let $\cont _{E_1'}: \wS _1 := \wS _{\kc} \to \wS _2$ be the contraction of $E_1'$ defined over $\bk$. Then $\wS _1 \backslash \Supp (\wDelta _1) \simeq \wS _2 \backslash \Supp (\wDelta _2)$, where $\wDelta _1 := \wDelta _{\kc}$ and $\wDelta _2 := \cont _{E_1',\ast}(\wDelta _1)$. 
If $(\wS _{2,\kc}, \wDelta _{2,\kc})$ satisfies the property (1) in Lemma \ref{lem(6-1)}, we may put $\nu := \cont _{E_1'}$. 
If $(\wS _{2,\kc}, \wDelta _{2,\kc})$ satisfies the property (2) in Lemma \ref{lem(6-1)}, letting $E_2'$ be the union of $(-1)$-curves, which are all terminal component of $\wDelta _{2,\kc}$, we obtain the contraction $\cont _{E_2'}: \wS _2 \to \wS _3$ of $E_2'$ defined over $\bk$, so that $\wS _2 \backslash \Supp (\wDelta _2) \simeq \wS _3 \backslash \Supp (\wDelta _3)$ and $(\wS _{3,\kc}, \wDelta _{3,\kc})$ is an mnc of $\bA ^2_{\kc}$, where $\wDelta _3 := \cont _{E_2',\ast}(\wDelta _2)$. 
Hence, we may put $\nu := \cont _{E_2'} \circ \cont _{E_1'}$. 
Otherwise, by using Lemma \ref{lem(6-1)} repeatably, we can construct a sequence of contractions $\cont _{E_{\ell '}'} \circ \dots \circ \cont _{E_2'}: \wS _2 \to \dots \to \wS _{\ell '+1}$ defined over $\bk$ such that $\wS _2 \backslash \Supp (\wDelta _2) \simeq \cdots \simeq \wS _{\ell '+1} \backslash \Supp (\wDelta _{\ell '+1})$ and $(\wS _{\ell '+1,\kc}, \wDelta _{\ell '+1,\kc})$ satisfies either property (1) or (2) in Lemma \ref{lem(6-1)}, where each $E_i'$ is the disjoint union of all $(-1)$-curves in $\Supp (\wDelta _{i,\kc})$ defined over $\bk$, and $\wDelta _{i+1} := \cont _{E_i',\ast}(\wDelta _i)$ for $i=2,\dots ,\ell '$. 
Hence, we obtain the birational morphism $\nu :\wS \to \cS$ defined over $\bk$ such that $\wS \backslash \Supp (\wDelta ) \simeq \cS \backslash \Supp (\cDelta )$ and $(\cS _{\kc}, \cDelta _{\kc})$ is an mnc of $\bA ^2_{\kc}$, where $\cDelta  := \nu _{\ast}(\wDelta )$. 
By construction of $\nu$, we obtain the following lemma: 
\begin{lem}[cf.\ {\cite[Lemma 4.5]{KT09}}]\label{lem(6-2)}
With the notation and assumptions as above, then we obtain $\sharp \cDelta _{\kc} \le 2$. 
More precisely, $(\cS _{\kc},\cDelta _{\kc})$ is either $(\bP ^2_{\kc},L)$ or $(\bF _m,M_m+F)$ for some non-negative integer $m \not= 1$, where $L$ is a line on $\bP ^2_{\kc}$ and $M_m$ (resp. $F$) is the minimal section (resp. a fiber) of the structure morphism $\bF _m \to \bP ^1_{\kc}$. 
\end{lem}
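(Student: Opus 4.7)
The approach is to apply Morrow's classification (Lemma~\ref{Morrow}) to the geometric mnc $(\cS_{\kc}, \cDelta_{\kc})$, and then to use Galois equivariance to rule out the possibility that the boundary has three or more geometric components.

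First I would read off the cases $\sharp\cDelta_{\kc}\leq 2$ directly from Lemma~\ref{Morrow}. If $\sharp\cDelta_{\kc}=1$, then $(\cDelta_{\kc})^2=1$ forces $(\cS_{\kc},\cDelta_{\kc})=(\bP^2_{\kc},L)$ with $L$ a line. If $\sharp\cDelta_{\kc}=2$, then one component has self-intersection $0$, which forces a ruling $\cS_{\kc}\cong\bF_m$ with $\cDelta_{\kc}=M_m+F$; the case $m=1$ has to be excluded here because the minimal section $M_1$ would then be a $(-1)$-curve in $\Supp\cDelta_{\kc}$ meeting $\cDelta_{\kc}-M_1=F$ only once, contradicting the mnc condition. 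It therefore remains to exclude $\sharp\cDelta_{\kc}\geq 3$.

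Assume for contradiction that $\sharp\cDelta_{\kc}\geq 3$. By Lemma~\ref{Morrow}(4), $\cDelta_{\kc}$ contains unique components $\Gamma_0,\Gamma_+$ with $(\Gamma_0)^2=0$ and $(\Gamma_+)^2>0$. Since $\Gal$ preserves self-intersection numbers and these components are distinguished by their self-intersections, both $\Gamma_0$ and $\Gamma_+$ must be $\Gal$-invariant, hence defined over $\bk$. Pulling back by the $\bk$-morphism $\nu$, I obtain two $\bk$-rational irreducible components $\wGamma_0,\wGamma_+$ of $\wDelta_{\kc}$. Because $n\geq 2$ and $\wC_1,\dots,\wC_n$ form a single nontrivial $\Gal$-orbit, no individual $\wC_i$ is defined over $\bk$; consequently $\wGamma_0,\wGamma_+$ lie in the $\sigma$-exceptional divisor, and each has self-intersection $\leq -2$ on $\wS$ (since $\sigma$ is the minimal resolution).

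The core of the argument, and the main obstacle, is to extract a numerical contradiction by tracking how $(\wGamma_0)^2$ and $(\wGamma_+)^2$ evolve under the stepwise construction of $\nu$. Each step contracts a single $\Gal$-orbit of disjoint $(-1)$-curves, and Galois invariance forces the $\bk$-rational curve $\wGamma_{\pm}$ to meet either all or none of the curves in any such orbit; by Theorem~\ref{main}(1) each such intersection is transversal. Combining this with the bound $(\wC_i\cdot\wDelta-\wC_i)\leq 2$ from Lemma~\ref{not mnc} and the disjointness of $\sum\wC_i$ established in the construction, I would show that the total increment to $(\wGamma_+)^2$ across all steps of $\nu$, together with the maintained incidence $(\wGamma_0\cdot\wGamma_+)=1$ and $(\wGamma_0)^2$ being exactly $0$ in the end, cannot be reconciled with $\wGamma_0,\wGamma_+$ being $\bk$-rational components of the $\sigma$-exceptional divisor on $\wS$. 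The delicate part is organizing this bookkeeping; I would reduce to small values of $n$ and invoke the twig structure results of Section~\ref{3} applied to the chain-shaped $\cDelta_{\kc}$ to close the case.
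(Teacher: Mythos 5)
Your treatment of the easy cases ($\sharp\cDelta_{\kc}\le 2$ via Lemma \ref{Morrow}, exclusion of $m=1$) and your opening of the case $\sharp\cDelta_{\kc}\ge 3$ (Galois-invariance of $\Gamma_0,\Gamma_+$, hence $\bk$-rationality, hence their proper transforms are not among the $\wC_i$ and have self-intersection $\le -2$) coincide with the paper. But the heart of the lemma is exactly the step you leave as a plan: you write that you ``would show'' the evolution of $(\wGamma_0)^2$ and $(\wGamma_+)^2$ along the stages of $\nu$ ``cannot be reconciled'', reducing to small $n$ and invoking the twig results of \S\ref{3}. No contradiction is actually derived, and the sketched route is doubtful: the results of \S\ref{3} concern boundaries that are linear chains, which $\wDelta_{\kc}$ need not be, and nothing in your outline constrains \emph{where} the points blown down by $\nu$ can lie, which is what makes any counting argument bite. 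Your ``all or none'' incidence remark for a $\bk$-rational curve against a $\Gal$-orbit is correct, but by itself it does not produce the needed numerical obstruction.

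The paper closes the case by a localization argument you are missing. Since every component of $\wDelta_{\kc}-\sum_{i=1}^n\wC_i$ has self-intersection $\le -2$, the $\wC_i$ are the only $(-1)$-curves of $\wS_{\kc}$ in $\Supp(\wDelta_{\kc})$; consequently over each point of $\cS$ blown down by $\nu$ there must lie some $\wC_i$ (the first curve contracted over that point is already a $(-1)$-curve on $\wS_{\kc}$), so the set of blown-down points is $q:=\nu\left(\sum_{i=1}^n\wC_i\right)$, a single $\Gal$-orbit defined over $\bk$. Since the $\bk$-rational curves $\Gamma_0$ and $\Gamma_+$ both have proper transforms of self-intersection $\le -2$, each contains a blown-down point, and Galois-stability then forces the whole orbit $q$ to lie on both curves, i.e.\ $q$ is the single transversal intersection point $\Gamma_0\cap\Gamma_+$. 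With all centers of $\nu$ concentrated over this one $\bk$-rational point, one gets $(\nu^{-1}_{\ast}(\Gamma_0))^2\ge -1$ or $(\nu^{-1}_{\ast}(\Gamma_+))^2\ge -1$, contradicting the bound $\le -2$. The identification of $q$ with the single point $\Gamma_0\cap\Gamma_+$, and the resulting short intersection-number contradiction, are the missing ideas; without them (or an equivalent substitute) your proposal does not prove the lemma.
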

\begin{proof}
Suppose that $\sharp \cDelta _{\kc} \ge 3$. 
Since $(\cS _{\kc},\cDelta _{\kc})$ is an mnc of $\bA ^2_{\kc}$, we see that $\cDelta _{\kc}$ contains two components $\Gamma _0$ and $\Gamma _+$ such that $(\Gamma _0)^2=0$, $(\Gamma _+)^2>0$ and $(\Gamma _0 \cdot \Gamma _+)=1$ by Lemma \ref{Morrow}(4). 
Moreover, $\Gamma _0$ and $\Gamma _+$ are defined over $\bk$. 
Noticing that $\nu$ is defined over $\bk$, so is $q := \nu \left( \sum _{i=1}^n\wC_i \right)$. 
Since any irreducible component of $\wDelta _{\kc} -\sum _{i=1}^n\wC _i$ has self-intersection number $\le -2$, we see that $q$ is the intersection point of $\Gamma _0$ and $\Gamma _+$. 
However, we then have $(\nu ^{-1}_{\ast}(\Gamma _0))^2 \ge -1$ or $(\nu ^{-1}_{\ast}(\Gamma _+))^2 \ge -1$. 
This is a contradiction. 
\end{proof}
Since $\nu$ is defined over $\bk$, notice that $\nu$ can be written by the composition of contractions $\nu = \cont _{E_{\ell}} \circ \dots \circ \cont _{E_1}$, where each $E_i$ is a $\Gal$-orbit of a $(-1)$-curve, which is a disjoint union. 
Hence, let us put $\nu _i := \cont _{E_{\ell}} \circ \dots \circ \cont _{E_i}$ for $i=1,\dots ,\ell$. 
By Lemma \ref{lem(6-2)}, we see that $(E_{i,j} \cdot \nu _i^{\ast}(\cDelta )_{\red} - E_{i,j})$ is equal to $1$ or $2$ for any irreducible component $E_{i,j}$ of $E_i$. 

By Theorem \ref{main}(2) and Lemma \ref{lem(2-2)}, $S_{\kc}$ has exactly one singular point $p_0$, which is $\bk$-rational, and $\sharp \Sing (S_{\kc}) =n+1$ or $1$. 
Let $p_1,\dots ,p_n$ be singular points other than $p_0$ on $S_{\kc}$ such that $p_i \in C_i$ for $i=1,\dots ,n$ (if it exists) and let $\wDelta ^{(i)}$ be the reduced exceptional divisor of the minimal resolution at $p_i$ on $S_{\kc}$ for $i=0,\dots ,n$, where $\wDelta ^{(i)}:=0$ if $p_i$ does not exist. 
Namely, $\wDelta _{\kc} = \sum _{i=0}^n \wDelta ^{(i)} + \sum _{i=1}^n\wC _i$. 
By the above argument, we have: 
\begin{lem}[cf.\ {\cite[Lemma 4.6]{KT09}}]\label{lem(6-3)}
Let the notation and the assumptions be the same as above. 
For any $i=1,\dots ,n$, the following three assertions hold: 
\begin{enumerate}
\item If $\sharp \Sing (S_{\kc}) =n+1$, then the dual graph of $\wDelta ^{(i)}$ is a linear chain. 
\item If $\sharp \Sing (S_{\kc}) =n+1$, then $\wC _i$ meets a terminal component of $\wDelta ^{(i)}$. 
\item Any irreducible component $\wGamma _0$ of $\wDelta ^{(0)}$ with $(\wGamma _0 \cdot \wDelta ^{(0)} - \wGamma _0) \ge 3$ does not meet $\wC _i$. 
\end{enumerate}
\end{lem}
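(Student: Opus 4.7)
My plan is to follow the strategy of \cite[Lemma 4.6]{KT09}, adapted to the present setting $n \ge 2$ by exploiting the explicit factorization $\nu = \cont_{E_\ell} \circ \cdots \circ \cont_{E_1}$ with $E_1 = \sum_{i=1}^n \wC_i$ constructed just above. The central tool throughout will be the valence constraint recalled just before the lemma: for every irreducible component $E_{k,j}$ of $E_k$,
\[
(E_{k,j} \cdot \nu_k^{\ast}(\cDelta)_{\red} - E_{k,j}) \le 2,
\]
i.e., in the dual graph of the intermediate boundary $\nu_k^{\ast}(\cDelta)_{\red}$ the vertex $E_{k,j}$ has at most two neighbors.

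I would begin with (3). Assume toward contradiction that some $\wC_j$ meets a component $\wGamma_0 \subset \wDelta^{(0)}$ with $(\wGamma_0 \cdot \wDelta^{(0)} - \wGamma_0) \ge 3$, and let $k \ge 1$ denote the number of $\wC_i$'s meeting $\wGamma_0$. After $\cont_{E_1}$ the image of $\wGamma_0$ loses $k$ neighbors (the contracted $\wC_i$'s) and picks up $k$ new ones (namely, for each such $\wC_i$, the other irreducible component of $\wDelta$ that $\wC_i$ meets: either the component $Y_i \subset \wDelta^{(i)}$ if $\sharp \Sing (S_{\kc}) = n+1$, or another component of $\wDelta^{(0)}$ if $\sharp \Sing (S_{\kc}) = 1$), so the valence of its image in $\wDelta_2$ is at least $3$. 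I would then track this image through the remaining contractions $\cont_{E_{k'}}$: each such step decreases the valence by at most one and only by contracting a $(-1)$-curve neighbor that is terminal with respect to $\wGamma_0$'s image at that stage, while by Lemma \ref{lem(6-2)} the eventual target $\cDelta_{\kc}$ has at most two components each of dual-graph valence $\le 1$. A careful combinatorial analysis, keeping the $\Gal$-orbit structure straight so that entire orbits of terminal $(-1)$-curves are contracted simultaneously, should then yield a stage at which either the image of $\wGamma_0$ is itself a $(-1)$-curve of valence $\ge 3$ belonging to some $E_k$, or a $(-1)$-curve in some $E_k$ attached to it has more than two neighbors, in either case violating the bound.

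For (1), the analogous argument applies to $\wDelta^{(i)}$ with $i \ge 1$: under the assumption $\sharp \Sing (S_{\kc}) = n+1$ every component of $\wDelta^{(i)}$ is absorbed by $\nu$, so any branch point in $\wDelta^{(i)}$ would eventually propagate to a $(-1)$-curve of valence $\ge 3$ in some $E_k$, contradicting the bound; hence the dual graph of $\wDelta^{(i)}$ is a linear chain. For (2), using $(\wC_i \cdot \wDelta - \wC_i) = 2$ established in the proof of Theorem \ref{main}(2)(iii), $\wC_i$ meets exactly one component $Y_i$ of the chain $\wDelta^{(i)}$; if $Y_i$ were non-terminal, the image of $Y_i$ after $\cont_{E_1}$ would be a $(-1)$-curve of valence $\ge 3$ in $\wDelta_2$, again violating the bound at its contraction step. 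The main obstacle will be the valence bookkeeping in (3), where the $\Gal$-equivariance must be respected while controlling how many valence decreases the successive $\cont_{E_{k'}}$ can induce on the image of $\wGamma_0$; parts (1) and (2) should fall out by simpler variants of the same analysis once (3) is settled.
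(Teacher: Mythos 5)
Your overall strategy is the same as the paper's: the paper gives no separate proof of this lemma but deduces it, as you do, from the factorization $\nu =\cont _{E_{\ell}}\circ \cdots \circ \cont _{E_1}$ with $E_1=\sum _{i=1}^n\wC _i$, the fact that each contracted component has valence $1$ or $2$ in the intermediate boundary, and Lemma \ref{lem(6-2)} (cf.\ \cite[Lemma 4.6]{KT09}). So in spirit you are following the intended route.

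However, the crux of your argument is exactly the part you defer to ``a careful combinatorial analysis,'' and the mechanism you predict for the contradiction is not forced by valence bookkeeping alone. Contracting a terminal $(-1)$-curve adjacent to the image of $\wGamma _0$ (or of $Y_i$) lowers its valence by one without ever violating the bound $(E_{k,j}\cdot \nu _k^{\ast}(\cDelta )_{\red}-E_{k,j})\le 2$, so a valence-$\ge 3$ vertex can in principle be whittled down to valence $\le 2$ and then contracted, or survive with valence $\le 1$; nothing in your tracking rules this out. What is missing is the structural input that every component of $\sum _{i\ge 0}\wDelta ^{(i)}$ has self-intersection $\le -2$, hence a boundary component can only become a $(-1)$-curve, and be contracted, after it is linked through previously contracted curves to some $\wC _l$; consequently, as long as a branch vertex is alive, any branch at it containing no $\wC _l$ is completely untouched by $\nu$. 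Combining this ``spreading from $E_1$'' property with $\sharp \cDelta _{\kc}\le 2$ (Lemma \ref{lem(6-2)}) and the $\Gal$-symmetry of the surviving components is what actually yields the contradiction: either the branch vertex must be contracted while its valence is still $\ge 3$ (impossible), or too many components survive into $\cDelta _{\kc}$; note that in the latter alternative the contradiction is a count of surviving components, not a violation of the valence bound at some contraction, contrary to the dichotomy you assert. The same gap appears in your treatment of (1) (the claim that all of $\wDelta ^{(i)}$ is absorbed by $\nu$ is unproved and is itself a consequence of this finer analysis) and of (2), where your assertion that the image of $Y_i$ after $\cont _{E_1}$ ``would be a $(-1)$-curve'' silently assumes $(Y_i)^2=-2$; if $(Y_i)^2\le -3$ one again needs the spreading-plus-counting argument to force $Y_i$ to be contracted while its two untouched branches keep its valence at least $3$.
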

Now, the singular point $p_0$ has the following three possibilities: 
\begin{enumerate}
\renewcommand{\labelenumi}{(\Roman{enumi})}
\item $p_0$ is a cyclic quotient singular point; 
\item $p_0$ is a non-cyclic quotient singular point; 
\item $p_0$ is a log canonical but not a quotient singular point. 
\end{enumerate}
In order to determine the weighted dual graph of $\wDelta _{\kc}$, we will consider the above three cases (I)--(III) separately according to the following subsections \S \S \ref{6-1}--\ref{6-3}. 
\subsection{Case of cyclic quotient singularity}\label{6-1}
Assume that $p_0$ is a cyclic quotient singular point. 
Then we consider the following three subcases separately: 
\begin{enumerate}
\renewcommand{\labelenumi}{(I-\arabic{enumi})}
\item $\wDelta _{\kc}$ is not a linear chain and any irreducible component of $\wDelta ^{(0)}$ is defined over $\bk$; 
\item $\wDelta _{\kc}$ is not a linear chain and there exists an irreducible component of $\wDelta ^{(0)}$, which is not defined over $\bk$; 
\item $\wDelta _{\kc}$ is a linear chain. 
\end{enumerate}
\subsubsection{Subcase (I-1)}\label{6-1-1}
Then there exists exactly one irreducible component $\wGamma$ of $\wDelta ^{(0)}$ such that $\sum _{i=1}^n(\wGamma \cdot \wC _i) =n$. 
In particular, $(\widetilde{\Gamma} \cdot \wDelta - \widetilde{\Gamma}) \ge 3$. 
Thus, $\nu$ must first repeat the contraction until all irreducible components in $\Supp \left( \sum _{i=1}^n\wDelta ^{(i)}\right)$ for $i=1,\dots ,n$ are contracted, so that $\wDelta ^{(i)}$ is a linear chain consisting entirely of $(-2)$-curves for $i=1,\dots ,n$ (see also Lemma \ref{lem(6-3)}). 
By the above argument combined with Lemma \ref{Morrow} and Proposition \ref{prop(3)}(1), the weighted dual graph of $\wDelta _{\kc}$ is given as $(i)$ $(i=1,2,3)$ in Appendix \ref{9-1}, where $n \ge 3$ in the case of (1). 
\subsubsection{Subcase (I-2)}\label{6-1-2}
Then there exist exactly two irreducible components $\wGamma _1$ and $\wGamma _2$ of $\wDelta ^{(0)}$ such that $(\wGamma _i \cdot \wDelta - \wGamma _i) \ge 3$ for $i=1,2$ by noting Lemma \ref{lem(3-4)}(1), in particular, $\wGamma _1$ and $\wGamma _2$ lie in the same $\Gal$-orbit. 
By a similar argument to \ref{6-1-1}, $\wDelta ^{(i)}$ is a linear chain consisting entirely of $(-2)$-curves for $i=1,\dots ,n$. 
Hence, by Lemma \ref{Morrow} and Proposition \ref{prop(3)}(2) and (3), the weighted dual graph of $\wDelta _{\kc}$ is given as $(i)$ $(i=4,5,6,7)$ in Appendix \ref{9-1}, where $n' := \frac{n}{2} \ge 2$ in the case of (4). 
\subsubsection{Subcase (I-3)}\label{6-1-3}
Then we immediately see that $n=2$ and both $\wC _1$ and $\wC _2$ meet terminal components in $\wDelta ^{(0)}$, respectively. 
Moreover, these terminal components in $\wDelta ^{(0)}$ are the same component if and only if $\sharp \wDelta ^{(0)}=1$. 
Thus, by Proposition \ref{prop(3)}(2) and (3), the weighted dual graph of $\wDelta _{\kc}$ is given as $(i)$ $(i=8,9,10)$ in Appendix \ref{9-1}. 
\subsection{Case of non-cyclic quotient singularity}\label{6-2}
Assume that $p_0$ is a non-cyclic quotient singular point. 
In this case, Lemma \ref{lem(6-3)} and the following lemma play a useful role: 
\begin{lem}\label{lem(6-4)}
Let the notation and the assumptions be the same as above. 
For any irreducible component $\wGamma$ of $\wDelta ^{(0)}$ satisfying $n':= (\wGamma \cdot \wC _1 + \dots + \wC _n) >0$, then we have $(\wGamma )^2 < -n'$. 
\end{lem}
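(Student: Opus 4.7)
The plan is to argue by contradiction, so I assume that $(\wGamma)^2 + n' \ge 0$. Let $\wC_{i_1},\ldots,\wC_{i_{n'}}$ denote the $n'$ irreducible components of $\wC_1+\cdots+\wC_n$ that meet $\wGamma$; each meets $\wGamma$ transversally in a single point because $\wDelta_{\kc}$ is SNC by Theorem \ref{main}(1). By Lemma \ref{not mnc} together with Claim \ref{claim}, the $\wC_i$'s form a $\Gal$-orbit of disjoint $(-1)$-curves, so the first step $\nu_{1}:\wS \to \wS_{2}$ of the contraction $\nu$ is the $\bk$-defined simultaneous blow-down of $\sum_{i=1}^{n}\wC_i$. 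Writing $\wGamma_{2}$ for the image of $\wGamma$, we obtain
\[
(\wGamma_{2})^{2} \;=\; (\wGamma)^{2} + n' \;\ge\; 0.
\]

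Next, I would argue that $\wGamma_{2}$ persists all the way to $\cS$ with non-negative self-intersection. Each remaining step of $\nu$ contracts a $\Gal$-orbit of disjoint $(-1)$-curves in the current boundary; since the self-intersection of $\wGamma_{2}$ is already $\ge 0$ and can only weakly increase under further blow-downs, $\wGamma_{2}$ is never itself contracted. Hence $\nu_{\ast}(\wGamma)$ is a nonzero component of $\cDelta$ of non-negative self-intersection. Lemma \ref{lem(6-2)} then restricts $(\cS,\cDelta)$ to $(\bP^{2}_{\kc},L)$ or $(\bF_{m},M_{m}+F)$ with $m \ne 1$, and $\nu_{\ast}(\wGamma) \in \{L,\,F,\,M_{0}\}$, where $M_{0}$ arises only when $m=0$.

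To close the argument I would exploit that $p_{0}$ is a non-cyclic quotient singular point, so the dual graph of $\wDelta^{(0)}$ is a tree with at least one branch vertex and $\sharp \wDelta^{(0)} \ge 4$. Since $\sharp \cDelta \le 2$, most components of $\wDelta^{(0)}$ must be contracted by $\nu$. Tracking the image of a neighbor $\wGamma'$ of $\wGamma$ inside $\wDelta^{(0)}$ and of the branch vertex under the sequence of $\Gal$-equivariant blow-downs, and applying Lemma \ref{lem(6-1)} together with the valence bound of Lemma \ref{lem(4-3)} at each intermediate step, I would show that no such sequence can simultaneously collapse the branch structure of $\wDelta^{(0)}$ down to at most two surviving components while leaving $\wGamma_{2}$ intact with non-negative self-intersection. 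This combinatorial bookkeeping is the main obstacle: one has to run through the possibilities for the branch geometry of $\wDelta^{(0)}$ case by case and rule each out. As a cleaner alternative, I would also pursue the following direct route: a short Schur-complement computation shows that the intersection matrix of $\wGamma + \sum_{j=1}^{n'}\wC_{i_{j}}$ has determinant $(-1)^{n'}\bigl((\wGamma)^{2}+n'\bigr)$, so the desired inequality $(\wGamma)^{2} < -n'$ is equivalent to negative-definiteness of this $(n'+1)\times(n'+1)$ matrix; this negative-definiteness one could try to establish by embedding the configuration into a larger subgraph of $\wDelta_{\kc}$ that is contractible to a point, invoking Grauert's criterion.
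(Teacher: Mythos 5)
Your first half coincides with the paper's argument: assuming $(\wGamma)^2\ge -n'$, after the $\bk$-defined contraction of the disjoint $\Gal$-orbit $\sum_{i=1}^n\wC_i$ the image of $\wGamma$ has self-intersection $\ge 0$, hence can never become a $(-1)$-curve in the later steps and survives as a component of $\cDelta$ of non-negative self-intersection. But you do not close the argument. Your main route aims at a contradiction with $\sharp\cDelta\le 2$ via a case-by-case analysis of the branch geometry of $\wDelta^{(0)}$ that you explicitly leave unexecuted (``this combinatorial bookkeeping is the main obstacle''), so as written there is no proof. The paper's finish is one line and of a different nature: from the survival of $\wGamma$ it deduces that no irreducible component of $\wDelta^{(0)}$ is contracted by $\nu$; since $p_0$ is not a cyclic quotient singular point, the dual graph of $\wDelta^{(0)}$ has a branch vertex, so $\cDelta$ would fail to be a linear chain, contradicting Lemma \ref{Morrow}(1). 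That is, the contradiction comes from the linearity of the boundary of an mnc, not from the count $\sharp\cDelta\le 2$; this is the idea your sketch is missing.

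Your ``cleaner alternative'' also does not work as described. The Schur-complement computation is correct (the determinant is $(-1)^{n'}\bigl((\wGamma)^2+n'\bigr)$, and negative definiteness of $\wGamma+\sum_{j}\wC_{i_j}$ is equivalent to the desired inequality), but the proposed way of establishing negative definiteness, namely embedding the configuration into a subdivisor of $\wDelta_{\kc}$ that is contractible to a point, has no candidate: the natural negative-definite subdivisor is the exceptional locus of $\sigma$, which does not contain the $(-1)$-curves $\wC_{i_j}$, while $\wDelta_{\kc}$ itself is far from negative definite (its components generate the Neron--Severi group of $\wS_{\kc}$ and support an ample divisor). More seriously, the negative definiteness you want genuinely depends on the standing hypothesis that $p_0$ is not a cyclic quotient point: in Case (I) it can fail, e.g.\ in graph (1) (or (2)) of Appendix \ref{9} with $t=0$ a component of $\wDelta^{(0)}$ of self-intersection $-n+1$ meets all $n$ curves $\wC_i$, and the corresponding Schur complement equals $+1$. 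Hence no embedding/Grauert-type argument that ignores the branch structure of $\wDelta^{(0)}$ can succeed, and your sketch gives no indication of how that structure would enter.
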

\begin{proof}
Suppose that $(\wGamma )^2 \ge -n'$. 
By the construction of $\cont _{E_1}$, we have $(\cont _{E_1,\ast}(\wGamma ))^2 \ge -n' + n'=0$. 
This means that any irreducible component of $\wDelta ^{(0)}$ is not contracted by $\nu$. 
Thus, $\cDelta$ is not a linear chain, which is a contradiction to Lemma \ref{Morrow}(1). 
\end{proof}
Note that the classification of quotient singularities of dimension two is well-known. 
In particular, a list of weighted dual graphs of the minimal resolutions at all quotient singularities on algebraic surfaces is summarized in, e.g., {\cite[p.\ 57]{A92}} or {\cite[pp.\ 55--56]{Mi01}}. 

By the assumption, $\wDelta ^{(0)}$ is not a linear chain, in particular, there is exactly one irreducible component $\wGamma _0$ of $\wDelta ^{(0)}$ such that $(\wGamma _0 \cdot \wDelta ^{(0)} - \wGamma _0)=3$. 
On the other hand, since $\cDelta$ is a linear chain by Lemma \ref{Morrow}(1), $\nu$ is factorized $\nu '' \circ \nu '$ such that $\wGamma _0' \not= 0$ and $(\wGamma _0' \cdot \wDelta '- \wGamma _0') \le 2$, where $\wGamma _0' := \nu _{\ast}'(\wGamma _0)$ and $\wDelta ' := \nu _{\ast}'(\wDelta _{\kc})$. 
We can assume that $\nu '$ is defined over $\bk$ since $\nu$ is a sequence of contractions of the $\Gal$-orbit consisting of $(-1)$-curves and subsequently $\Gal$-orbits consisting of (smoothly) contractible curves in $\Supp (\wDelta _{\kc})$. 
In other words, any irreducible component $\wDelta '$ except for $\wGamma _0'$ has self-intersection number $\le -2$. 
Hence, $\nu '$ is uniquely determined. 
Now, we consider the following three subcases (II-1)--(II-3) separately: 
\begin{enumerate}
\renewcommand{\labelenumi}{(II-\arabic{enumi})}
\item $(\wGamma _0' \cdot \wDelta '- \wGamma _0')=2$ holds; 
\item $(\wGamma _0' \cdot \wDelta '- \wGamma _0')=1$ holds; 
\item $(\wGamma _0' \cdot \wDelta '- \wGamma _0')=0$ holds. 
\end{enumerate}
\subsubsection{Subcase (II-1)}\label{6-2-1}
By virtue of $\sharp \wDelta ' \ge 3$ combined with Lemma \ref{Morrow}(4), $\wGamma _0'$ is a $(-1)$-curve. 
Thus, by Proposition \ref{prop(3)}(1) and Lemmas \ref{lem(6-3)} and \ref{lem(6-4)} combined with the classification of singularities, the weighted dual graph of $\wDelta _{\kc}$ is given as $(i)$ $(i=11,\dots ,17)$ in Appendix \ref{9-2}. 
\subsubsection{Subcase (II-2)}\label{6-2-2}
Then the weighted dual graph of $\wDelta ^{(0)}$ is one of the following three weighted dual graphs, where $m_i$ and $m$ are integers such that $m_i \ge 2$ and $m \ge 2$: 
\begin{align*}
\xygraph{
\circ -[]!{+(.8,-.4)} \circ ([]!{+(0,-.3)} {^{-m_1}})
(-[]!{+(-.8,-.4)} \circ ,- []!{+(.8,0)} \circ ([]!{+(0,-.3)} {^{-m_2}}) - []!{+(.8,0)} \cdots - []!{+(.8,0)} \circ ([]!{+(0,-.3)} {^{-m_r}})
} \qquad
\xygraph{
\circ ([]!{+(0,-.3)} {^{-3}}) -[]!{+(.8,-.4)} \circ ([]!{+(0,-.3)} {^{-m}})
(-[]!{+(-.8,-.4)} \circ ([]!{+(0,-.3)} {^{-3}}),- []!{+(.8,0)} \circ )
} \qquad 
\xygraph{
\circ - []!{+(.8,0)} \circ ([]!{+(0,0)} {} -[]!{+(.8,-.4)} \circ ([]!{+(0,-.3)} {^{-m}})
(-[]!{+(-.8,-.4)} \circ - []!{+(-.8,0)} \circ ,- []!{+(.8,0)} \circ )
}
\end{align*}
Note that $\wGamma _0'$ is not a $(-1)$-curve by Lemma \ref{lem(3-1)}(2). 
Hence, we obtain $\nu ' = \nu$. 
Moreover, we see that $\sharp \wDelta ' =2$ and $\wGamma _0'$ is a $0$-curve by Lemma \ref{Morrow}(3) and (4). 
Thus,  by Lemmas \ref{lem(6-3)} and \ref{lem(6-4)}, the weighted dual graph of $\wDelta _{\kc}$ is given as $(i)$ $(i=18,\dots ,25)$ in Appendix \ref{9-2}. 
\subsubsection{Subcase (II-3)}\label{6-2-3}
Then the weighted dual graph of $\wDelta ^{(0)}$ is then as follows, where $m$ is an integer with $m\ge 2$: 
\begin{align*}
\xygraph{
\circ ([]!{+(.3,0)} {^{-m}}) (- []!{+(-.8,0)} \circ ,(- []!{+(-.8,.4)} \circ , - []!{+(-.8,-.4)} \circ)) }
\end{align*}
By the assumption, we see $\nu = \nu '$ and $\sharp \wDelta ' =1$. 
In particular, $\wGamma _0'$ is a $1$-curve by Lemma \ref{Morrow}(2). 
Thus, by Lemmas \ref{lem(6-3)} and \ref{lem(6-4)}, the weighted dual graph of $\wDelta _{\kc}$ is given as $(26)$ or $(27)$ in Appendix \ref{9-2}. 
\subsection{Case of log canonical but not quotient singularity}\label{6-3}
Assume that $p_0$ is a log canonical but not a quotient singular point. 
Note that the classification of log canonical singularities of dimension two is known (see, e.g., {\cite[p.\ 58]{A92}} or {\cite{Ili86}}), where it is enough to treat only the rational singularities (cf.\ {\cite[Theorem 1.1(2)]{KT09}}). 
Hence, there exists at least one irreducible component $\wGamma _0$ of $\wDelta ^{(0)}$ satisfying $(\wGamma _0 \cdot \wDelta ^{(0)} - \wGamma _0) \ge 3$. 
More precisely, one of the following three subcases holds: 
\begin{enumerate}
\renewcommand{\labelenumi}{(III-\arabic{enumi})}
\item There exists exactly one irreducible component $\wGamma _0$ of $\wDelta ^{(0)}$ satisfying $(\wGamma _0 \cdot \wDelta ^{(0)} - \wGamma _0) =4$. 
\item There exist exactly two irreducible components $\wGamma _{0,1}$ and $\wGamma _{0,2}$ satisfying $(\wGamma _{0,i} \cdot \wDelta ^{(0)} - \wGamma _{0,i})=3$ for $i=1,2$. 
\item There exists exactly one irreducible component $\wGamma _0$ of $\wDelta ^{(0)}$ satisfying $(\wGamma _0 \cdot \wDelta ^{(0)} - \wGamma _0) =3$. 
\end{enumerate}
Notice that Lemma \ref{lem(6-4)} works verbatim for subcases (III-1)--(III-3). 
In what follows, we consider subcases (III-1)--(III-3) separately. 
\begin{rem}[{\cite[3.4.1]{A92}}]\label{rem of lc}
Let $V$ be a normal algebraic surface with a rational log canonical but not quotient singular point $p$ over $\kc$, let $\sigma :\wV \to V$ be the minimal resolution at $p$, and let $E$ be the reduced exceptional divisor of $\sigma$. 
Then $E$ contains at least one $(-m)$-curve with $m>2$. 
Otherwise, by straightforward computing, the determinant of the intersection matrix of $E$ is zero, which contradicts that this intersection matrix is negative-definite ({\cite{Mu61}}). 
\end{rem}
\subsubsection{Subcase (III-1)}\label{6-3-1}
Then the weighted dual graph of $\wDelta ^{(0)}$ is as follows, where $m$ is an integer with $m>2$ (see Remark \ref{rem of lc}): 
\begin{align*}
\xygraph{
\circ ([]!{+(0,-.3)} {^{-m}}) 
((- []!{+(-.8,.4)} \circ ,- []!{+(-.8,-.4)} \circ ), (- []!{+(.8,.4)} \circ ,- []!{+(.8,-.4)} \circ ))
}
\end{align*}
By the similar argument to \S \S \ref{6-2}, $\nu$ is factorized $\nu '' \circ \nu '$ such that $\nu '$ is as in \S \S \ref{6-2}. 
In particular, we can assume that any irreducible component $\wDelta '$ except for $\wGamma _0'$ has self-intersection number $\le -2$, where $\wDelta ' := \nu _{\ast}'(\wDelta _{\kc})$ and $\wGamma _0' := \nu _{\ast}'(\wGamma _0)$. 
By Lemma \ref{lem(6-3)}(3), $\wC _i$ meets a terminal component of $\wDelta ^{(0)}$ for any $i=1,\dots ,n$. 
Moreover, $n \le 4$ by Lemma \ref{lem(6-4)}, in particular, $n \not= 2$ by Lemmas \ref{Morrow} and \ref{lem(3-4)}(1). 
Then we consider two cases of $n=3$ and $n=4$ separately. 

If $n=3$, then $\nu ' = \nu$ and $\sharp \wDelta ' =2$. 
Moreover, $\wGamma _0'$ is a $0$-curve by Lemma \ref{Morrow}(3). 
Therefore, the weighted dual graph of $\wDelta _{\kc}$ is given as $(28)$ or $(29)$ in Appendix \ref{9-3}. 

If $n=4$, then $\nu ' = \nu$ and $\sharp \wDelta ' =1$. 
Moreover, $\wGamma _0'$ is a $1$-curve by Lemma \ref{Morrow}(2). 
Therefore, the weighted dual graph of $\wDelta _{\kc}$ is given as $(30)$ or $(31)$ in Appendix \ref{9-3}. 
\subsubsection{Subcase (III-2)}\label{6-3-2}
Then the weighted dual graph of $\wDelta ^{(0)}$ is as follows, where each $m_i$ is an integer with $m_i \ge 2$ (furthermore, at least one $m_i$ is strictly more than $2$ by Remark \ref{rem of lc}) and $r>1$: 
\begin{align*}
\xygraph{
\circ ([]!{+(0,-.3)} {^{-m_1}}) 
((- []!{+(-.8,.4)} \circ,- []!{+(-.8,-.4)} \circ), - []!{+(.8,0)} \cdots - []!{+(.8,0)} \circ ([]!{+(0,-.3)} {^{-m_r}}) (- []!{+(.8,.4)} \circ,- []!{+(.8,-.4)} \circ))
}
\end{align*}
Since $\cDelta$ is a linear chain by Lemma \ref{Morrow}(1), $\nu$ is factorized $\nu '' \circ \nu '$ such that $\wGamma _{0,i}' \not= 0$  for $i=1,2$ and $(\wGamma _{0,1}' \cdot \wDelta '- \wGamma _{0,1}') \le 2$ by replacing $\wGamma _{0,1}$ and $\wGamma _{0,2}$ as needed, where $\wGamma _{0,i}' := \nu _{\ast}'(\wGamma _{0,i})$ for $i=1,2$ and $\wDelta ' := \nu _{\ast}'(\wDelta _{\kc})$. 
For the same reason as in \S \S \ref{6-2}, we can assume that $\nu '$ is defined over $\bk$. 
Then $\nu '$ is uniquely determined, and any irreducible component $\wDelta '$ except for $\wGamma _{0,i}'$ for $i=1,2$ has self-intersection number $\le -2$. 
By noticing $\wGamma _{0,i} \not= 0$ for $i=1,2$ combined with Lemma \ref{lem(6-3)}(3), $\wC _i$ meets a terminal component of $\wDelta ^{(0)}$ for any $i=1,\dots ,n$. 
Moreover, $n \le 4$ by Lemma \ref{lem(6-4)}, in particular, $n \not= 3$ by considering the symmetry of the weighted dual graph of $\wDelta ^{(0)}$. 
In what follows, we consider two cases of $n=2$ and $n=4$ separately. 

In the case of $n=2$, suppose that the weighted dual graph of $\wDelta ^{(0)} + \sum _{i=1}^2\wC _i$ is as follows: 
\begin{align*}
\xygraph{
\circ ([]!{+(0,-.3)} {^{-m_1}}) 
((- []!{+(-.8,.4)} \circ - []!{+(-.8,0)} \bullet,- []!{+(-.8,-.4)} \circ - []!{+(-.8,0)} \bullet), - []!{+(.8,0)} \cdots - []!{+(.8,0)} \circ ([]!{+(0,-.3)} {^{-m_r}}) (- []!{+(.8,.4)} \circ,- []!{+(.8,-.4)} \circ))
}
\end{align*}
Then we see that $(\wGamma _{0,1}' \cdot \wDelta ' - \wGamma _{0,1}') =1$ and $\wDelta '$ is not a linear chain, so that $\wGamma _{0,1}'$ is a $(-1)$-curve. 
Moreover, $r > 2$ by Lemma \ref{lem(3-1)}(3). 
Namely, the weighted dual graph of $\wDelta '$ is as follows: 
\begin{align*}
\xygraph{
\bullet - []!{+(.8,0)} \circ ([]!{+(0,-.3)} {^{-m_2}}) - []!{+(.8,0)} \cdots - []!{+(.8,0)} \circ ([]!{+(0,-.3)} {^{-m_r}}) (- []!{+(.8,.4)} \circ,- []!{+(.8,-.4)} \circ)
}
\end{align*}
By contracting of $\wDelta ' - \wGamma _{0,1}'$ over $\kc$, we have a log del Pezzo surface of rank one with exactly one quotient singular point of type $D$, which is a contradiction to {\cite[Theorem 3.1(1)]{Ko99}}. 
Hence, the weighted dual graph of $\wDelta ^{(0)} + \sum _{i=1}^2\wC _i$ is as follows: 
\begin{align*}
\xygraph{
\circ ([]!{+(0,-.3)} {^{-m_1}}) 
((- []!{+(-1,.4)} \circ - []!{+(-.8,0)} \bullet,- []!{+(-1,-.4)} \circ), - []!{+(.8,0)} \cdots - []!{+(.8,0)} \circ ([]!{+(0,-.3)} {^{-m_r}}) (- []!{+(1,.4)} \circ - []!{+(.8,0)} \bullet,- []!{+(1,-.4)} \circ))
}
\end{align*}
Then $\sharp \wDelta ' \ge 4$ and $\wDelta '$ is a linear chain. 
Moreover, $\wGamma _{0,i}'$ is a $(-1)$-curve for $i=1,2$ by noting Lemma \ref{Morrow}(4). 
Thus, the weighted dual graph of $\wDelta '$ satisfies the condition of Proposition \ref{prop(3)}(2) or (3). 
In particular, by Proposition \ref{prop(3)}(2) and (3) and Lemma \ref{lem(6-3)} combined with Remark \ref{rem of lc}, the weighted dual graph of $\wDelta _{\kc}$ is given as $(i)$ $(i=32,33,34)$ in Appendix \ref{9-3}. 

In the case of $n=4$, note that $\wGamma _{0,i}'$ is not a $(-1)$-curve for $i=1,2$ by Lemma \ref{lem(3-1)}(2). 
Hence, we obtain $\nu ' = \nu$. 
Moreover, $\sharp \wDelta ' =2$ and $\wGamma _{0,i}'$ is a $0$-curve for $i=1,2$ by Lemma \ref{Morrow}(3) and (4). 
Therefore, by Lemma \ref{lem(6-3)} combined with Remark \ref{rem of lc}, the weighted dual graph of $\wDelta _{\kc}$ is given as $(35)$ in Appendix \ref{9-3}. 
\subsubsection{Subcase (III-3)}\label{6-3-3}
Then by the similar argument as in \S \S \ref{6-2} combined with the classification of singularities, the weighted dual graph of $\wDelta _{\kc}$ is given as $(i)$ $(i=36,\dots ,52)$ in Appendix \ref{9-3}. \\

The arguments in \S \S \ref{6-1}--\ref{6-3} complete the proof of Theorem \ref{main}(3). 
\section{Applications of Theorem \ref{main}(3)}\label{7}
\subsection{Proof of Theorems \ref{main(2)} and \ref{main(3)}}
In this subsection, we shall prove Theorems \ref{main(2)} and \ref{main(3)} by applying Theorem \ref{main}(3). 

Let $S$ be an lc del Pezzo surface of rank one defined over $\bk$ such that $\Sing (S_{\kc}) \not= \emptyset$, and let $\sigma :\wS \to S$ be the minimal resolution over $\bk$. 
Then Theorem \ref{main(2)} can be shown as follows:  
\begin{proof}[Proof of Theorem \ref{main(2)}]
By Theorem \ref{main}(3), we know that (a) implies (b) in Theorem \ref{main(2)}. 
Hence, we shall prove the converse of this. 
Assume that there exists a reduced effective divisor $\wDelta$ on $\wS$ as in Theorem \ref{main(2)}(b). 
By the configuration of the weighted dual graph of $\wDelta$, we can construct the birational morphism $\nu :\wS \to \cS$ over $\bk$ such that $\wS \backslash \Supp (\wDelta ) \simeq \cS \backslash \Supp (\cDelta )$ and the weighted dual graph of $\cDelta _{\kc}$ is either $\xygraph{\circ ([]!{+(0,.2)} {^{1}})}$ or $\xygraph{\circ ([]!{+(0,.2)} {^{0}}) -[l] \circ ([]!{+(0,.2)} {^{m}})}$
$(m \not= -1)$ (see also Example \ref{eg(6-1)}, for an example on the construction of $\nu$), where $\cDelta := \nu _{\ast}(\wDelta )$. 
Meanwhile, letting $\Delta := \sigma _{\ast}(\wDelta )$, we see $\wS \backslash \Supp (\wDelta ) \simeq S \backslash \Supp (\Delta )$ since the exceptional locus of $\sigma$ is included in $\Supp (\wDelta )$. 
Moreover, since $\Delta$ is $\bQ$-ample because of $\rho _{\bk}(S)=1$, we know that $S \backslash \Supp (\Delta )$ is affine by {\cite[Theorem 1]{Go69}}. 
Thus, $\cS _{\kc} \backslash \Supp (\cDelta _{\kc}) \simeq \bA ^2_{\kc}$ by Lemma \ref{Kishimoto}. 
Since there is no non-trivial $\bk$-form of $\bA_{\kc}^2$ ({\cite{Ka75}}), we then have $\cS \backslash \Supp (\cDelta ) \simeq \bA ^2_{\bk}$. 
Therefore, $S \backslash \Supp (\Delta ) \simeq \bA ^2_{\bk}$. 
This implies that the assertion (a) in Theorem \ref{main(2)} holds. 
\end{proof}
\begin{rem}
Notice that it is quite subtle to determine whether $S$ contains the affine plane or not by using only the singularity type on $S_{\kc}$. 
In fact, we can construct some examples that two lc del Pezzo surfaces of rank one with the same singularities such that one contains the affine plane but the other does not (see \S \S \ref{8-3}). 
\end{rem}
\begin{eg}\label{eg(6-1)}
With the notation as above, we shall consider a case that there exists a reduced effective divisor $\wDelta$ on $\wS$ such that the exceptional locus of $\sigma$ is included in $\Supp (\wDelta )$ and the weighted dual graph of $\wDelta$ is as (21) in Appendix \ref{9}. 
Let $\wC _1,\dots ,\wC_8$ be all irreducible components of $\wDelta _{\kc}$ named as follows: 
\begin{align*}
\xygraph{
\circ ([]!{+(0,.2)} {^{\wC _8}}) -[l] \circ ([]!{+(0,.2)} {^{\wC _7}})
(- []!{+(-1,.5)} \circ ([]!{+(0,.2)} {^{\wC _5}}) ([]!{+(0,-.25)} {^{-3}}) -[l] \bullet ([]!{+(0,.2)} {^{\wC _3}}) -[l] \circ ([]!{+(0,.2)} {^{\wC _1}}), - []!{+(-1,-.5)} \circ ([]!{+(0,-.25)} {^{-3}}) ([]!{+(0,.2)} {^{\wC _6}}) -[l] \bullet ([]!{+(0,.2)} {^{\wC _4}}) -[l] \circ ([]!{+(0,.2)} {^{\wC _2}}))}
\end{align*}
By the symmetry of this graph, $\wC _1+\wC _2$, $\wC _3+\wC _4$, $\wC _5+\wC _6$, $\wC _7$ and $\wC _8$ are defined over $\bk$. 
Let $\nu :\wS \to \cS$ be the compositions of successive contractions of a disjoint union $\wC _3+\wC _4$, that of the images of $\wC _1+\wC _2$ and finally that of the images of $\wC_5 + \wC _6$. 
By construction, $\nu$ is defined over $\bk$.  
Moreover, putting $\cDelta := \nu _{\ast}(\wDelta )$, then $\cDelta _{\kc}$ consists of two irreducible components $\cC _7 := \nu _{\ast}(\wC _7)$ and $\cC _8 := \nu _{\ast}(\wC _8)$ such that $\cC _7$ and $\cC _8$ are a $0$-curve and a $(-2)$-curve, respectively. 
Thus, $\cS \simeq \bF _2$ and $\cS \backslash (\cC _7 \cup \cC _8) \simeq \bA ^2_{\bk}$. 
In particular, $S \backslash \Supp (\sigma _{\ast}(\wDelta )) \simeq \bA ^2_{\bk}$. 
\end{eg}
From now on, we shall prove Theorem \ref{main(3)}. 
Assume that $S$ has at most Du Val singularities, and let $d$ be the degree of $S$, i.e., $d := (-K_S)^2$. 
Then we will use the following fact: 
\begin{lem}\label{degree}
Let the notation and the assumptions be the same as above. 
If $d \ge 5$, then the pair of the degree and singularity type of $S_{\kc}$ is $(8,A_1)$, $(6,A_2+A_1)$, $(6,A_2)$, $(6,A_1)$ (with $3$ lines) or $(5,A_4)$. 
\end{lem}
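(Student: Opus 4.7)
The plan is a case-by-case analysis based on the classification of singular Du Val del Pezzo surfaces combined with a Galois descent argument to $\bk$. Recall that for the minimal resolution $\sigma :\wS \to S$ of a Du Val del Pezzo surface of degree $d$ over $\kc$, one has $\rho (\wS _{\kc}) = 10-d$ and $\rho _{\kc}(S_{\kc}) = 10 - d - s$, where $s$ is the total number of irreducible exceptional curves of $\sigma _{\kc}$ (equivalently, the sum of the ranks of the singularities in the type $\tau$). I would first list all pairs $(d,\tau )$ with $d \in \{5,6,7,8\}$ and $\tau$ a non-empty $ADE$-configuration that actually arises on a Du Val del Pezzo of degree $d$; this is classical.

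Next, I would translate the hypothesis $\rho _{\bk}(S) = 1$ into a restriction on the Galois action. If $r$ denotes the number of $\Gal$-orbits on the exceptional components of $\sigma _{\kc}$, then by Galois descent $\rho _{\bk}(\wS ) = 1 + r$, so in particular $r \le \rho _{\kc}(\wS _{\kc}) - 1 = 9 - d$. Moreover, the $\Gal$-invariants in $NS(\wS _{\kc})_{\bQ}$ modulo the image of the exceptional classes must be of dimension exactly $1$, which translates into a constraint on how $\Gal$ permutes the $(-1)$-curves on $\wS _{\kc}$.

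For degrees $8$ and $7$ only $\tau = A_1$ is possible: in degree $8$ this gives the listed case $(8,A_1)$ (already of rank $1$ over $\kc$), while in degree $7$ the unique $(-2)$-curve is $\Gal$-stable and combines with the extra $(-1)$-classes on $\wS _{\kc}$ to produce an independent $\Gal$-invariant beyond $-K_S$, ruling that case out. For degree $6$ the admissible types are $A_1$, $2A_1$, $A_2$, and $A_2+A_1$, and by direct inspection of the $(-1)$-curve configurations I would show that the surviving cases are exactly $(6,A_2+A_1)$ (rank $1$ already over $\kc$), $(6,A_2)$ (with $\Gal$ swapping the two $(-1)$-curves emanating from the $A_2$-point), and $(6,A_1)$ with the three $(-1)$-curves through the $A_1$-point forming a single $\Gal$-orbit. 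Degree $5$ is handled analogously: among the types with $\rho _{\kc}(S_{\kc}) \ge 1$, only $A_4$ is compatible with $\rho _{\bk}(S) = 1$, because in every other case at least one superfluous $\Gal$-invariant class persists.

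The main obstacle will be the bookkeeping in degrees $5$ and $6$. In particular, showing that $(6,A_1)$ requires the three $(-1)$-curves through the $A_1$-point to form a single cyclic $\Gal$-orbit (the ``$3$ lines'' condition) and ruling out configurations such as $(5,A_3+A_1)$ or $(5,2A_1+A_2)$, where a descent to rank $1$ over $\bk$ might a priori seem possible, demand a careful analysis of the incidences between $(-1)$-curves and exceptional $(-2)$-curves on the minimal resolution.
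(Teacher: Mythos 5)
Your plan---take the classical list of singular Du Val del Pezzo surfaces of degree $d\ge 5$ over $\kc$ and then analyse the Galois action on the negative curves of the minimal resolution---is sound, and it is in substance the argument that the paper outsources: the paper's proof of this lemma is a one-line appeal to the analysis in {\cite[\S\S 3.1]{Sw}}, which rests on exactly this kind of orbit bookkeeping, so your write-up would just make that argument self-contained. Your descent framework is correct ($\rho_{\bk}(\wS)=\rho_{\bk}(S)+r$, $r$ the number of $\mathrm{Gal}(\kc/\bk)$-orbits of exceptional $(-2)$-curves, valid because $S_{\kc}$ has quotient singularities and is hence $\bQ$-factorial), and the case conclusions you assert are all true; just note that in degree $7$ the precise reason is not the Galois-stability of the $(-2)$-curve itself (which contributes nothing to $\rho_{\bk}(S)$) but that the two lines of the $A_1$ surface are distinguished by whether they pass through the singular point, hence are individually Galois-stable and give $\rho_{\bk}(S)\ge 2$; the same incidence argument disposes of $(6,2A_1)$ and of $(6,A_1)$ with four lines, whose negative-curve graphs are chains with too few symmetries, while $(6,A_2)$ (end lines swapped) and $(6,A_1)$ with the three lines in a single orbit survive. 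One correction: the degree-$5$ types $(5,A_3+A_1)$ and $(5,A_2+2A_1)$ that you propose to rule out do not occur at all, since the root lattice of a quintic del Pezzo surface is $A_4$, which contains no subsystem $A_3+A_1$ or $A_2+2A_1$; the cases actually needing elimination in degree $5$ are $A_1$, $2A_1$, $A_2$, $A_2+A_1$ and $A_3$, and each falls to the incidence analysis you describe (for instance, for $A_3$ one line meets an end component and the other the middle component of the exceptional chain, so they cannot lie in one Galois orbit). With that list corrected and the degree $5$ and $6$ checks actually carried out, your proof is complete; what it buys over the paper's treatment is independence from the external classification in {\cite{Sw}}.
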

\begin{proof}
This assertion follows from the argument in {\cite[\S \S 3.1]{Sw}}. 
\end{proof}
\begin{proof}[Proof of Theorem \ref{main(3)}]
We shall consider the two cases of $\rho _{\kc}(S_{\kc})=1$ or $\rho _{\kc}(S_{\kc})>1$ separately. 

In the case of $\rho _{\kc}(S_{\kc})=1$, then looking for all weighted dual graphs in {\cite[Appendix C]{Ko01}} such that each vertex corresponds to either a $(-1)$-curve or a $(-2)$-curve, we know that such the graphs are summarized in (1), (14), (2), (3), (5), (7) and (12) in {\cite[Appendix C]{Ko01}}, where we assume $n=2$ for graphs (1), (14), (2) and (3), and that the subgraph $A$ consists of only one vertex corresponding to a $(-2)$-curve for graphs (14) and (2). 
Notice that the these graphs correspond to the pair of the degree and singularity type of $S _{\kc}$ $(8,A_1)$, $(6,A_2+A_1)$, $(5,A_4)$, $(4,D_5)$, $(3,E_6)$, $(2,E_7)$ and $(1,E_8)$. 
Moreover, for each graph except for (1), the $(-1)$-curve corresponding to the vertex in this graph is always defined over $\bk$. 
Meanwhile, for graph (1), there exists a curve corresponding to the vertex with the weight zero defined over $\bk$ if and only if $S$ has a smooth $\bk$-rational point. 

In the case of $\rho _{\kc}(S_{\kc})>1$, then looking for all weighted dual graphs in Appendix \ref{9} such that each vertex corresponds to either a $(-1)$-curve or a $(-2)$-curve, we know that such the graphs are summarized in (1), (2), (4), (5), (8), (18), (24) and (26) in Appendix \ref{9}, where we assume $(t,n)=(0,3)$ (resp. $(t,n,m)=(0,2,2)$, $(t,n')=(0,2)$, $(t,n')=(0,1)$, $m=2$) for graphs (1) (resp. (2), (4), (5), (18)) and that the subgraph $A$ consists of only one vertex corresponding to a $(-2)$-curve for graphs (5) and (8). 
Notice that the these graphs correspond to the pair of the degree and singularity type of $S _{\kc}$ $(6,A_1)$ (with $3$ lines), $(6,A_2)$, $(4,A_2)$, $(2,A_6)$, $(4,A_2+2A_1)$, $(4,D_4)$, $(2,E_6)$ and $(3,D_4)$. 
Moreover, for each graph, the union of $(-1)$-curves corresponding to all vertices $\bullet$ is always defined over $\bk$. 

Noticing Lemma \ref{degree}, by using Theorem \ref{main(2)} this completes the proof. 
\end{proof}
\subsection{Application to singular del Pezzo fibrations}
Let $f:X \to Y$ be a dominant and projective morphism between normal varieties defined over $\bC$ such that the generic fiber $X_{\eta}$ of $f$ is a numerical del Pezzo surface of rank one. 
We say that $f$ is a {\it canonical} (resp. an {\it lt}, an {\it lc}) {\it del Pezzo fibration} if $X$ has at most canonical (resp. log terminal, log canonical) singularities. 
By {\cite[Lemma 3]{DK18}}, $f$ admits a vertical $\bA ^2_{\bC}$-cylinder (see {\cite{DK18}}, for this definition) if and only if $X_{\eta}$ contains $\bA ^2_{\bC (Y)}$. 
We shall consider the existing condition of vertical $\bA ^2_{\bC}$-cylinders with respect to $f$. 
Assume that $f$ is a canonical del Pezzo fibration such that $\Sing (X_{\eta ,\overline{\bC (Y)}}) \not= \emptyset$. 
We say that $(-K_{X_{\eta}})^2 \in \{ 1,\dots ,6,8\}$ is the {\it degree} of $f$. 
By Theorem \ref{main(3)} combined with {\cite[Lemma 3]{DK18}}, we have the following: 
\begin{cor}\label{cano}
Let $f:X \to Y$ be a canonical del Pezzo fibration of degree $d$ and let $X_{\eta}$ be the generic fiber of $f$ such that $\Sing (X_{\eta ,\overline{\bC (Y)}}) \not= \emptyset$. 
Then we have the following: 
\begin{enumerate}
\item If $d=8$, then $f$ admits a vertical $\bA ^2_{\bC}$-cylinder if and only if $X_{\eta}$ has a smooth $\bC (Y)$-rational point. 
\item If $d=5$ or $6$, then $f$ always admits a vertical $\bA ^2_{\bC}$-cylinder. 
\item If $d \le 4$, $f$ admits a vertical $\bA ^2_{\bC}$-cylinder if and only if the pair of the degree $d$ and the singularity type of $X_{\eta ,\overline{\bC (Y)}}$ is one of the following: 
\begin{align*}
(4,D_5),\,(4,D_4),\,(4,A_2+2A_1),\,(4,A_2),\,(3,E_6),\,(3,D_4), \, (2,E_7),\,(2,E_6),\,(2,A_6),\,(1,E_8). 
\end{align*}
\end{enumerate}
\end{cor}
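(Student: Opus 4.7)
The plan is to apply the two named results together; the proof is essentially a translation. The hypothesis that $f$ is a canonical del Pezzo fibration of degree $d$ with $\Sing(X_{\eta,\overline{\bC(Y)}}) \neq \emptyset$ means that the generic fiber $X_\eta$, viewed as a variety over the field $\bk := \bC(Y)$, is a normal projective surface of rank one whose base extension to $\overline{\bk}$ has only Du Val singularities (since in dimension two canonical equals Du Val, and this property is preserved by passage to the generic fiber), with $(-K_{X_\eta})^2 = d$ and $\Sing(X_{\eta,\overline{\bk}}) \neq \emptyset$. In particular, $X_\eta$ is a Du Val del Pezzo surface of rank one over the characteristic zero field $\bk = \bC(Y)$ of degree $d$, so Theorem \ref{main(3)} applies to $X_\eta$ verbatim.

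Next I would invoke \cite[Lemma 3]{DK18}, which is recalled in the paragraph preceding the corollary: the existence of a vertical $\bA^2_{\bC}$-cylinder with respect to $f$ is equivalent to $X_\eta$ containing $\bA^2_{\bC(Y)}$ as a Zariski open subset. With this equivalence in hand, the three cases (1), (2), (3) of Corollary \ref{cano} follow immediately from the three corresponding cases (1), (2), (3) of Theorem \ref{main(3)}, by setting $\bk = \bC(Y)$, $S = X_\eta$. In case (1), note that the smooth $\bk$-rational point condition in Theorem \ref{main(3)}(1) translates to $X_\eta$ having a smooth $\bC(Y)$-rational point. In cases (2) and (3), the singularity type listed is exactly the singularity type of $X_{\eta,\overline{\bC(Y)}}$.

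There is essentially no obstacle: the only minor point to verify is that the characteristic zero hypothesis of Theorem \ref{main(3)} is met (it is, since $\bC(Y)$ has characteristic zero) and that the rank-one and singularity conditions transfer from $X$ to $X_\eta$ as required, which is standard. Hence the corollary is a direct consequence of Theorem \ref{main(3)} combined with \cite[Lemma 3]{DK18}, with no further computation.
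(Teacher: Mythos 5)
Your proposal is correct and is exactly the paper's argument: the paper deduces Corollary \ref{cano} directly from Theorem \ref{main(3)} applied to the generic fiber $X_\eta$, regarded as a Du Val del Pezzo surface of rank one over $\bk=\bC(Y)$, together with the equivalence from {\cite[Lemma 3]{DK18}} between vertical $\bA^2_{\bC}$-cylinders for $f$ and $\bA^2_{\bC(Y)}$-subsets of $X_\eta$. Your remarks on the transfer of the canonical (hence Du Val) condition and the characteristic-zero hypothesis are precisely the implicit verifications the paper relies on.
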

\begin{rem}
If a canonical del Pezzo fibration $f:X \to Y$ of degree $d$, whose generic fiber $X_{\eta}$ of $f$ satisfies $\Sing (X_{\eta ,\overline{\bC (Y)}}) = \emptyset$, then by {\cite{DK18}} we know that $f$ admits a vertical $\bA ^2_{\bC}$-cylinder if and only if $d \ge 8$ and $X_{\eta}$ has a $\bC (Y)$-rational point. 
\end{rem}
\begin{eg}
Let $\sO$ be a discrete valuation ring of $\bC (t)$ such that the maximal ideal of $\sO$ is generated by $t$ and let $X$ be the three-dimensional algebraic variety over $\bC$ defined by: 
\begin{align*}
X := (tw^2+xy^3+z^4+yzw=0) \subseteq \bP _{\sO}(1,1,1,2) = \Proj (\sO [x,y,z,w]). 
\end{align*}
Let $f:X \to \Spec (\sO )$ be the structure morphism as $\sO$-scheme and let $\eta$ be the generic point of $\Spec (\sO )$. 
Then the generic fiber $X_{\eta}$ of $f$ is an irreducible quartic hypersurface of the weighted projective space given by: 
\begin{align*}
X_{\eta} = (tw^2+xy^3+z^4+yzw=0) \subseteq \bP _{\bC (t)}(1,1,1,2) = \Proj (\bC (t)[x,y,z,w]). 
\end{align*}
Then $X_{\eta ,\overline{\bC (t)}}$ is a Du Val del Pezzo surface of degree $2$ with exactly one singular point $p := [1\!:0\!:\!0\!:\!0]$ of type $E_6$. 
Note that the weighted dual graph of all $(-1)$-curves and $(-2)$-curves on the minimal resolution of $X_{\eta ,\overline{\bC (Y)}}$ is as $(11^{o})$ in {\cite[p. 349]{CP21}}. 
Hence, we see that $f$ is a canonical del Pezzo fibration of degree $2$. 
Moreover, it admits a vertical $\bA ^2 _{\bC}$-cylinder by Corollary \ref{cano}. 
\end{eg}
Notice that Theorem \ref{main(2)} provides a way to determine whether lt del Pezzo fibrations and lc del Pezzo fibrations admit vertical $\bA ^2_{\bC}$-cylinders or not. 
\section{Remarks on Theorem \ref{main}}\label{8}
\subsection{Existence of compactifications of the affine plane}
In this subsection, we shall discuss whether there exists indeed a compactification of the affine plane into an lc del Pezzo surface of rank one corresponding to the weighted dual graph $(i)$ in Appendix \ref{9} for each $i=1,\dots ,52$. 

We shall prepare the following condition (\ref{condition}) with respect to the base field $\bk$: 
\begin{align*}
\label{condition}\tag{$\ast$}
\text{For any $n \in \bZ _{>0}$,\ } \text{\ there exist two elements in\ } \kc\text{, which are not Galois conjugate over\ } \bk \text{, }\\ \text{\ such that their minimal polynomials over\ } \bk \text{\ are of degree\ } 2 \text{\ and of degree\ } n \text{, respectively. }
\end{align*}
For instance, the rational number field $\bQ$ and any rational function field satisfy the above condition (\ref{condition}). 
Meanwhile, the real number field $\bR$ does not satisfy (\ref{condition}). 

Letting $G$ be one of the weighted dual graphs in Appendix \ref{9}, assume that there exists a compactification $(S,\Delta )$ of the affine plane $\bA ^2_{\bk}$ into an lc del Pezzo surface $S$ of rank one over $\bk$ corresponding to this graph $G$. 
More precisely, letting $\sigma :\wS \to S$ be the minimal resolution and letting $\wDelta := \sigma ^{\ast}(\Delta )_{\red}$, then the weighted dual graph of $\wDelta _{\kc}$ is the same as $G$. 
By Lemmas \ref{lem(2-2)} and \ref{not mnc}, we notice that all $(-1)$-curves, which are included in $\Supp (\wDelta _{\kc})$, lie in the same $\Gal$-orbit. 
Hence, we can completely see the configuration of $\Gal$-orbits of each irreducible component of $\wDelta _{\kc}$. 
More precisely, one of the following four situations occurs:  

{\it Situation 1:} 
There exist connecting two vertices corresponding to two curves, which lie in the same $\Gal$-orbit. 
Moreover, there exist exactly two vertices $v^{(1)}$ and $v^{(2)}$ such that they are respectively connected to $n_2$-times of vertices, in which $2n_2$-times of curves corresponding to these vertices lie in the same $\Gal$-orbit, where $n_2 \ge 2$. 
Thus, by the configuration of $G$ there exist two elements in $\kc$, which are not Galois conjugate over $\bk$, such that their minimal polynomials over $\bk$ are of degree $2$ and of degree $n_2$, respectively. 
This situation only occurs when the graph $G$ is one of the following graphs in Appendix \ref{9}: 
\begin{itemize}
\item (4), (5), where $n_2:=n'$ with $n' \ge 2$; 
\item (35), where $n_2:=2$. 
\end{itemize}

{\it Situation 2:} 
There exist connecting two vertices corresponding to two curves, which lie in the same $\Gal$-orbit. 
However, there are no vertices $v^{(1)}$ and $v^{(2)}$ as in Situation 1. 
Thus, by the configuration of $G$ there exists an element in $\kc$ such that its minimal polynomial over $\bk$ is of degree $2$. 
This situation occurs when the graph $G$ is one of the graphs (5) with $n'=1$, (8) and (32) in Appendix \ref{9}. 

{\it Situation 3:} 
There exists a unique vertex, which corresponds to a curve defined over $\bk$, such that it is connected to exactly $n_1$-times of vertices corresponding to curves, which lie in the same $\Gal$-orbit, where $n_1 \ge 2$. 
Moreover, there exist exactly $n_1$-times of vertices $v^{(1)},\dots,v^{(n_1)}$ such that they are respectively connected to $n_2$-times of vertices, in which $n_1n_2$-times of curves corresponding to these vertices lie in the same $\Gal$-orbit, where $n_2 \ge 2$. 
Thus, by the configuration of $G$ there exist two elements in $\kc$, which are not Galois conjugate over $\bk$, such that their minimal polynomials over $\bk$ are of degree $n_1$ and of degree $n_2$, respectively. 
This situation only occurs when the graph $G$ is one of the following graphs in Appendix \ref{9}: 
\begin{itemize}
\item (6), (7), where $(n_1,n_2):=(2,n')$ with $n' \ge 2$; 
\item (20), (35), (45), where  $(n_1,n_2):=(2,2)$; 
\item (37), where $(n_1,n_2):=(2,3)$; 
\item (48), where $(n_1,n_2):=(3,2)$. 
\end{itemize}

{\it Situation 4:} 
There exists a unique vertex, which corresponds to a curve defined over $\bk$ such that it is connected to exactly $n_1$-times of vertices corresponding to curves, which lie in the same $\Gal$-orbit, where $n_1 \ge 2$. 
However, there are no vertices $v^{(1)},\dots,v^{(n_1)}$ as in Situation 3. 
Thus, by the configuration of $G$ there exists an element in $\kc$ such that its minimal polynomial over $\bk$ is of degree $n_1$. 
This situation only occurs when the graph $G$ is one of the following graphs in Appendix \ref{9}: 
\begin{itemize}
\item (1), where $n_1:= n$ with $n \ge 3$; 
\item (2), (3), where $n_1:= n$ with $n \ge 2$; 
\item (36), where $n_1:= 5$; 
\item (14), (30), (31), where $n_1:= 4$; 
\item (13), (26), (27), (28), (29), (49), (50), (51), (52), where $n_1:=3$; 
\item Otherwise, where $n_1:=2$. 
\end{itemize}

In summary, if for any weighted dual graph in Appendix \ref{9} there exists a compactification of the affine plane over $\bk$ into an lc del Pezzo surface of rank one corresponding to this graph, then the base field $\bk$ must satisfy the condition (\ref{condition}). 
\begin{eg}
Let $(S,\Delta )$ be a comapctification of the affine plane $\bA ^2_{\bR}$ over the real number field $\bR$, let $\sigma :\wS \to S$ be the minimal resolution over $\bR$ and let us put $\wDelta := \sigma ^{\ast}(\Delta )_{\red}$. 
Assume that $S$ is an lc del Pezzo surface of rank one over $\bR$ such that $\Sing (S_{\bC})\not= \emptyset$. 
Notice that $\bR$ does not satisfy the condition (\ref{condition}) because of ${\rm Gal}(\bC /\bR ) \simeq \bZ /2\bZ$. 
In particular, the weighted dual graph of $\wDelta _{\bC}$ does not appear in the list of Situations 1 or 3. 
If the weighted dual graph of $\wDelta _{\bC}$ occurs in Situation 2, then this graph is one of (5) with $n'=1$, (8) and (32) in Appendix \ref{9}.
If the weighted dual graph of $\wDelta _{\bC}$ occurs in Situation 4, then this graph is one of (5) with $n'=1$, (2)--(3) with $n=2$, (6)--(7) with $n'=1$, (9)--(12), (15)--(19), (21)--(25), (33)--(34), (38)--(44), (46) and (47) in Appendix \ref{9}. 
\end{eg}
Conversely, assuming that $\bk$ satisfies the condition (\ref{condition}), let $G$ be one of the weighted dual graphs in Appendix \ref{9}. 
Then we can explicitly construct a compactification $(S,\Delta )$ of $\bA ^2_{\bk}$ into an lc del Pezzo surface $S$ of rank one over $\bk$. 
We explain the method of this construction. 
Let $(\cS ,\cDelta )$ be the mnc of $\bA ^2_{\bk}$ over $\bk$ according to the configuration of $G$ as follows: 
\begin{itemize}
\item If $G$ is as the graph $(1)$, $(6)$, $(7)$, $(9)$, $(10)$, $(26)$, $(27)$, $(30)$, $(31)$, $(33)$, $(34)$, $(48)$, $(49)$, $(50)$, $(51)$ or $(52)$ in Appendix \ref{9}, then $(\cS ,\cDelta) := (\bP ^2_{\bk},L)$, where $L$ is a general line on $\cS \simeq \bP ^2_{\bk}$; 
\item If $G$ is as the graph $(4)$, $(5)$, $(8)$, $(32)$ or $(35)$ in Appendix \ref{9}, then $\cS$ is a $\bk$-form of $\bP ^1_{\kc} \times \bP ^1_{\kc}$ of rank one and $\cDelta := F_1+F_2$, where $F_1$ and $F_2$ are $\bk$-forms of irreducible curves of type $(1,0)$ and $(0,1)$, respectively. Notice that $\cDelta$ is defined over $\bk$; 
\item If $G$ is as the graph $(11)$, $(13)$, $(14)$, $(15)$, $(16)$, $(17)$, $(20)$, $(21)$, $(22)$, $(23)$, $(24)$, $(25)$, $(28)$, $(29)$, $(36)$, $(37)$, $(38)$, $(39)$, $(43)$ or $(44)$ in Appendix \ref{9}, then $(\cS ,\cDelta ) := (\bF _2,M+F)$, where $M$ and $F$ are the minimal section and a general fiber of the structure morphism $\bF _2 \to \bP ^1_{\bk}$ over $\bk$, respectively; 
\item If $G$ is as the graph $(12)$, $(40)$, $(41)$, $(42)$, $(45)$, $(46)$ or $(47)$ in Appendix \ref{9}, then $(\cS ,\cDelta ) := (\bF _3,M+F)$, where $M$ and $F$ are the minimal section and a general fiber of the structure morphism $\bF _3 \to \bP ^1_{\bk}$ over $\bk$, respectively; 
\item If $G$ is as the graph $(2)$, $(3)$, $(18)$ or $(19)$ in Appendix \ref{9}, then $(\cS ,\cDelta) := (\bF _m ,M+F)$, where $M$ and $F$ are the minimal section and a general fiber of the structure morphism $\bF _m \to \bP ^1_{\bk}$ over $\bk$, respectively. 
\end{itemize}
Then we can construct two birational morphisms $\nu :\wS \to \cS$ and $\sigma :\wS \to S$ over $\bk$ such that the weighted dual graph of $\wDelta _{\kc}$ is the same as $G$ and $(S,\Delta)$ is a compactification of $\bA ^2_{\bk}$ into an lc del Pezzo surface $S$ of rank one over $\bk$, where $\wDelta := \nu ^{\ast}(\cDelta )_{\red}$ and $\Delta := \sigma _{\ast}(\wDelta )$. 
For specific construction of the above two birational morphisms, see the following example (notice that we can construct in a similar way for other cases): 
\begin{eg}
Assume that the base field $\bk$ satisfies (\ref{condition}), and let $G$ be the weighted dual graph of $\wDelta$ is as (21) in Appendix \ref{9}. 
Since (\ref{condition}) holds, there exist two elements $a,b \in \kc$, which are not Galois conjugate over $\bk$, such that there are exactly two (resp. three) elements $a_1,a_2 \in \kc$ (resp. $b_1,b_2,b_3 \in \kc$), which are Galois conjugates of $a$ (resp. $b$) over $\bk$, where $a_1 := a$ and $b_1 := b$. 
Let $P(t) \in \bk [t]$ be the minimal polynomial for $a$ over $\bk$. 
Now, put $\cS := \bF _2$, and let $F$ and $M$ be a fiber and the minimal section of the structure morphism $\cS \simeq \bF _2 \to \bP ^1_{\bk}$ over $\bk$, respectively. 
Then we shall take an affine open neighborhood $U \simeq \Spec (\bk [x,y])$ such that $\ell := F \cap U \simeq (x=0) \subseteq \bA ^2_{\bk}$. 
Let $\nu ' : \cS ' \to \cS _{\kc}$ be the blow-up at two points $(0,a_i) \in \bA ^2_{\kc}$ for $i=1,2$. 
Note that $\nu '$ is defined over $\bk$. 
Then the pullback ${\nu '}^{-1}(\ell )$ and the exceptional set $E$ of $\nu '$ can be written by $(u=0)$ and $(P(y)=0)$ in $\bA ^1_{\bk} \times \bP ^1_{\bk} = \Spec (\bk [y]) \times \Proj (\bk [u,v])$, respectively. 
Hence, we can construct the blow-up $\nu '' :\wS \to \cS ' _{\kc}$ at six points $a_i \times [1\! :\! b_j] \in \bA ^1_{\kc} \times \bP ^1_{\kc}$ for $i=1,2$ and $j=1,2,3$. 
Noticing $\nu ''$ is defined over $\bk$, so is $\nu := \nu ' \circ \nu ''$. 
Now, let $\wE$ be the reduced exceptional divisor of $\nu$, and put $\widetilde{F} := \nu ^{-1}_{\ast}(F)$ and $\widetilde{M} := \nu ^{-1}_{\ast}(M)$. 
Then the weighted dual graph of the reduced divisor $\wDelta := \wE + \widetilde{F} + \widetilde{M}$ on $\wS$ is as in $G$. 
Moreover, we know that ${\nu ''}^{-1}_{\ast}(E) + \widetilde{F} + \widetilde{M}$ can be contracted, hence, we obtain this contraction $\sigma :\wS \to S$ over $\bk$. 
By construction, letting $\Delta := \sigma _{\ast}(\wDelta )$, we see that $(S,\Delta )$ is certainly a compactification of $\bA ^2_{\bk}$ such that $S$ is an lc del Pezzo surfaces of rank one over $\bk$. 
\end{eg}
\subsection{Maximal number of singular points}
Let $(S,\Delta )$ be a compactification of the affine plane $\bA ^2_{\bk}$ over $\bk$. 
Assume that $S$ is an lc del Pezzo surface of rank one defined over $\bk$ such that $\Sing (S_{\kc}) \not= \emptyset$. 
By Theorem \ref{main}(2)(iii), we obtain $\sharp \Sing (S_{\kc}) \le \rho _{\kc}(S_{\kc})+1$, which can be regarded as a generalization of the case of $\bk = \kc$ (see Lemma \ref{lem(2-3)}). 
In particular, assuming that $\bk$ satisfies (\ref{condition}), for any positive integer $n$, there exists a log del Pezzo surface $S_n$ of rank one defined over $\bk$ containing $\bA ^2_{\bk}$ such that $\sharp \Sing (S_{n,\kc})=n+1$. 
Indeed, it follows from {\cite{Ko01}} (resp. the weighted dual graph (2) in Appendix \ref{9}) if $n=1$ (resp. $n \ge 2$). 
Meanwhile, we see $\sharp \Sing (S_{\kc}) \le 4$ (resp. $\sharp \Sing (S_{\kc}) \le 5$) if $S_{\kc}$ has a non-cyclic quotient singular point (resp. log canonical but not a quotient singular point) by Theorem \ref{main}(3) (see also Appendix \ref{9}). 
\subsection{Converse of Theorem \ref{main}(3)}\label{8-3}
Let $S$ be an lc del Pezzo surface of rank one over $\bk$. 
If $S$ contains $\bA ^2_{\bk}$, we classify this boundary divisor by Theorem \ref{main}(3). 
Now, we shall consider this converse. 
According to Theorem \ref{main(2)}, if there exists a reduced divisor $\wDelta$ on $\wS$ such that the exceptional set of $\sigma$ is included in $\Supp (\wDelta )$, each irreducible component of $\wDelta _{\kc}$ is a rational curve and the weighted dual graph of $\wDelta _{\kc}$ is one of the graphs in {\cite[Appendix C]{Ko01}}, {\cite[Fig. 1]{KT09}} or Appendix \ref{9}, then $S$ contains $\bA ^2_{\bk}$. 
Here, notice that $\wDelta$ certainly depends on the singularity type of $S_{\kc}$. 
Hence, we shall further consider the following problem: 
\begin{prob}[cf.\ {\cite[Problem 1]{Ko01}}]\label{converse}
Let $S$ be an lc del Pezzo surface of rank one over $\bk$. 
Assume that the singularity type of $S_{\kc}$ is given as one of the graphs in {\cite[Appendix C]{Ko01}}, {\cite[Fig. 1]{KT09}} or Appendix \ref{9}. 
Does then $S$ contain the affine plane $\bA ^2_{\bk}$?
\end{prob}
In the case of $\bk = \kc$, Problem \ref{converse} is not true in general ({\cite[\S 4]{Ko01}). 
However, Problem \ref{converse} is true if $S$ has a singular point, which is not a cyclic quotient singularity, 
({\cite{KT09,KT18}}) or $S$ is a log del Pezzo surface of index $\le 3$ ({\cite{MZ88,Ko03,Ko01}}). 
On the other hand, in the case of $\bk \not= \kc$, we find some counter-examples of Problem \ref{converse} as follows: 
\begin{eg}\label{E6}
Let $S$ be a Du Val del Pezzo surface of rank one, and let $\sigma :\wS \to S$ be the minimal resolution. 
Assume that $S_{\kc}$ has only one singular point $p$ of type $E_6$ and $\rho _{\bk}(\wS ) - \rho _{\bk}(S)=4$ (i.e., following the notation of {\cite{Sw}}, $p$ is of type $E_6^+$ on $S$). 
Then the degree $d := (-K_S)^2$ of $S$ is equal to $1$ or $2$. 
If $d=2$, then $S$ contains $\bA ^2_{\bk}$ since $\wS _{\kc}$ contains a reduced effective divisor with a weighted dual graph as (24) in Appendix \ref{9}. 
If $d=1$, then $S$ does not contain $\bA ^2_{\bk}$ since it does not contain any cylinder by {\cite[Theorem 1.6]{Sw}}. 
\end{eg}
\begin{eg}
Assume that there exist two elements in $\kc$ such that their minimal polynomials over $\bk$ are of degree $2$ and $4$, respectively. 
Let us fix the Hirzebruch surface $\bF _3$ of degree $3$ defined over $\bk$ and let $M$ be the minimal section of the structure morphism $\pi :\bF _3 \to \bP ^1_{\bk}$. 
Let $F_1,\dots ,F_4$ be four fibers of $\pi _{\kc}$, which lie in the same $\Gal$-orbit, and let $\{ x_{i,j}\} _{1 \le i \le 4,\, 1 \le j \le 2}$ be eight points on $\bF _3$ such that $x_{i,1}$ and $x_{i,2}$ lie in both the fiber $F_i$ of $\pi$ and the same $\Gal$-orbit for $i=1,\dots ,4$, and $\sum _{i=1}^4\sum_{j=1}^2x_{i,j}$ is defined over $\bk$. 
Note that we can certainly take these points by the assumption. 
Letting $\nu :\wS \to \bF _3$ be a blow-up at $\{ x_{i,j}\} _{1 \le i \le 4,\, 1 \le j \le 2}$, the weighted dual graph of $\nu ^{\ast}(M+F_1+\dots +F_4)_{\red}$ is as follows: 
\begin{align*}
\xygraph{
\circ ([]!{+(0,-.25)} {^{-3}}) 
((- []!{+(-.8,.2)} \circ (- []!{+(-.8,.1)} \bullet ,- []!{+(-.8,-.1)} \bullet ),- []!{+(-.8,-.2)} \circ (- []!{+(-.8,.1)} \bullet ,- []!{+(-.8,-.1)} \bullet )), (- []!{+(.8,.2)} \circ (- []!{+(.8,.1)} \bullet ,- []!{+(.8,-.1)} \bullet ),- []!{+(.8,-.2)} \circ (- []!{+(.8,.1)} \bullet ,- []!{+(.8,-.1)} \bullet )))}
\end{align*}

Let $\sigma :\wS \to S$ be the contraction of $\nu ^{-1}_{\ast}(M+F_1+\dots +F_4)$. 
By construction, $S$ is then an lc del Pezzo surface of rank one, which has a log canonical but not a quotient singular point. 
In particular, the singularity type of $S_{\kc}$ is the same singularity type as (30) in Appendix \ref{9}, however, $S$ does not contain $\bA ^2_{\bk}$ by Theorem \ref{main}(3). 
\end{eg}
\begin{eg}
Let $m$ be a positive integer, let $C$ be the plane conic over $\bQ$ defined by $(xz=y^2) \subseteq \bP ^2_{\bQ} = \Proj (\bQ [x,y,z])$ and let $x_1,\dots ,x_{2m+4}$ be points on $C_{\overline{\bQ}}$ given by $x_i := [1\!:\!\sqrt[2m+4]{2} \zeta ^i\!:\! \sqrt[m+2]{2}\zeta ^{2i}] \in \bP ^2_{\overline{\bQ}}$ for $i=1,\dots ,2m+4$, where $\zeta := \exp \left( \frac{\pi \sqrt{-1}}{m+2}\right)$. 
Then $\sum _{i=1}^{2m+4}x_i$ is defined over $\bQ$. 
Let $\nu :\wS \to \bP ^2_{\bQ}$ be a blow-up at $x_1,\dots ,x_{2m+4}$ over $\bQ$ and let $\sigma :\wS \to S$ be the contraction of $\nu ^{-1}_{\ast}(C)$, which is a $\bQ$-form of $(-2m)$-curve. Then $S$ is a log del Pezzo surface of rank one over $\bQ$. 
Since $S_{\overline{\bQ}}$ has exactly one singular point, whose singularity type is the same as the singular point on $\bP _{\overline{\bQ}}(1,1,2m)$ over $\overline{\bQ}$, we see that $S$ is of index $m$. 
space $\bP _{\overline{\bQ}}(1,1,2m)$ over $\overline{\bQ}$, we see that $S$ is of index $m$. 
However, $S$ does not contain $\bA ^2_{\bQ}$ by Theorem \ref{main}(3). 
Meanwhile, the weighted projective space $\bP _{\bQ}(1,1,2m)$ is a log del Pezzo surface of rank one and of index $m$, and contains $\bA ^2_{\bQ}$. 
\end{eg}
\appendix
\section{List of configurations}\label{9}
Letting the notation and the assumptions be the same as in Theorem \ref{main}(3), 
we summarize configurations of all weighted dual graphs of $\wDelta _{\kc}$, where we employ the following notation: 
\begin{itemize}
\item For the following all weighted dual graphs, $t$, $t'$ and $m$ are arbitrary integers with $t \ge 0$, $t' \ge 0$ and $m \ge 2$. 
\item In (4), (5), (6) or (7), assume that $n$ is even and let $n'$ be the integer with $2n'=n$. 
\item In (3), (5), (7), (8) or (10), the subgraph $A$ means an arbitrary admissible twig, and $m_{A}$ means the integer as in Definition \ref{mA}. Moreover, the subgraphs ${}^tA$ and ${A}^*$ denote the transposal and adjoint of $A$, respectively (see Definition \ref{def(3-1)}). 
Meanwhile, if $A$ can be written by $\xygraph{\circ ([]!{+(0,-.3)} {^{-m_1}}) - []!{+(.8,0)} \circ ([]!{+(0,-.3)} {^{-m_2}}) - []!{+(.8,0)} \cdots - []!{+(.8,0)} \circ ([]!{+(0,-.3)} {^{-m_r}})}$, the subgraph $\underline{A}$ denotes $\xygraph{\circ ([]!{+(0,-.3)} {^{-m_1}}) - []!{+(.8,0)} \circ ([]!{+(0,-.3)} {^{-m_2}}) - []!{+(.8,0)} \cdots - []!{+(.8,0)} \circ ([]!{+(0,-.3)} {^{-m_{r-1}}})}$. 
\item The subgraphs $U(t)$ and $L (m;t)$ mean 
$\left. \begin{array}{l c}
\ldelim\{{5}{34pt}[{\scriptsize $t$-vertices}] & \xygraph{\circ} \\
& {\xygraph{(- []!{+(0,-.1)} ,- []!{+(0,.1)} )}} \\
& \xygraph{\vdots} \\
& {\xygraph{(- []!{+(0,-.1)} ,- []!{+(0,.1)} )}} \\
& \xygraph{\circ} 
\end{array} \right. $ and $\xygraph{\circ ([]!{+(0,-.3)} {^{-m}}) - []!{+(-.8,0)} \circ - []!{+(-.8,0)} \cdots ([]!{+(0,-.3)} {\underbrace{\quad \qquad \qquad}_{t\text{-vertices}}}) - []!{+(-.8,0)} \circ}$, respectively. Also, the subgraphs $R(m;t)$ denotes ${}^tL(m;t)$. 
\end{itemize}
Noting that $S_{\kc}$ contains exactly one singular point $p_0$, which is $\bk$-rational, by Theorem \ref{main}(2)(ii) and Lemma \ref{lem(2-2)}, the list is divided into three case about the singularity of $p_0$ (see \S\S \ref{6-1}-- \S\S \ref{6-3}, for detail): 
\subsection{Case (I)}\label{9-1} 
In this case, there are $10$ cases (1)--(10): 

\noindent
(1) 
$\xygraph{
{^{U(t)}} - []!{+(0,-.7)} \bullet ([]!{+(.5,.3)} {\cdots},- []!{+(.5,-.5)} \circ ([]!{+(0,-.3)} {^{-(t+1)n+1}}) (- []!{+(.5,.5)} \bullet - []!{+(0,.7)} {^{U(t)}} []!{+(-.5,0)} ([]!{+(0,.4)} {\overbrace{\qquad \qquad}^{n\text{-times}}}), []!{+(1.3,0)} {(n \ge 3)}))}$
\quad
(2) 
$\xygraph{
{^{U(t)}} - []!{+(0,-.7)} \bullet ([]!{+(.5,.3)} {\cdots},- []!{+(.5,-.5)} \circ ([]!{+(0,-.3)} {^{-(t+1)n+1}}) (- []!{+(.5,.5)} \bullet - []!{+(0,.7)} {^{U(t)}} []!{+(-.5,0)} ([]!{+(0,.4)} {\overbrace{\qquad \qquad}^{n\text{-times}}}), -[]!{+(1,0)} \circ ([]!{+(0,-.3)} {^{-m}}) []!{+(.8,0)} {(n \ge 3)}))}$
\quad
(3) 
$\xygraph{
{^{U(t)}} - []!{+(0,-.7)} \bullet ([]!{+(.5,.3)} {\cdots},- []!{+(.5,-.5)} \circ ([]!{+(0,-.3)} {^{-(t+1)n+1}}) (- []!{+(.5,.5)} \bullet - []!{+(0,.7)} {^{U(t)}} []!{+(-.5,0)} ([]!{+(0,.4)} {\overbrace{\qquad \qquad}^{n\text{-times}}}), ( -[]!{+(-1,0)} A, -[]!{+(1,0)} {A^{\ast}} -[]!{+(1,0)} \circ ([]!{+(0,-.3)} {^{-m}}) []!{+(.8,0)} {(n \ge 3)})))}$

\noindent
(4) 
$\xygraph{
{^{U(t)}} - []!{+(0,-.7)} \bullet ([]!{+(.5,.3)} {\cdots},- []!{+(.5,-.5)} \circ ([]!{+(0,-.3)} {^{-(t+1)n'}}) (- []!{+(.5,.5)} \bullet - []!{+(0,.7)} {^{U(t)}} []!{+(-.5,0)} ([]!{+(0,.4)} {\overbrace{\qquad \qquad}^{n'\text{-times}}}), - []!{+(1.75,0)} \circ ([]!{+(0,-.3)} {^{-(t+1)n'}}) (- []!{+(-.5,.5)} \bullet ([]!{+(.5,.3)} {\cdots} ,- []!{+(0,.7)} {^{U(t)}} []!{+(.5,0)} ([]!{+(0,.4)} {\overbrace{\qquad \qquad}^{n'\text{-times}}})),- []!{+(.5,.5)} \bullet - []!{+(0,.7)} {^{U(t)}}))}$
\qquad
(5) 
$\xygraph{
{^{U(t)}} - []!{+(0,-.7)} \bullet ([]!{+(.5,.3)} {\cdots},- []!{+(.5,-.5)} \circ ([]!{+(0,-.3)} {^{-(t+1)n'}}) (- []!{+(.5,.5)} \bullet - []!{+(0,.7)} {^{U(t)}} []!{+(-.5,0)} ([]!{+(0,.4)} {\overbrace{\qquad \qquad}^{n'\text{-times}}}), (-[]!{+(-1.2,0)} {{}^t({A}^*)},
-[]!{+(1,0)} {{}^tA} -[r] {A}- []!{+(1,0)} \circ ([]!{+(0,-.3)} {^{-(t+1)n'}}) (-[]!{+(1.2,0)} {A^*},
(- []!{+(-.5,.5)} \bullet ([]!{+(.5,.3)} {\cdots} ,- []!{+(0,.7)} {^{U(t)}} []!{+(.5,0)} ([]!{+(0,.4)} {\overbrace{\qquad \qquad}^{n'\text{-times}}})),- []!{+(.5,.5)} \bullet - []!{+(0,.7)} {^{U(t)}}))))}$

\noindent
(6) 
$\xygraph{
{^{U(t)}} - []!{+(0,-.7)} \bullet ([]!{+(.5,.3)} {\cdots},- []!{+(.5,-.5)} \circ ([]!{+(0,-.3)} {^{-(t+1)n'-1}}) (- []!{+(.5,.5)} \bullet - []!{+(0,.7)} {^{U(t)}} []!{+(-.5,0)} ([]!{+(0,.4)} {\overbrace{\qquad \qquad}^{n'\text{-times}}}), (-[]!{+(-1.25,0)} {^{L (2;t')}},
-[]!{+(1.25,0)}\circ ([]!{+(0,-.3)} {^{-2t'-3}}) -[]!{+(1.25,0)} \circ ([]!{+(0,-.3)} {^{-(t+1)n'-1}}) (-[]!{+(1.25,0)} {^{R (2;t')}},
(- []!{+(-.5,.5)} \bullet ([]!{+(.5,.3)} {\cdots} ,- []!{+(0,.7)} {^{U(t)}} []!{+(.5,0)} ([]!{+(0,.4)} {\overbrace{\qquad \qquad}^{n'\text{-times}}})),- []!{+(.5,.5)} \bullet - []!{+(0,.7)} {^{U(t)}}))))}$

\noindent
(7) 
$\xygraph{
{^{U(t)}} - []!{+(0,-.7)} \bullet ([]!{+(.5,.3)} {\cdots},- []!{+(.5,-.5)} \circ ([]!{+(0,-.3)} {^{-(t+1)n'-1}}) (- []!{+(.5,.5)} \bullet - []!{+(0,.7)} {^{U(t)}} []!{+(-.5,0)} ([]!{+(0,.4)} {\overbrace{\qquad \qquad}^{n'\text{-times}}}), (-[]!{+(-1.25,0)} {{}^t(\underline{A^*})} -[]!{+(-1.25,0)} {^{L (m_A;t')}},
-[]!{+(1,0)} {{}^tA} -[]!{+(.8,0)}\circ ([]!{+(0,-.3)} {^{-2t'-3}}) -[]!{+(.8,0)} {A} -[]!{+(1,0)} \circ ([]!{+(0,-.3)} {^{-(t+1)n'-1}}) (-[]!{+(1.25,0)} {\underline{A^*}}-[]!{+(1.25,0)} {^{R (m_A;t')}},
(- []!{+(-.5,.5)} \bullet ([]!{+(.5,.3)} {\cdots} ,- []!{+(0,.7
)} {^{U(t)}} []!{+(.5,0)} ([]!{+(0,.4)} {\overbrace{\qquad \qquad}^{n'\text{-times}}})),- []!{+(.5,.5)} \bullet - []!{+(0,.7)} {^{U(t)}}))))}$

\noindent
(8) $\xygraph{
{{}^t({A}^*)} -[r] \bullet -[]!{+(.8,0)} {{}^tA} -[]!{+(.8,0)} {A} -[]!{+(.8,0)} \bullet -[r]{{A}^*}}$
\quad
(9) $\xygraph{
{^{L (2;t)}} -[]!{+(1,0)} \bullet -[]!{+(.8,0)} \circ ([]!{+(0,-.3)} {^{-2t-3}}) -[]!{+(.8,0)} \bullet -[]!{+(1,0)} {^{R (2;t)}}}$

\noindent
(10) $\xygraph{
{^{L (m_{A};t)}} -[]!{+(1.25,0)} {{}^t(\underline{{A}^*})} -[]!{+(1,0)} \bullet -[]!{+(.8,0)} {{}^tA} -[]!{+(.8,0)} \circ ([]!{+(0,-.3)} {^{-2t-3}}) -[]!{+(.8,0)} {A} -[]!{+(.8,0)} \bullet -[]!{+(.8,0)}{\underline{{A}^*}} -[]!{+(1.25,0)} {^{R (m_{A};t)}}}$
\subsection{Case (II)}\label{9-2}
In this case, there are $17$ cases (11)--(27): 

\noindent
(11) 
$\xygraph{
\bullet - []!{+(.8,-.4)} \circ ([]!{+(-.1,-.3)} {^{-3}}) ( - []!{+(-.8,-.4)} \bullet ,
- []!{+(.8,0)} \circ (- []!{+(0,.8)} \circ ,- []!{+(.8,0)} \circ - []!{+(.8,0)} \circ))}$
\ 
(12)
$\xygraph{
\bullet - []!{+(.8,-.4)} \circ ([]!{+(-.1,-.3)} {^{-3}}) ( - []!{+(-.8,-.4)} \bullet ,
- []!{+(.8,0)} \circ (- []!{+(0,.8)} \circ ,- []!{+(.8,0)} \circ - []!{+(.8,0)} \circ ([]!{+(-.1,-.3)} {^{-3}})))}$
\ 
(13) 
$\xygraph{
\bullet - []!{+(.8,-.4)} \circ ([]!{+(-.1,-.3)} {^{-4}}) ((- []!{+(-.8,0)} \bullet ,- []!{+(-.8,-.4)} \bullet ),
- []!{+(.8,0)} \circ (- []!{+(0,.8)} \circ ,- []!{+(.8,0)} \circ - []!{+(.8,0)} \circ))}$

\noindent
(14)
$\xygraph{
\bullet - []!{+(.8,-.4)} \circ ([]!{+(-.1,-.3)} {^{-5}}) (((- []!{+(-.8,.15)} \bullet,- []!{+(-.8,-.15)} \bullet),- []!{+(-.8,-.4)} \bullet ),
- []!{+(.8,0)} \circ (- []!{+(0,.8)} \circ ,- []!{+(.8,0)} \circ - []!{+(.8,0)} \circ))}$
\qquad
(15) 
$\xygraph{
\circ - []!{+(.8,0)} \bullet - []!{+(.8,-.4)} \circ ([]!{+(-.1,-.3)} {^{-5}}) ( - []!{+(-.8,-.4)} \bullet - []!{+(-.8,0)} \circ ,
- []!{+(.8,0)} \circ (- []!{+(0,.8)} \circ ,- []!{+(.8,0)} \circ - []!{+(.8,0)} \circ))}$

\noindent
(16)
$\xygraph{
\bullet - []!{+(.8,-.4)} \circ ([]!{+(-.1,-.3)} {^{-3}}) ( - []!{+(-.8,-.4)} \bullet ,
- []!{+(.8,0)} \circ - []!{+(.8,0)} \circ (- []!{+(0,.8)} \circ ,- []!{+(.8,0)} \circ - []!{+(.8,0)} \circ))}$
\ 
(17) 
$\xygraph{
\bullet - []!{+(.8,-.4)} \circ ([]!{+(-.1,-.3)} {^{-3}}) ((- []!{+(-.8,0)} \circ ,- []!{+(-.8,-.4)} \bullet ),
- []!{+(.8,0)} \circ ([]!{+(-.1,-.3)} {^{-3}}) (- []!{+(0,.8)} \circ ,- []!{+(.8,0)} \circ - []!{+(.8,0)} \circ))}$
\ 
(18) 
$\xygraph{
\bullet - []!{+(.8,0)} \circ - []!{+(.8,-.4)} \circ (- []!{+(.8,0)} \circ ([]!{+(0,-.3)} {^{-m}}), - []!{+(-.8,-.4)} \circ - []!{+(-.8,0)} \bullet )}$

\noindent
(19) 
$\xygraph{
{^{L (3;t)}} - []!{+(1,0)} \bullet - []!{+(.8,0)} \circ - []!{+(.8,-.4)} \circ ([]!{+(0,-.3)} {^{-(2t+4)}})(- []!{+(.8,0)} \circ ([]!{+(0,-.3)} {^{-m}}),
- []!{+(-.8,-.4)} \circ - []!{+(-.8,0)} \bullet - []!{+(-1,0)} {^{L (3;t)}})}$
\quad
(20) 
$\xygraph{
\bullet - []!{+(.8,-.15)} \circ ([]!{+(0,-.25)} {^{-3}}) ( - []!{+(-.8,-.15)} \bullet, - []!{+(.8,-.3)} \circ (- []!{+(.8,0)} \circ ,
(- []!{+(-.8,-.3)} \circ ([]!{+(0,-.25)} {^{-3}}) (- []!{+(-.8,.15)} \bullet ,- []!{+(-.8,-.15)} \bullet ))))}$
\quad
(21) 
$\xygraph{
\circ - []!{+(.8,0)} \bullet - []!{+(.8,0)} \circ ([]!{+(-.1,-.3)} {^{-3}}) - []!{+(.8,-.4)} \circ (- []!{+(.8,0)} \circ,
- []!{+(-.8,-.4)} \circ ([]!{+(-.1,-.3)} {^{-3}}) - []!{+(-.8,0)} \bullet - []!{+(-.8,0)} \circ )}$

\noindent
(22)\!
$\xygraph{
{^{L (3;t)}} - []!{+(1,0)} \circ - []!{+(.8,0)} \bullet - []!{+(.8,0)} \circ ([]!{+(-.1,-.3)} {^{-3}}) - []!{+(.8,-.4)} \circ ([]!{+(0,-.3)} {^{-(2t+4)}})(- []!{+(.8,0)} \circ,
- []!{+(-.8,-.4)} \circ ([]!{+(-.1,-.3)} {^{-3}}) - []!{+(-.8,0)} \bullet - []!{+(-.8,0)} \circ - []!{+(-1,0)} {^{L (3;t)}})}$
\ 
(23)\!
$\xygraph{
\circ - []!{+(.8,-.15)} \circ  ( - []!{+(-.8,-.15)} \bullet, - []!{+(.8,-.3)} \circ ([]!{+(0,-.3)} {^{-4}}) (- []!{+(.8,0)} \circ ,
(- []!{+(-.8,-.3)} \circ (- []!{+(-.8,.15)} \circ ,- []!{+(-.8,-.15)} \bullet ))))}$
\ 
(24)\!
$\xygraph{
\bullet - []!{+(.8,0)} \circ - []!{+(.8,0)} \circ - []!{+(.8,-.4)} \circ (- []!{+(.8,0)} \circ,
- []!{+(-.8,-.4)} \circ - []!{+(-.8,0)} \circ - []!{+(-.8,0)} \bullet )}$

\noindent
(25) 
$\xygraph{
{^{L (4;t)}} - []!{+(1,0)} \bullet - []!{+(.8,0)} \circ - []!{+(.8,0)} \circ - []!{+(.8,-.4)} \circ ([]!{+(0,-.4)} {^{-(2t+4)}})(- []!{+(.8,0)} \circ,
- []!{+(-.8,-.4)} \circ - []!{+(-.8,0)} \circ - []!{+(-.8,0)} \bullet - []!{+(-1,0)} {^{L (4;t)}})}$
\quad
(26) 
$\xygraph{
\bullet - []!{+(.8,0)} \circ - []!{+(.8,-.6)} \circ (- []!{+(-.8,0)} \circ - []!{+(-.8,0)} \bullet ,- []!{+(-.8,-.6)} \circ - []!{+(-.8,0)} \bullet )}$
\quad
(27) 
$\xygraph{
{^{L (3;t)}} - []!{+(1,0)} \bullet - []!{+(.8,0)} \circ - []!{+(.8,-.6)} \circ ([]!{+(.2,-.3)} {^{-(3t+5)}})
(- []!{+(-.8,0)} \circ - []!{+(-.8,0)} \bullet - []!{+(-1,0)} {^{L (3;t)}},- []!{+(-.8,-.6)} \circ - []!{+(-.8,0)} \bullet - []!{+(-1,0)} {^{L (3;t)}})}$
\subsection{Case (III)}\label{9-3}
In this case, there are $25$ cases (28)--(52): 

\noindent
(28) 
$\xygraph{
\bullet - []!{+(.8,0)} \circ - []!{+(.8,-.6)} \circ ([]!{+(0,-.3)} {^{-3}}) (- []!{+(.8,0)} \circ ,
(- []!{+(-.8,0)} \circ - []!{+(-.8,0)} \bullet ,- []!{+(-.8,-.6)} \circ - []!{+(-.8,0)} \bullet ))}$
\quad
(29) 
$\xygraph{
{^{L (3;t)}} - []!{+(1,0)} \bullet - []!{+(.8,0)} \circ - []!{+(.8,-.6)} \circ ([]!{+(.2,-.3)} {^{-(3t+6)}}) (- []!{+(.8,0)} \circ ,
(- []!{+(-.8,0)} \circ - []!{+(-.8,0)} \bullet - []!{+(-1,0)} {^{L (3;t)}},- []!{+(-.8,-.6)} \circ - []!{+(-.8,0)} \bullet - []!{+(-1,0)} {^{L (3;t)}}))}$
\quad
(30) 
$\xygraph{
\bullet - []!{+(.8,0)} \circ - []!{+(.8,-.4)} \circ ([]!{+(0,-.3)} {^{-3}})
(- []!{+(-.8,-.4)} \circ - []!{+(-.8,0)} \bullet , (- []!{+(.8,-.4)} \circ - []!{+(.8,0)} \bullet,- []!{+(.8,.4)} \circ - []!{+(.8,0)} \bullet))}$

\noindent
(31) 
$\xygraph{
{^{L (3;t)}} - []!{+(1,0)} \bullet - []!{+(.8,0)} \circ - []!{+(.8,-.4)} \circ ([]!{+(0,-.4)} {^{-(4t+7)}}) 
(- []!{+(-.8,-.4)} \circ - []!{+(-.8,0)} \bullet - []!{+(-1,0)} {^{L (3;t)}},
(- []!{+(.8,-.4)} \circ - []!{+(.8,0)} \bullet - []!{+(1,0)} {^{R (3;t)}},(- []!{+(.8,.4)} \circ - []!{+(.8,0)} \bullet - []!{+(1,0)} {^{R (3;t)}}))}$

\noindent
(32) 
$\xygraph{
{^{L (3;t)}} - []!{+(1,0)} \bullet - []!{+(.8,0)} \circ - []!{+(.8,-.4)} \circ ([]!{+(0,-.4)} {^{-(t+3)}}) 
(- []!{+(-.8,-.4)} \circ ,- []!{+(.8,0)} \circ - []!{+(.8,0)} \circ - []!{+(.8,0)} \circ ([]!{+(0,-.4)} {^{-(t+3)}})
(- []!{+(.8,-.4)} \circ,(- []!{+(.8,.4)} \circ - []!{+(.8,0)} \bullet - []!{+(1,0)} {^{R (3;t)}}))}$

\noindent
(33) 
$\xygraph{
\bullet - []!{+(.8,0)} \circ - []!{+(.8,-.4)} \circ 
(- []!{+(-.8,-.4)} \circ ,- []!{+(.8,0)} \circ ([]!{+(0,-.3)} {^{-3}}) - []!{+(.8,0)} \circ 
(- []!{+(.8,-.4)} \circ,(- []!{+(.8,.4)} \circ - []!{+(.8,0)} \bullet))}$

\noindent
(34)
$\xygraph{
{^{L (3;t)}} - []!{+(1,0)} \bullet - []!{+(.8,0)} \circ - []!{+(.8,-.4)} \circ ([]!{+(0,-.3)} {^{-(t+3)}}) 
(- []!{+(-.8,-.4)} \circ ,- []!{+(.8,0)} \circ ([]!{+(0,-.3)} {^{-3}}) - []!{+(.8,0)} \circ ([]!{+(0,-.3)} {^{-(t+3)}})
(- []!{+(.8,-.4)} \circ,(- []!{+(.8,.4)} \circ - []!{+(.8,0)} \bullet - []!{+(1,0)} {^{R (3;t)}}))}$

\noindent
(35) 
$\xygraph{
{^{L (3;t)}} - []!{+(1,0)} \bullet - []!{+(.8,0)} \circ - []!{+(.8,-.4)} \circ ([]!{+(0,-.4)} {^{-(2t+4)}}) 
(- []!{+(-.8,-.4)} \circ - []!{+(-.8,0)} \bullet - []!{+(-1,0)} {^{L (3;t)}},- []!{+(1.2,0)} \circ ([]!{+(0,-.4)} {^{-(2t+4)}})
(- []!{+(.8,-.4)} \circ - []!{+(.8,0)} \bullet - []!{+(1,0)} {^{R (3;t)}},(- []!{+(.8,.4)} \circ - []!{+(.8,0)} \bullet - []!{+(1,0)} {^{R (3;t)}}))}$
\qquad
(36) 
$\xygraph{
\bullet - []!{+(.8,-.4)} \circ ([]!{+(-.1,-.3)} {^{-6}}) ((((- []!{+(-.8,0)} \bullet,- []!{+(-.8,.2)} \bullet,- []!{+(-.8,-.2)} \bullet),- []!{+(-.8,-.4)} \bullet )),
- []!{+(.8,0)} \circ (- []!{+(0,.8)} \circ ,- []!{+(.8,0)} \circ - []!{+(.8,0)} \circ))}$

\noindent
(37)
$\xygraph{
\bullet - []!{+(.8,-.15)} \circ  ([]!{+(-.1,-.25)} {^{-4}}) ((- []!{+(-.8,0)} \bullet, - []!{+(-.8,-.15)} \bullet), - []!{+(.8,-.25)} \circ (- []!{+(.8,0)} \circ,
(- []!{+(-.8,-.25)} \circ  ([]!{+(-.1,-.25)} {^{-4}}) (- []!{+(-.8,0)} \bullet,(- []!{+(-.8,.15)} \bullet ,- []!{+(-.8,-.15)} \bullet )))))}$
\qquad
(38)
$\xygraph{
\circ - []!{+(.8,0)} \circ - []!{+(.8,0)} \bullet - []!{+(.8,0)} \circ ([]!{+(-.1,-.3)} {^{-4}}) - []!{+(.8,-.4)} \circ (- []!{+(.8,0)} \circ ,
- []!{+(-.8,-.4)} \circ ([]!{+(-.1,-.3)} {^{-4}}) - []!{+(-.8,0)} \bullet - []!{+(-.8,0)} \circ - []!{+(-.8,0)} \circ)}$

\noindent
(39)
$\xygraph{
{^{L (3;t)}} - []!{+(1,0)} \circ - []!{+(.8,0)} \circ - []!{+(.8,0)} \bullet - []!{+(.8,0)} \circ ([]!{+(-.1,-.3)} {^{-4}}) - []!{+(.8,-.4)} \circ ([]!{+(0,-.3)} {^{-(2t+4)}})(- []!{+(.8,0)} \circ ,
- []!{+(-.8,-.4)} \circ ([]!{+(-.1,-.3)} {^{-4}}) - []!{+(-.8,0)} \bullet - []!{+(-.8,0)} \circ - []!{+(-.8,0)} \circ - []!{+(-1,0)} {^{L (3;t)}})}$
\qquad
(40) 
$\xygraph{
\circ - []!{+(.8,-.15)} \circ  ( - []!{+(-.8,-.15)} \bullet, - []!{+(.8,-.3)} \circ ([]!{+(0,-.3)} {^{-4}}) (- []!{+(.8,0)} \circ ([]!{+(0,-.3)} {^{-3}}),
(- []!{+(-.8,-.3)} \circ (- []!{+(-.8,.15)} \circ ,- []!{+(-.8,-.15)} \bullet ))))}$

\noindent
(41) 
$\xygraph{
\bullet - []!{+(.8,0)} \circ - []!{+(.8,0)} \circ - []!{+(.8,-.4)} \circ (- []!{+(.8,0)} \circ ([]!{+(-.1,-.3)} {^{-3}}),
- []!{+(-.8,-.4)} \circ - []!{+(-.8,0)} \circ - []!{+(-.8,0)} \bullet)}$
\qquad
(42) 
$\xygraph{
{^{L (4;t)}} - []!{+(1,0)} \bullet - []!{+(.8,0)} \circ - []!{+(.8,0)} \circ - []!{+(.8,-.4)} \circ ([]!{+(0,-.4)} {^{-(2t+4)}})(- []!{+(.8,0)} \circ ([]!{+(-.1,-.3)} {^{-3}}),
- []!{+(-.8,-.4)} \circ - []!{+(-.8,0)} \circ - []!{+(-.8,0)} \bullet - []!{+(-1,0)} {^{L (4;t)}})}$

\noindent
(43) 
$\xygraph{
\circ - []!{+(.8,0)} \circ - []!{+(.8,-.15)} \circ  ( - []!{+(-.8,-.15)} \bullet, - []!{+(.8,-.3)} \circ ([]!{+(0,-.3)} {^{-6}}) (- []!{+(.8,0)} \circ ,
(- []!{+(-.8,-.3)} \circ (- []!{+(-.8,.15)} \circ - []!{+(-.8,0)} \circ ,- []!{+(-.8,-.15)} \bullet ))))}$
\qquad
(44) 
$\xygraph{
{^{L (5;t)}} - []!{+(1,0)} \bullet - []!{+(.8,0)} \circ - []!{+(.8,0)} \circ - []!{+(.8,0)} \circ - []!{+(.8,-.4)} \circ ([]!{+(0,-.4)} {^{-(2t+4)}})(- []!{+(.8,0)} \circ,
- []!{+(-.8,-.4)} \circ - []!{+(-.8,0)} \circ - []!{+(-.8,0)} \circ - []!{+(-.8,0)} \bullet - []!{+(-1,0)} {^{L (5;t)}})}$

\noindent
(45) 
$\xygraph{
\bullet - []!{+(.8,-.15)} \circ  ([]!{+(-.1,-.25)} {^{-3}}) ( - []!{+(-.8,-.15)} \bullet, - []!{+(.8,-.3)} \circ (- []!{+(.8,0)} \circ  ([]!{+(0,-.3)} {^{-3}}),
(- []!{+(-.8,-.3)} \circ  ([]!{+(-.1,-.25)} {^{-3}}) (- []!{+(-.8,.15)} \bullet ,- []!{+(-.8,-.15)} \bullet ))))}$
\quad
(46) 
$\xygraph{
\circ - []!{+(.8,0)} \bullet - []!{+(.8,0)} \circ ([]!{+(-.1,-.3)} {^{-3}}) - []!{+(.8,-.4)} \circ (- []!{+(.8,0)} \circ ([]!{+(-.1,-.3)} {^{-3}}),
- []!{+(-.8,-.4)} \circ ([]!{+(-.1,-.3)} {^{-3}}) - []!{+(-.8,0)} \bullet - []!{+(-.8,0)} \circ)}$

\noindent
(47) 
$\xygraph{
{^{L (3;t)}} - []!{+(1,0)} \circ - []!{+(.8,0)} \bullet - []!{+(.8,0)} \circ ([]!{+(-.1,-.3)} {^{-3}}) - []!{+(.8,-.4)} \circ ([]!{+(0,-.3)} {^{-(2t+4)}})(- []!{+(.8,0)} \circ ([]!{+(0,-.3)} {^{-3}}),
- []!{+(-.8,-.4)} \circ ([]!{+(-.1,-.3)} {^{-3}}) - []!{+(-.8,0)} \bullet - []!{+(-.8,0)} \circ - []!{+(-1,0)} {^{L (3;t)}})}$
\quad
(48) 
$\xygraph{
\bullet - []!{+(.8,-.15)} \circ ([]!{+(-.1,-.3)} {^{-3}}) ( - []!{+(-.8,-.15)} \bullet, - []!{+(.8,-.6)} \circ
(- []!{+(-.8,0)} \circ ([]!{+(-.1,-.3)} {^{-3}}) (- []!{+(-.8,.15)} \bullet,- []!{+(-.8,-.15)} \bullet ),- []!{+(-.8,-.6)} \circ ([]!{+(-.1,-.3)} {^{-3}}) (- []!{+(-.8,.15)} \bullet,- []!{+(-.8,-.15)} \bullet )))}$
\quad
(49) 
$\xygraph{
\circ - []!{+(.8,0)} \bullet - []!{+(.8,0)} \circ ([]!{+(-.1,-.3)} {^{-3}}) - []!{+(.8,-.6)} \circ
(- []!{+(-.8,0)} \circ ([]!{+(-.1,-.3)} {^{-3}}) - []!{+(-.8,0)} \bullet - []!{+(-.8,0)} \circ,- []!{+(-.8,-.6)} \circ ([]!{+(-.1,-.3)} {^{-3}}) - []!{+(-.8,0)} \bullet - []!{+(-.8,0)} \circ)}$

\noindent
(50)
$\xygraph{
{^{L (3;t)}} - []!{+(1,0)} \circ - []!{+(.8,0)} \bullet - []!{+(.8,0)} \circ ([]!{+(-.1,-.3)} {^{-3}}) - []!{+(.8,-.6)} \circ ([]!{+(.2,-.3)} {^{-(3t+5)}})
(- []!{+(-.8,0)} \circ ([]!{+(-.1,-.3)} {^{-3}}) - []!{+(-.8,0)} \bullet - []!{+(-.8,0)} \circ - []!{+(-1,0)} {^{L (3;t)}},- []!{+(-.8,-.6)} \circ ([]!{+(-.1,-.3)} {^{-3}}) - []!{+(-.8,0)} \bullet - []!{+(-.8,0)} \circ - []!{+(-1,0)} {^{L (3;t)}})}$
\ 
(51)
$\xygraph{
\circ - []!{+(.8,-.15)} \circ ( - []!{+(-.8,-.15)} \bullet, - []!{+(.8,-.6)} \circ ([]!{+(-.1,-.3)} {^{-5}})
(- []!{+(-.8,0)} \circ (- []!{+(-.8,.15)} \circ,- []!{+(-.8,-.15)} \bullet ),- []!{+(-.8,-.6)} \circ (- []!{+(-.8,.15)} \circ,- []!{+(-.8,-.15)} \bullet )))}$
\ 
(52) 
$\xygraph{
{^{L (4;t)}} - []!{+(1,0)} \bullet - []!{+(.8,0)} \circ - []!{+(.8,0)} \circ - []!{+(.8,-.6)} \circ ([]!{+(.2,-.3)} {^{-(3t+5)}})
(- []!{+(-.8,0)} \circ - []!{+(-.8,0)} \circ - []!{+(-.8,0)} \bullet - []!{+(-1,0)} {^{L (4;t)}},- []!{+(-.8,-.6)} \circ - []!{+(-.8,0)} \circ - []!{+(-.8,0)} \bullet - []!{+(-1,0)} {^{L (4;t)}})}$

\end{document}